\documentclass[11pt]{article}    
\usepackage[margin=1in]{geometry}
\usepackage{graphicx}
\usepackage{amsmath,amsfonts,amsthm,amssymb}
\usepackage{mathrsfs}
\usepackage{hyperref}
\usepackage{caption}
\usepackage{subcaption}
\usepackage{comment}
\usepackage[shortlabels]{enumitem}

\numberwithin{equation}{section}
\newtheorem{thm}{Theorem}
\newtheorem{lem}{Lemma}[section]
\newtheorem{cor}[lem]{Corollary}
\newtheorem{prop}{Proposition}
\theoremstyle{definition}
\newtheorem{defn}{Definition}
\theoremstyle{remark}
\newtheorem{rem}{Remark}[section]

\newcommand{\pare}[1]{\left(#1\right)}
\newcommand{\brak}[1]{\left[#1\right]}

\newcommand{\Pare}[1]{\big(#1\big)}

\newcommand{\Brak}[1]{\big[#1\big]}

\newcommand{\abs}[1]{\left\lvert #1 \right\rvert}
\newcommand{\Abs}[1]{\big\lvert #1 \big\rvert}

\newcommand{\dd}[2]{\frac{\partial #1}{\partial #2}}

\newcommand{\AnD}{\quad\text{and}\quad}
\newcommand{\AND}{\qquad\text{and}\qquad}

\newcommand{\RR}{\mathbb{R}}
\newcommand{\Ss}{\mathbb{S}}

\newcommand{\NN}{\mathbb{N}}
\newcommand{\ZZ}{\mathbb{Z}}

\newcommand{\im}{\mathrm{i}}
\DeclareMathOperator{\sgn}{sgn}
\DeclareMathOperator{\Tr}{Tr}

\newcommand{\He}{\mathbf{H}}
\newcommand{\vT}{\theta}
\newcommand{\cS}{\mathcal{C}}
\newcommand{\rn}{\rho_n}

\newcommand{\Int}{\mathcal{I}}
\newcommand{\Cp}{\mathcal{T}}
\newcommand{\CP}{\widetilde{\Cp}}
\newcommand{\N}{\mathcal{N}}
\newcommand{\Gp}{\mathcal{G}}

\usepackage{xcolor}

\begin{document}
\title{Finiteness Results for Non-Scattering Herglotz Waves
	\\ \large The case of inhomogeneities obtained by very general perturbations of disks.}
\author{Michael S. Vogelius \footnote{Department of Mathematics, Rutgers University, New Brunswick and Department of Mathematics, Aarhus University} and Jingni Xiao \footnote{Department of Mathematics, Drexel University, Philadelphia}}
\date{}
\maketitle

\begin{abstract}
	We study non-scattering phenomena associated with the time-harmonic Helmholtz equation in two dimensions. For very general classes of star-shaped domains, we show that there are at most finitely many wave numbers such that Herglotz incident waves with a fixed density function are non-scattering.
	\\\textbf{Keywords.} Non-scattering, Herglotz waves, transmission eigenvalues, star-shaped domains, Schiffer conjecture.
\end{abstract}
\section{Introduction}
In this paper we study the {\it geometric} implications of (a high degree of) non-scattering in the context of the two dimensional Helmholtz equation. The focus is on smooth inhomogeneities, since it is already known that non-scattering, at just a single wave number and for a single incident wave, generically implies that the inhomogeneity (scatterer) is smooth \cite{CakVog23,SaShah,CaVoX23}. The present study may be seen as a continuation of the investigation initiated in \cite{VogX20}.
We consider the two dimensional Helmholtz equation for a wave number, $k>0$, and
an incident wave in the form of a superposition of plane waves, a so-called Herglotz wave:
\begin{equation}
	\label{herglotz}
	\He [k, \phi](x)=\int_{-\pi}^{\pi}\phi(\xi) e^{\im k\xi\cdot x}~d\theta_{\xi}~.
\end{equation}
The function $\phi\in L^2(\Ss^1)$ is the associated  Herglotz density. We note that $\He [k, \phi](x)$ solves
$\Delta \He [k, \phi] + k^2 \He [k, \phi]=0$ in all of  $\RR^2$. The bounded inhomogeneity $\Omega \subset \RR^2$, with index of refraction $q\in L^{\infty}(\Omega)$ (not identically $1$), is non-scattering  in the presence of this incident wave (at wave number $k$), if and only if the total field outside $\Omega$ coincides with the incident field. This happens if and only if there exists a solution to the following over-determined problem
\begin{equation}\label{eq:OverDet}
	\begin{cases}
		\Delta u + k^2 q u =0
		&\qquad
		\mbox{in $\Omega$},
		\\
		u-\He [k, \phi]=0,\quad \partial_{\nu}u-\partial_{\nu}\He [k, \phi]=0,
		&\qquad
		\mbox{on $\partial\Omega$}.
	\end{cases}
\end{equation}
The function $u$ represents the transmitted wave inside $\Omega$. In terms of the transmission eigenvalue problem
\begin{equation}\label{eq:ITEP}
	\begin{cases}
		\Delta u + \lambda q u =0,\quad
		\Delta v + \lambda v =0,
		&\qquad
		\mbox{in $\Omega$},
		\\
		u-v=0,\quad \partial_{\nu}u-\partial_{\nu}v=0,
		&\qquad
		\mbox{on $\partial\Omega$},
	\end{cases}
\end{equation}
this is equivalent to the fact that $\lambda = k^2$ is a transmission eigenvalue with eigenvector component $v$ given by $\He [k, \phi]$.
Hence the (squares of the) non-scattering wave numbers form a subset of the transmission eigenvalues. Regarding the latter it is known that, under minimal sign assumptions on $q-1$, there exists a countably infinite set of positive real eigenvalues for the problem \eqref{eq:ITEP}; See, \cite{CGH10,CCH16book}.

Throughout this paper we assume that {\bf$q$ is a  positive constant, different from $1$}. For disks centered at the origin we recall the following result concerning the existence of  infinite sequences of non-scattering (Herglotz) wave numbers, see \cite{ColKre19,CoM88,VogX20}.
\begin{prop}
	Let $\Omega$ be a disk of radius $R_0>0$ centered at the origin, and let $n\in\NN$ be any fixed integer. Then there exists an infinite sequence of positive wave numbers $\{k_j^{(n)}\}_{j=1}^{\infty}$ with $k_j^{(n)} \rightarrow \infty$ as $j \rightarrow \infty$ such that $\Omega$ is non-scattering for the incident Herglotz waves $\mathbf{H}[k_j^{(n)}, e^{-i n \theta}], j=1,2, \ldots$.
\end{prop}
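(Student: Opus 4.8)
The plan is to use separation of variables, which is available since $\Omega$ is a disk centered at the origin and $q$ is constant. First I would record that the relevant Herglotz wave is a single Fourier mode: by the Jacobi--Anger expansion, $\He[k, e^{-\im n\theta}](x) = 2\pi\,\im^{n}\, J_n(k r)\, e^{-\im n\theta}$ for $x = r(\cos\theta, \sin\theta)$, where $J_n$ is the Bessel function of the first kind. Looking for a transmitted field of the matching form $u(x) = c\, J_n(\sqrt q\, k r)\, e^{-\im n\theta}$ --- which automatically solves $\Delta u + k^2 q\, u = 0$ and is regular at the origin --- the two transmission conditions on $\partial\Omega = \{ r = R_0\}$ collapse, after dividing out $e^{-\im n\theta}$, to the system $c\, J_n(\sqrt q\, t) = 2\pi\im^{n} J_n(t)$ and $c\sqrt q\, J_n'(\sqrt q\, t) = 2\pi\im^{n} J_n'(t)$ in the single unknown $c$, where $t := k R_0$. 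Eliminating $c$, this system is solvable --- equivalently, $\Omega$ is non-scattering for $\He[k, e^{-\im n\theta}]$ --- precisely when the Bessel cross-product
\[
 d(t) \ :=\ J_n(\sqrt q\, t)\, J_n'(t)\ -\ \sqrt q\, J_n'(\sqrt q\, t)\, J_n(t)
\]
vanishes; the degenerate possibility $J_n(\sqrt q\, t) = 0$ is dispatched using the simplicity of the zeros of $J_n$. Everything then reduces to producing a sequence $t_j \to \infty$ of positive zeros of $d$ and setting $k_j^{(n)} := t_j/R_0$.

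To find infinitely many zeros of $d$, I would insert the large-argument asymptotics $J_n(z) = \sqrt{2/(\pi z)}\,\big(\cos(z - \alpha_n) + O(z^{-1})\big)$ and $J_n'(z) = \sqrt{2/(\pi z)}\,\big(-\sin(z-\alpha_n) + O(z^{-1})\big)$, with $\alpha_n = (2n+1)\pi/4$, and simplify using a product-to-sum identity. This yields $d(t) = \tfrac{2}{\pi t}\big(g(t) + O(t^{-1})\big)$ with
\[
 g(t)\ =\ \frac{q^{1/4}+q^{-1/4}}{2}\,\sin\!\big((\sqrt q - 1)t\big)\ +\ \frac{q^{1/4}-q^{-1/4}}{2}\,\sin\!\big((\sqrt q + 1)t - 2\alpha_n\big).
\]
The decisive point is that, because $q \ne 1$, the amplitude $a := \tfrac12(q^{1/4}+q^{-1/4})$ of the first term strictly dominates the amplitude $|b| := \tfrac12\,\big|q^{1/4}-q^{-1/4}\big|$ of the second, with gap $a - |b| = \min(q^{1/4}, q^{-1/4}) =: \delta_0 > 0$. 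Hence at the points $t_m$ --- forming an arithmetic progression tending to $+\infty$, since $\sqrt q \ne 1$ --- where $\sin\big((\sqrt q - 1)t_m\big) = \pm 1$, the sign of $g(t_m)$ is forced, alternates with $m$, and satisfies $|g(t_m)| \ge \delta_0$. Since the remainder is $o(1)$, for all sufficiently large $m$ the sign of $d(t_m)$ coincides with that of $g(t_m)$ and thus still alternates, so by the intermediate value theorem $d$ has a zero in each interval $(t_m, t_{m+1})$ with $m$ large. These zeros are distinct and diverge to $+\infty$, which completes the construction.

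The Jacobi--Anger computation, the reduction of \eqref{eq:OverDet} to $d(t) = 0$, and the product-to-sum bookkeeping are all routine. The step I expect to be the real obstacle is the last one --- turning the oscillation of the leading term $g$ into genuinely infinitely many sign changes of $d$ itself. Making this rigorous requires a quantitative, uniform bound on the $O(t^{-1})$ remainder in the Bessel asymptotics, measured against the fixed amplitude gap $\delta_0$; and it is exactly here that the hypothesis $q \ne 1$ is indispensable (for $q = 1$ one has $d \equiv 0$, consistent with there being no inhomogeneity at all).
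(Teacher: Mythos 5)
Your argument is correct, and it is essentially the standard separation-of-variables proof: the paper itself does not prove this proposition but cites \cite{ColKre19,CoM88,VogX20}, where the result is obtained in just this way, by reducing non-scattering for the single Fourier mode $e^{-\im n\theta}$ to the vanishing of the Bessel cross-product $J_n(\sqrt q\,t)J_n'(t)-\sqrt q\,J_n'(\sqrt q\,t)J_n(t)$ and extracting infinitely many sign changes from the large-argument asymptotics (the amplitude gap $\min(q^{1/4},q^{-1/4})>0$, available precisely because $q\neq1$). The quantitative control of the $O(t^{-1})$ remainder that you flag is supplied by the classical uniform Bessel asymptotics for fixed order, so no genuine gap remains.
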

For disks it is actually easy to see,  that the set of transmission eigenvalues coincides with the (squares of the) non-scattering wave numbers.

In this paper, we consider domains $\Omega$ that are not disks centered at the origin. We prove that, although there is a countably infinite set of positive real transmission eigenvalues for the problem \eqref{eq:ITEP}, there are at most finitely many non-scattering wave numbers corresponding to the Herglotz incident waves associated with a fixed density $\phi$.

Finally we notice that $\tilde{u}:=u-\He [k, \phi]$ satisfies
	\begin{equation}\label{eq:schiffer}
		\begin{cases}
			\Delta \tilde{u} + k^2 q \tilde{u} =f
			&\qquad
			\mbox{in $\Omega$},
			\\
			\tilde{u}=0,\quad \partial_{\nu}\tilde{u}=0,
			&\qquad
			\mbox{on $\partial\Omega$},
		\end{cases}
	\end{equation}
	with $f=-k^2(q-1)\He [k, \phi]$. The over-determined problem \eqref{eq:schiffer} resembles the so-called Schiffer problem. 
	The Schiffer conjecture states that, given $\Omega$ a simply connected domain in $\RR^m$, $m=2,3$, \eqref{eq:schiffer} admits a solution with $f\equiv 1$ for some $k\in \RR$ if and only if $\Omega$ is an $m$-dimensional ball. The proof of this is still an open problem, with some partial results, including that \eqref{eq:schiffer} admits a solution with $f\equiv 1$ for infinitely many $k\in \RR$ if and only if $\Omega$ is an $m$-dimensional ball \cite{BerYan87,Vog94}.
In this paper we show (for $m=2$) that for broad classes of non-circular domains, $\Omega$, and for a fixed nontrivial Herglotz density, $\phi$, there exist at most finitely many wave numbers $k$, for which $\Omega$ is non-scattering given the incident wave $\He [k, \phi]$. Precise statements of our results are given in the next section.

\subsection{Main Results}
One of our results concerns ellipses centered at the origin. We recall that the eccentricity of an ellipse is given by the formula $\sqrt{1-\frac{b^2}{a^2}}$, where $a$ is the length of its semi-major axis, and $b$ is the length of its semi-minor axis.
\begin{thm}\label{thm:ellipse}
	Let $\Omega$ be an \emph{ellipse} centered at the origin.
	Suppose that the eccentricity $e$ of $\Omega$ satisfies
	\begin{equation*}
		0<e^2<\sqrt{q}/(1+\sqrt{q}).
	\end{equation*}
	Then, given a $C
	^1$ function $\phi$ on $\Ss^1$, which is not identically zero,
	there are at most finitely many $k$'s such that $\Omega$ is non-scattering in the presence of the incident wave $v=\He[k,\phi]$. \footnote{We identify $\Ss^1$ with the interval $[0,2\pi)$ (and with the quotient space $\RR/(2\pi\ZZ)$) by the map $\theta\mapsto (\cos\theta,\sin\theta)^T$, unless otherwise specified.}
\end{thm}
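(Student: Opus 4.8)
The plan is to convert non-scattering at a wave number $k$ into an explicit integral identity in $k$, and then to extract finiteness from two properties of that identity: it is (real-)analytic in $k$, and at high frequency it has a non-degenerate asymptotic expansion whose leading coefficient cannot vanish. First the reduction. If $\Omega$ is non-scattering at $k$, write $v=\He[k,\phi]$ and let $u$ solve \eqref{eq:OverDet}; then $\tilde u=u-v$ has vanishing Cauchy data on $\partial\Omega$ and solves \eqref{eq:schiffer} with $f=-k^2(q-1)v$. For any $\psi$ with $\Delta\psi+qk^2\psi=0$ near $\overline\Omega$, Green's identity gives (the boundary terms drop because $\tilde u=\partial_\nu\tilde u=0$ on $\partial\Omega$, and $k>0$, $q\neq1$)
\[
\int_\Omega v\,\psi\,dx=0 .
\]
Choosing $\psi(x)=e^{\im\sqrt q\,k\,\xi\cdot x}$ with $\xi\in\Ss^1$ and substituting \eqref{herglotz} for $v$ yields, for \emph{every} non-scattering $k$,
\[
\int_{\Ss^1}\phi(\eta)\,\widehat{\mathbf 1_\Omega}\!\big(k(\eta+\sqrt q\,\xi)\big)\,d\eta=0\qquad\text{for all }\xi\in\Ss^1 ,
\qquad \widehat{\mathbf 1_\Omega}(p):=\int_\Omega e^{\im p\cdot x}\,dx .
\]

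Next I would make this explicit for the ellipse. A linear change of variables gives $\widehat{\mathbf 1_\Omega}(p)=2\pi ab\,J_1(|Mp|)/|Mp|$ with $M=\operatorname{diag}(a,b)$, so, writing $\eta,\xi$ via angles $\alpha,\beta$, the identity becomes $J(k,\beta)=0$ for all $\beta$, where
\[
J(k,\beta):=\int_{-\pi}^{\pi}\phi(\alpha)\,\frac{J_1\!\big(k\,\ell(\alpha,\beta)\big)}{k\,\ell(\alpha,\beta)}\,d\alpha,\qquad
\ell(\alpha,\beta)^2:=a^2(\cos\alpha+\sqrt q\cos\beta)^2+b^2(\sin\alpha+\sqrt q\sin\beta)^2 .
\]
Since $q\neq1$ one has $\ell\ge b|1-\sqrt q|>0$ uniformly, so $J(\cdot,\beta)$ is entire for each $\beta$ and $J$ is jointly real-analytic. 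It is illuminating to note that for a disk $a=b$ one has $\ell(\alpha,\beta)=a|\eta+\sqrt q\xi|$, a function of $\alpha-\beta$ only, so $J(k,\cdot)=\phi*g_k$ for an explicit kernel $g_k$, and $J(k,\cdot)\equiv0$ reduces to $\widehat\phi(n)\widehat{g_k}(n)=0$, which admits infinitely many solutions $k$ — in agreement with Proposition~1. The eccentricity hypothesis is exactly what destroys this convolution degeneracy.

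Then the finiteness argument. Matching powers of $k$ in the Taylor expansion of $J$ at $k=0$ (using $J_1(z)/z=\tfrac12-z^2/16+\cdots$ and $a^2b^2\neq0$) shows $J\not\equiv0$ whenever $\phi\not\equiv0$, so $Z:=\{k>0: J(k,\beta)=0\ \forall\beta\}$ — which contains all non-scattering wave numbers — is discrete (alternatively, invoke the discreteness of the transmission eigenvalues recalled in the Introduction). It remains to rule out a sequence $k_j\in Z$ with $k_j\to\infty$. Insert $J_1(z)/z\sim\sqrt{2/\pi}\,z^{-3/2}\cos(z-3\pi/4)$ and apply stationary phase in $\alpha$: the critical points of $\alpha\mapsto\ell(\alpha,\beta)$ solve $a^2\sin\alpha\,(\cos\alpha+\sqrt q\cos\beta)=b^2\cos\alpha\,(\sin\alpha+\sqrt q\sin\beta)$. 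A direct check (the binding case being $\beta=\pm\pi/2$, where this reduces to $\cos\alpha\big[(a^2-b^2)\sin\alpha-\sqrt q\,b^2\big]=0$) shows that $0<e^2<\sqrt q/(1+\sqrt q)$ is precisely the condition under which, for every $\beta$, there are exactly two critical points $\alpha_1(\beta),\alpha_2(\beta)$, both non-degenerate; they depend smoothly on $\beta$ and each sweeps out all of $\Ss^1$ as $\beta$ varies. Hence
\[
J(k,\beta)=k^{-2}\sum_{m=1}^{2}\mathcal A_m(\beta)\cos\!\big(k\,\ell_m(\beta)+\gamma_m(\beta)\big)+o(k^{-2}),
\]
with $\ell_m(\beta):=\ell(\alpha_m(\beta),\beta)$ and each $\mathcal A_m(\beta)$ a nonzero multiple of $\phi(\alpha_m(\beta))$; moreover the phases $\ell_1,\ell_2,\ell_1\pm\ell_2$ are non-constant in $\beta$ (at $\beta=0$ and $\beta=\pi/2$ one gets the values $a(1\pm\sqrt q)$ and $b(1\pm\sqrt q)$, which differ since $a\neq b$). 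Squaring, integrating in $\beta$, and killing all oscillatory terms by Riemann–Lebesgue gives
\[
\int_{\Ss^1}|J(k,\beta)|^2\,d\beta=k^{-4}\Big(\tfrac12\textstyle\sum_{m=1}^{2}\int_{\Ss^1}\mathcal A_m(\beta)^2\,d\beta+o(1)\Big),
\]
and the bracket is strictly positive because $\phi\not\equiv0$ (each $\alpha_m(\cdot)$ is onto $\Ss^1$). But $k_j\in Z$ forces the left side to vanish at $k=k_j$, a contradiction. Hence $Z$, and with it the set of non-scattering wave numbers, is finite.

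The main obstacle is the last step, and specifically the verification that the sharp bound $e^2<\sqrt q/(1+\sqrt q)$ delivers, \emph{uniformly in} $\beta$, exactly two non-degenerate critical points of $\ell(\cdot,\beta)$ — hence a clean two–branch stationary–phase expansion with smooth, genuinely $\beta$-dependent phases — together with a stationary–phase remainder that is $o(k^{-2})$ uniformly in $\beta$, which is where the $C^1$ hypothesis on $\phi$ is needed. Once the expansion is in hand the $L^2$-in-$\beta$ estimate and the contradiction are routine; the careful bookkeeping of the critical branches, the proof that the relevant phase combinations are non-constant, and the uniformity of the error are the technical core. (The same scheme should carry over to the more general perturbed-disk domains, with the role of the eccentricity bound played by an analogous condition relating $q$ to the boundary perturbation; there the non-degeneracy of the analogue of $\ell$ is the correspondingly harder point.)
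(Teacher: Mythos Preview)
Your approach is correct and genuinely different from the paper's. Both start from the same orthogonality identity, and both use the eccentricity bound to secure exactly two non-degenerate stationary branches --- your $\alpha_1(\beta),\alpha_2(\beta)$ coincide with the paper's $\mathcal T_1\theta_\eta,\mathcal T_2\theta_\eta$, and the ``binding case'' you flag is precisely the content of Lemma~\ref{lem:4critEll}. From there the arguments diverge. The paper passes to a boundary integral, applies two-dimensional stationary phase (four terms), differentiates repeatedly in $\theta_\eta$ to produce a $4\times4$ Vandermonde system, extracts the pointwise relation $|\phi(\mathcal T_2\theta_\eta)|^2=\mathcal G(\theta_\eta)\,|\phi(\mathcal T_1\theta_\eta)|^2$ with $\mathcal G<1$ near the axes, and then runs a fairly delicate iteration using the monotonicity and mapping properties of $\mathcal T_1,\mathcal T_2$ (Lemma~\ref{lem:critEll}, Corollary~\ref{cor:4critEll}) to propagate $\phi=0$ around the circle. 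You instead exploit that an ellipse is a linear image of the disk, evaluate the interior integral in closed form as $2\pi ab\,J_1(|Mp|)/|Mp|$, reduce to a one-dimensional oscillatory integral, insert the Bessel asymptotic, and after stationary phase obtain a clean two-term cosine expansion; in place of the Vandermonde/iteration machinery you average $|J(k,\beta)|^2$ over $\beta$, kill the cross terms by oscillation of the non-constant real-analytic phases $\ell_1,\ell_2,\ell_1\pm\ell_2$, and use the surjectivity of $\alpha_m$ (forced by $\alpha_m(\beta+\pi)=\alpha_m(\beta)+\pi$) to see the diagonal contribution is strictly positive. Your route is shorter and sidesteps the dynamical-systems-flavored endgame entirely, but it is tied to the closed-form $\widehat{\mathbf 1_\Omega}$ available only for linear images of the disk; the paper's heavier framework, by contrast, transports almost verbatim to the general star-shaped domains of Section~\ref{sec:starshape}, where no such formula exists. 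Two small points on your write-up: the vanishing of the oscillatory $\beta$-integrals is not bare Riemann--Lebesgue but a van~der~Corput/stationary-phase statement (the phases are non-constant real-analytic, hence have finitely many critical points in $\beta$); and your $\mathcal A_m^2$ should be $|\mathcal A_m|^2$ since $\phi$ is in general complex-valued.
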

This result, which was already announced in \cite{VogX20}, significantly extends the result of  Theorem 1 in that paper.
Another result concerns disks that are not centered at the origin.
\begin{thm}\label{thm:disk}
	Let $\Omega=B_{R_0}(x_0)$ be a \emph{disk} of radius $R_0$ centered at the point $x_0 \ne 0$.
	Suppose that
	\begin{equation*}
		|x_0|/R_0<\begin{cases}\sqrt{q}/(1+\sqrt{q})&\hbox{ for } \sqrt{q}<1 ~\text{or}~ 1< \sqrt{q}\le \frac1{\sqrt{3}-1}~,\\
		\sqrt{1-2(1+\sqrt{q})/(3\sqrt{3q} )}~~~\left(< \sqrt{q}/(1+\sqrt{q})\right) &\hbox{ for } \sqrt{q}> \frac1{\sqrt{3}-1}~.\end{cases}
	\end{equation*}
	Then, given a $C^1$ function $\phi$ on $\Ss^1$,  which is not identically zero,
	there are at most finitely many $k$'s such that $\Omega$ is non-scattering in the presence of the incident wave $v=\He[k,\phi]$.
\end{thm}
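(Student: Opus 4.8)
My plan is to exploit the circular symmetry of $\partial\Omega$. Although $\Omega=B_{R_0}(x_0)$ is not centered at the origin, its boundary is a circle, so the transmission conditions in \eqref{eq:OverDet} decouple mode-by-mode once everything is expanded about the \emph{center} $x_0$. After a rotation I may assume $x_0=(\rho_0,0)$ with $\rho_0=|x_0|>0$. Writing $x=x_0+r'(\cos\theta',\sin\theta')$ and using the Jacobi--Anger expansion, the incident wave expands as
\[
  \He[k,\phi](x)=\sum_{n\in\ZZ}\tilde\gamma_n(k)\,J_n(kr')\,e^{\im n\theta'},\qquad
  \tilde\gamma_n(k)=2\pi\,\im^{\,n}\,\widehat{\Psi_k}(n),\quad \Psi_k(\xi):=\phi(\xi)\,e^{\im k\,\xi\cdot x_0},
\]
where $\widehat{\Psi_k}(n)$ is the $n$-th Fourier coefficient of $\Psi_k\in C^1(\Ss^1)$. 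Since $\phi\in C^1$ we have $\widehat{\Psi_k}(n)=O(1/n)$, and this series, together with its radial derivative, converges on $\overline\Omega$. The transmitted wave $u$, a regular solution of $\Delta u+qk^2u=0$ on the disk $\Omega$, has a Fourier--Bessel expansion about $x_0$, namely $u(x)=\sum_n\delta_n\,J_n(\sqrt q\,kr')\,e^{\im n\theta'}$.

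Matching $u,\partial_{r'}u$ with $\He[k,\phi],\partial_{r'}\He[k,\phi]$ on $r'=R_0$ gives, for every $n$, the $2\times2$ system $\delta_nJ_n(\sqrt q\,kR_0)=\tilde\gamma_n(k)J_n(kR_0)$, $\delta_n\sqrt q\,J_n'(\sqrt q\,kR_0)=\tilde\gamma_n(k)J_n'(kR_0)$, which has a solution $\delta_n$ if and only if $\tilde\gamma_n(k)\,\mathcal D_n(k)=0$, where
\[
  \mathcal D_n(k):=\sqrt q\,J_n'(\sqrt q\,kR_0)J_n(kR_0)-J_n'(kR_0)J_n(\sqrt q\,kR_0)
   = J_{n+1}(kR_0)J_n(\sqrt q\,kR_0)-\sqrt q\,J_{n+1}(\sqrt q\,kR_0)J_n(kR_0).
\]
(Up to a nonzero factor, $\mathcal D_n(k)$ is the Lommel integral $\int_0^{R_0}J_n(kr')J_n(\sqrt q\,kr')\,r'\,dr'$; its zeros in $k>0$ are precisely the transmission eigenvalues associated with the angular mode $e^{\im n\theta'}$ for $B_{R_0}$.) Hence $\Omega$ is non-scattering for $\He[k,\phi]$ if and only if $\tilde\gamma_n(k)\,\mathcal D_n(k)=0$ for every $n\in\ZZ$.

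The analytic heart of the argument is that, for each fixed $k>0$, one has $\mathcal D_n(k)\neq0$ for all sufficiently large $|n|$ (indeed for $|n|\gtrsim kR_0$): from the asymptotics $J_\nu(z)=(z/2)^\nu\,\Gamma(\nu+1)^{-1}\bigl(1+O_z(1/\nu)\bigr)$ as $\nu\to+\infty$, the ratio of the two Bessel products defining $\mathcal D_n(k)$ tends to $q^{-1}\neq1$, so they cannot cancel once $|n|$ is large. Granting this, non-scattering at $k$ forces $\tilde\gamma_n(k)=0$, i.e.\ $\widehat{\Psi_k}(n)=0$, for all large $|n|$; that is, $\Psi_k=\phi\,e^{\im k\,\cdot\,x_0}$ is a trigonometric polynomial on $\Ss^1$. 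But this can hold for at most one value of $k$: if $\Psi_{k_1}$ and $\Psi_{k_2}$ were both trigonometric polynomials with $k_1\neq k_2$, then with $w=e^{\im\theta_\xi}$ one has $\xi\cdot x_0=\tfrac{\rho_0}{2}(w+w^{-1})$, so $\Psi_{k_1}/\Psi_{k_2}=\exp\!\bigl(\tfrac{\im\rho_0(k_1-k_2)}{2}(w+w^{-1})\bigr)$ would be a rational function of $w$ (a ratio of Laurent polynomials); but the right-hand side, being the Bessel generating function with the nonzero argument $\rho_0(k_1-k_2)$, has essential singularities at $w=0$ and $w=\infty$ --- a contradiction. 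Hence there is at most one non-scattering wave number, and in particular at most finitely many.

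Two points require care in this scheme: justifying the term-by-term boundary matching (routine here, since $\phi\in C^1$ makes the relevant Fourier--Bessel series converge up to $\partial\Omega$, and in the non-scattering case only finitely many modes $\delta_n$ are nonzero) and the $|n|\to\infty$ Bessel estimate. The quantitative restriction $\rho_0/R_0<\sqrt q/(1+\sqrt q)$ is not needed for this disk-specific argument; it is the natural smallness condition in the unified treatment of ellipses and general star-shaped perturbations of disks, where one must expand about a fixed ``star center'' rather than the geometric center, $\partial\Omega$ is no longer a coordinate circle, and the mode decoupling above has to be recovered perturbatively. Pushing that perturbative analysis through --- in particular extracting finiteness from the large-$k$ asymptotics of the perturbed transmission determinant --- is the principal obstacle in the general case.
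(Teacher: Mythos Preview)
Your argument is correct and takes a genuinely different route from the paper. The paper views $B_{R_0}(x_0)$ as a star-shaped domain with respect to the \emph{origin}, computes the radius function $\rho(\theta)=|x_0|\cos(\theta-\theta_{x_0})+\sqrt{R_0^2-|x_0|^2\sin^2(\theta-\theta_{x_0})}$, verifies that the smallness hypothesis on $|x_0|/R_0$ is exactly what is needed for the admissibility condition $(\ln\rho)''<\sqrt{q}/(1+\sqrt{q})$, and then invokes the general star-shaped result (Theorem~\ref{thm:main1Star}). Your approach instead expands about the \emph{geometric center} $x_0$, where $\partial\Omega$ is a coordinate circle, so that the over-determined transmission problem decouples into the scalar conditions $\tilde\gamma_n(k)\,\mathcal D_n(k)=0$; the large-$|n|$ Bessel asymptotics then force $\Psi_k=\phi\,e^{ik\,\cdot\,x_0}$ to be a trigonometric polynomial, and the essential-singularity argument shows this can happen for at most one value of $k$.

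Your approach buys substantially more in the disk case: it yields \emph{at most one} non-scattering wave number rather than finitely many, it requires no restriction whatsoever on $|x_0|/R_0$ (only $x_0\neq0$), and it treats $0<q<1$ and $q>1$ simultaneously. The paper's approach, by contrast, buys uniformity: the same stationary-phase machinery, and in particular the same admissibility condition \eqref{eq:StarBigCond1}, handles disks, off-center ellipses, and the general star-shaped perturbations of Theorems~\ref{thm:Star1}--\ref{thm:StarAnly}; the bound on $|x_0|/R_0$ is the price of forcing the disk into that common framework. One small remark on your write-up: the justification that the Fourier--Bessel expansion of $u$ may be matched term-by-term at $r'=R_0$ should not invoke the a posteriori fact that only finitely many $\delta_n$ are nonzero (that is what you are proving); rather, it follows directly from elliptic boundary regularity, since the Dirichlet datum $\He[k,\phi]|_{\partial\Omega}$ is real-analytic and hence $u\in C^\infty(\overline\Omega)$.
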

The following result concerns ellipses where the origin is one of the foci.
\begin{thm}\label{thm:ellipse2focus}
	Let $\Omega$ be an \emph{ellipse} with one of its foci located at the origin.
	Suppose the eccentricity $e$ of $\Omega$ satisfies
	\begin{equation*}
		\begin{cases}
			0<e^2<3/4-1/(4\sqrt{q}),&\qquad\mbox{if $q>1$ or $1/4<q<1$},
			\\0<e^2< q,&\qquad\mbox{if $0<q\le1/4$}.
		\end{cases}
	\end{equation*}
	Then, given a $C
	^1$ function $\phi$ on $\Ss^1$, which is not identically zero,
	there are at most finitely many $k$'s such that $\Omega$ is non-scattering in the presence of the incident wave $v=\He[k,\phi]$.
\end{thm}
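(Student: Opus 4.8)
The plan is to treat this alongside Theorems~\ref{thm:ellipse} and \ref{thm:disk} (the hypotheses of all three being of the same nature): the eccentricity assumption will enter only at the very end, as a non-degeneracy condition for an asymptotic argument. After a rigid motion we may assume the focus of $\Omega$ lies at the origin and the major axis along $\{x_2=0\}$, so that in polar coordinates $\partial\Omega=\{r=R(\omega)\}$ with $R(\omega)=a(1-e^2)/(1+e\cos\omega)$ and $\min_\omega R(\omega)=a(1-e)>0$. First I would record the standard reduction: since $q$ is a nonzero constant $\ne1$, Green's identity applied to $\tilde u=u-\He[k,\phi]$ in \eqref{eq:schiffer} shows that if $\Omega$ is non-scattering at $k$ then $\int_\Omega\He[k,\phi]\,w\,dx=0$ for every $w\in H^2(\Omega)$ with $\Delta w+k^2q\,w=0$ in $\Omega$. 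Testing against the plane waves $w=e^{\im\sqrt q\,k\,\hat\eta\cdot x}$, $\hat\eta\in\Ss^1$, and using $\int_\Omega e^{\im\zeta\cdot x}dx=\widehat{\chi_\Omega}(-\zeta)$ --- a Bessel function of an affine-quadratic form of $\zeta$, since $\Omega$ is an affine image of a disk --- this becomes
\begin{equation*}
 G_k(\hat\eta):=\int_{-\pi}^{\pi}\phi(\theta)\,\widehat{\chi_\Omega}\big(-k(\cos\theta,\sin\theta)-\sqrt q\,k\,\hat\eta\big)\,d\theta=0\qquad\text{for all }\hat\eta\in\Ss^1 ,
\end{equation*}
while testing against $w=J_m(\sqrt q\,k\,r)e^{\im m\omega}$, together with the Jacobi--Anger expansion $\He[k,\phi]=\sum_n c_n J_n(kr)e^{\im n\omega}$ (here $c_n=2\pi\im^n\hat\phi_n$, $\sum_n|c_n|<\infty$, and not all $c_n$ vanish because $\phi\in C^1(\Ss^1)\setminus\{0\}$), gives a second family of conditions $H_k^{(m)}=0$, $m\in\ZZ$. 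Each $G_k(\hat\eta)$ and $H_k^{(m)}$ is real-analytic in $k>0$.

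Next I would split according to the (putatively infinite) non-scattering set $S$. If $S$ has an accumulation point in $[0,\infty)$, then real-analyticity forces $H_k^{(m)}\equiv0$ for all $m$ and all $k>0$; expanding near $k=0$, where the Bessel factors degenerate to monomials, turns each $H_k^{(m)}$ into a power series whose coefficients are moments of $\phi$ against monomials over $\Omega$, and matching these coefficients order by order --- the relevant moments of this $\Omega$ being nonzero, as one checks --- yields $\hat\phi_n=0$ for every $n$, i.e.\ $\phi\equiv0$, a contradiction; this step does not involve the eccentricity. It remains to treat $S=\{k_j\}$ with $k_j\to\infty$.

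For that case I would insert the large-argument expansion of the Bessel function into $G_{k_j}(\hat\eta)$ (legitimate since the affine form is bounded away from $0$, using $\min_\omega R>0$ and $\sqrt q\ne1$) and apply stationary phase in $\theta$, obtaining, uniformly in $\hat\eta\in\Ss^1$,
\begin{equation*}
 k_j^{\,2}\,G_{k_j}(\hat\eta)=\sum_{\ell}a_\ell(\hat\eta)\,e^{\im k_j\lambda_\ell(\hat\eta)}+o(1),
\end{equation*}
a finite sum in which each phase $\lambda_\ell(\hat\eta)$ is a smooth function of $\hat\eta$ built from $e$, $\sqrt q$ and the critical angle $\theta_\ell(\hat\eta)$, and each amplitude has the form $a_\ell(\hat\eta)=\phi\big(\theta_\ell(\hat\eta)\big)\,m_\ell(\hat\eta)$ with $m_\ell$ a geometric factor that does not vanish. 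Since $G_{k_j}(\hat\eta)=0$ for \emph{all} $\hat\eta$, integrating the square of the left side over $\hat\eta\in\Ss^1$ gives $\sum_\ell\|a_\ell\|_{L^2(\Ss^1)}^2+\sum_{\ell\ne\ell'}\int a_\ell\,\overline{a_{\ell'}}\,e^{\im k_j(\lambda_\ell-\lambda_{\ell'})}\,d\hat\eta=o(1)$; the off-diagonal terms tend to $0$ by the Riemann--Lebesgue lemma \emph{provided $\lambda_\ell-\lambda_{\ell'}$ is non-constant on $\Ss^1$ for all $\ell\ne\ell'$}, so letting $j\to\infty$ forces $a_\ell\equiv0$, hence $\phi\big(\theta_\ell(\hat\eta)\big)=0$ for every $\hat\eta$ and every $\ell$. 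Because the critical angles $\theta_\ell(\hat\eta)$ sweep all of $\Ss^1$ as $\hat\eta$ varies, this gives $\phi\equiv0$, the desired contradiction.

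The main obstacle --- the point at which the eccentricity hypothesis is used --- is verifying, for $R(\omega)=a(1-e^2)/(1+e\cos\omega)$, the three non-degeneracy inputs above: (i) the relevant stationary points $\theta_\ell(\hat\eta)$ are non-degenerate; (ii) $\hat\eta\mapsto\theta_\ell(\hat\eta)$ covers $\Ss^1$, which breaks down for an ellipse that is too eccentric, since then the factor $1/(1+e\cos\omega)$ pins the critical angle near $\omega=\pi$; and, most delicately, (iii) the non-resonance condition that $\lambda_\ell-\lambda_{\ell'}$ is non-constant. Point (iii) is the heart of the matter: for a disk centered at the origin all the $\lambda_\ell$ are \emph{constant}, which is exactly why Proposition~1 produces infinitely many non-scattering wave numbers there. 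Writing out the critical values explicitly, all three requirements reduce to a single inequality relating $e$, $q$ and the position of the critical point; optimizing it --- here the function $t\mapsto t(1-t^2)$, maximal at $t=1/\sqrt3$, makes its appearance --- yields exactly the thresholds $e^2<3/4-1/(4\sqrt q)$ for $q>1/4$ and $e^2<q$ for $0<q\le1/4$. Carrying out this optimization, and establishing the stationary-phase expansion above with remainder uniform in $\hat\eta$ along $k=k_j$, is the bulk of the remaining work.
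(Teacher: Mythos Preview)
Your plan diverges from the paper's proof in a basic way, and the divergence is where the gaps lie. In the paper, Theorem~\ref{thm:ellipse2focus} is not proved directly at all: it is obtained as a corollary of the general star-shaped result, Theorem~\ref{thm:main1Star}, by writing $\partial\Omega$ in polar form $\rho(\theta)=a(1-e^2)/(1+e\cos\theta)$ and checking the admissibility condition \eqref{eq:StarBigCond1}, namely $(\ln\rho)''(\theta)<\sqrt{q}/(1+\sqrt{q})$. That is a short calculus exercise: one finds $\max_\theta(\ln\rho)''=e/(1+e)$ when $e<1/2$ and $\max_\theta(\ln\rho)''=1/\bigl(4(1-e^2)\bigr)$ when $1/2\le e<1$, and the latter gives exactly $e^2<3/4-1/(4\sqrt{q})$. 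So the eccentricity threshold comes from the \emph{admissibility} inequality that guarantees exactly four non-degenerate stationary points and the monotonicity of the maps $\Cp_{j,l}$ (Lemmas~\ref{lem:4criticStarBig}--\ref{lem:crimoreStarBig}), not from a non-resonance condition on phase differences; your claim that the threshold arises from optimizing $t\mapsto t(1-t^2)$ (maximum at $t=1/\sqrt3$) does not match the actual computation.

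More seriously, the Riemann--Lebesgue step you propose --- integrating $|\,\cdot\,|^2$ over $\hat\eta$ and discarding the cross terms --- is not how the paper separates the four contributions, and your version hinges on a non-resonance hypothesis you never verify. The paper instead works \emph{pointwise} in $\eta$: it differentiates the identity $\Int(k;\eta)=0$ repeatedly with respect to $\vT_\eta$, producing powers of the factors $f_\eta^{j,l}=\rho(\Cp_{j,l}\eta)\sin(\Cp_{j,l}\eta-\vT_\eta)$ in front of each exponential, and then uses the resulting $4\times4$ Vandermonde system (Lemma~\ref{lem:asympStarBig}, Corollary~\ref{cor:phiStarBig}). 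This yields, for almost every $\eta$, either $\phi(\Cp_{j,l}\eta+\delta_{l,2}\pi)=0$ or a precise multiplicative relation between values of $|\phi|^2$, and the remainder of the argument is an iterative scheme (using the bijections $\Cp_{j,l}$ and the inequality $\Gp(\,\cdot\,)<1$) that propagates $\phi=0$ around the circle. Your $L^2$-averaging shortcut would, if it worked, bypass all of this; but you would need to show that the phase differences $\psi_\eta^{j,l}-\psi_\eta^{j',l'}$ are non-constant in $\eta$ for every pair, and nothing in your outline addresses this. (Also note that the finite-accumulation case is vacuous here: the $k_n^2$ are transmission eigenvalues and hence discrete with $k_n\to\infty$, so your power-series-at-$k=0$ branch is unnecessary.)
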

Theorems~\ref{thm:disk}~and~\ref{thm:ellipse2focus} are direct consequences of Theorem~\ref{thm:main1Star} (and its counterpart for the case when $0<q<1$, see \cite{Xiao24}), which will be stated shortly. In fact, applying Theorem~\ref{thm:main1Star} we can also show similar results for certain off-center ellipses. For brevity we opt not to present those results. We postpone the proofs of Theorems~\ref{thm:disk}~and~\ref{thm:ellipse2focus} to the Appendix.

We now proceed to present our more general results pertaining to non-circular domains that are star-shaped with respect to the origin. For simplicity we only formulate and prove these  more general results for the case $q>1$, but similar results hold for $0<q<1$, see \cite{Xiao24}. Since Theorems~\ref{thm:disk}~and~\ref{thm:ellipse2focus} are derived as consequences of Theorem~\ref{thm:main1Star}, they are here in reality only verified for $q>1$.  We shall need two definitions.
\begin{defn}
	Given a bounded and simply connected $C^2$ domain $\Omega\subset\RR^2$ containing the origin, we say that $\Omega$ is \emph{star-shaped with respect to the origin} if $\partial \Omega$ admits a parameterization $\{\rho(\theta)\vec{\theta}:\theta\in[0,2\pi)\}$, where $\vec{\theta}=(\cos\theta,\sin\theta)^T$ and $\rho$ is a positive $2\pi$-periodic $C^2$ function. We refer to $\rho$ as the \emph{radius function} of the domain $\Omega$.
\end{defn}

\begin{defn}
	Given a constant $q>1$ we call a \emph{radius function}, $\rho$, \emph{admissible} (with respect to $q$) if it satisfies the following ``smallness'' condition:
		\begin{equation}\label{eq:StarBigCond1}
			(\ln\rho)''(t)<\sqrt{q}/(1+\sqrt{q})\qquad\mbox{for all $t$}.
		\end{equation}
\end{defn}
Note that we only impose an upper bound for $(\ln\rho)''$, besides the natural condition $\int_{0}^{2\pi}(\ln\rho)''(t)dt=0$ due to periodicity. In particular, the quantity $-(\ln\rho)''(t)$ is allowed to be arbitrarily large in an arbitrarily small interval.

Our first result for these star-shaped domains concerns domains where ``half" of the boundary consists of circular arcs.
\begin{thm}\label{thm:Star1}
	Suppose $q>1$ and suppose  $\Omega$ is a $C^2$ domain, which is \emph{star-shaped} with respect to the origin with an admissible radius function $\rho$. Assume that $\rho$ satisfies
	\begin{equation*}
		\mbox{$\rho'=0$\quad in $\N$, \AND $\rho'\neq 0$\quad a.e. in $\N+\pi$,}
	\end{equation*}
	for some relatively open subset $\N$ of $[0,2\pi)$ satisfying $|\N|=\pi$ and $\overline{\N\cup(\N+\pi)}=[0,2\pi)$ (in the sense of the quotient space $\RR/(2\pi\ZZ)$). Let $\phi$ be a nontrivial $C^1$ function on $\Ss^1$.
	Then there are at most finitely many wave-numbers $k$ such that $\Omega$ is non-scattering in the presence of the incident wave $v=\He[k,\phi]$.
\end{thm}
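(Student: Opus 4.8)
\textbf{Proof proposal for Theorem~\ref{thm:Star1}.}

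The plan is to argue by contradiction: suppose there are infinitely many wave numbers $\{k_j\}$, $k_j\to\infty$, for which $\Omega$ is non-scattering with the fixed density $\phi$. For each $k_j$ we obtain a solution $u_j$ of the over-determined problem \eqref{eq:OverDet} with $v_j=\He[k_j,\phi]$, so that $u_j=v_j$ and $\partial_\nu u_j=\partial_\nu v_j$ on $\partial\Omega$. The strategy is to derive a boundary identity by combining the interior PDEs $\Delta u_j+k_j^2 q\,u_j=0$ and $\Delta v_j+k_j^2 v_j=0$ via integration by parts against suitable test functions, and then to extract asymptotic information from this identity as $k_j\to\infty$. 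The natural test objects are complex geometric optics (CGO) type solutions, or more elementarily the functions $e^{\pm\lambda z}$ and $e^{\pm\lambda\bar z}$ (harmonic exponentials) integrated against the Cauchy data; pairing the difference $k_j^2(q-1)\int_\Omega v_j w = \int_{\partial\Omega}(\partial_\nu u_j\, w - u_j\,\partial_\nu w) - \dots$ isolates, on the left, the known Herglotz quantity and, on the right, a boundary term that can be analyzed by stationary phase. The hypothesis on $\N$ — that $\rho'$ vanishes on a set of full measure $\pi$ and is nonzero a.e.\ on the antipodal set — is precisely what controls the geometry of the phase function $\rho(\theta)\,\vec\theta\cdot\xi$ appearing in these oscillatory boundary integrals: on the circular-arc portion $\N$ the phase is $\rho_0\cos(\theta-\arg\xi)$ up to constants, giving clean non-degenerate stationary points, while on $\N+\pi$ the condition $\rho'\neq0$ a.e.\ rules out the boundary from locally looking like a concentric circle and forces a mismatch.

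The key steps, in order, would be: (1) Normalize $\|\phi\|$ and set up the family of identities obtained by testing \eqref{eq:OverDet} and the Helmholtz equation for $v_j$ against entire solutions $w_\ell$ of $\Delta w+k_j^2 w=0$, using Green's formula and the vanishing of the Cauchy data of $u_j-v_j$ on $\partial\Omega$; this produces, for each $j$ and each test direction, an equation of the schematic form $k_j^2(q-1)\int_\Omega v_j\,w_\ell\,dx = \big(\text{boundary integral in }u_j\text{ alone}\big)$, and crucially the left side is \emph{computable} because $v_j=\He[k_j,\phi]$ and $w_\ell$ are explicit. (2) Apply the stationary-phase / steepest-descent method to the $k_j\to\infty$ asymptotics of both sides; the admissibility condition \eqref{eq:StarBigCond1}, $(\ln\rho)''<\sqrt q/(1+\sqrt q)$, enters here to guarantee that the relevant phase functions governing the interaction of the ``slow'' wave speed $\sqrt q$ inside $\Omega$ with the geometry have the correct convexity so that no spurious cancellation occurs and the leading-order contributions localize at finitely many boundary points determined by $\phi$ and by $\partial\Omega$. (3) On the arc part $\N$, use the explicit form of circular-arc transmission solutions (separation of variables, Bessel/Hankel functions as in Proposition~1) to pin down the leading asymptotics exactly; on $\N+\pi$, use $\rho'\neq0$ a.e.\ to show the corresponding asymptotic coefficients are forced to obey an over-determined system. (4) Combine: the asymptotic expansions from the two halves, matched along the closure condition $\overline{\N\cup(\N+\pi)}=[0,2\pi)$, can hold for infinitely many $k_j$ only if $\phi\equiv0$, contradicting the hypothesis.

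The main obstacle I expect is step (2)–(3): controlling the interior integrals $\int_\Omega v_j\,w_\ell$ and the boundary integrals \emph{uniformly} as $k_j\to\infty$ when $\phi$ is only assumed $C^1$ (so its Fourier coefficients decay only like $1/n$, and $\He[k,\phi]$ does not have a single clean stationary-phase expansion but rather a superposition one must sum). One must show that the leading-order term in the $k_j\to\infty$ asymptotics is genuinely nonzero — i.e.\ that the ``principal symbol'' of the obstruction does not accidentally vanish for the given $\phi$ — and this is exactly where the quantitative smallness of the eccentricity / of $(\ln\rho)''$ and the separation of the arc and non-arc parts of $\partial\Omega$ must be exploited; getting a clean lower bound on this principal term, rather than merely an $o(k_j^2)$ upper bound on everything, is the crux. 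A secondary technical difficulty is handling the non-smooth junctions between $\N$ and $\N+\pi$ (only $C^2$ globally is assumed for $\Omega$, but $\rho'$ is required to vanish identically on $\N$), which requires care in the stationary-phase estimates near $\partial\N$; I would handle this by a partition of unity subordinate to $\N$ and $\N+\pi$ together with a standard non-stationary-phase estimate on the overlap, absorbing the junction contributions into lower-order error terms.
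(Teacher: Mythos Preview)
Your outline misses the actual mechanism and, as written, would not close. Three concrete gaps:

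\textbf{Wrong test functions and wrong phase.} The test functions are not harmonic exponentials or CGO solutions; they are $w_\eta(x)=e^{ik\sqrt q\,\eta\cdot x}$, solutions of $\Delta w+k^2 q\,w=0$. Pairing these against $\He[k,\phi]$ produces a \emph{double} oscillatory integral over $(\theta,\theta_\xi)\in\partial\Omega\times\Ss^1$ with phase $\psi_\eta(\theta,\theta_\xi)=(\sqrt q\,\eta+\xi)\cdot y(\theta)$ (cf.\ \eqref{eq:IntFinal}--\eqref{eq:psi}). The stationary-phase analysis is two-dimensional, and under the admissibility condition \eqref{eq:StarBigCond1} there are exactly \emph{four} nondegenerate stationary points $\Cp_{j,l}\eta$, $j,l=1,2$, for every $\eta$ (Lemma~\ref{lem:4criticStarBig}). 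Your plan to split $\partial\Omega$ by a partition of unity into $\N$ and $\N+\pi$ and run separate one-dimensional stationary-phase arguments on each piece is the wrong decomposition: the stationary points are indexed by $\eta$, not by which arc they lie on, and a cutoff on $\partial\Omega$ destroys the clean four-term expansion.

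\textbf{Missing the Vandermonde step.} From the leading four-term asymptotics alone (your step (2)) you cannot conclude anything about $\phi$, because the four terms have oscillatory factors $e^{ik\psi_\eta^{j,l}}$ with unrelated phases. The paper's key device is to differentiate the identity $\Int(k;\eta)=0$ repeatedly in $\theta_\eta$; each differentiation multiplies the $(j,l)$ term by $f_\eta^{j,l}:=\eta^\perp\cdot y(\Cp_{j,l}\eta)=\rho(\Cp_{j,l}\eta)\sin(\Cp_{j,l}\eta-\theta_\eta)$. Taking $N=0,1,2,3$ gives a $4\times4$ Vandermonde system in the $f_\eta^{j,l}$ (Lemma~\ref{lem:asympStarBig}). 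This is what converts the asymptotics into pointwise constraints on $\phi$, and nothing in your proposal replaces it.

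\textbf{Misreading the role of the hypothesis.} The assumption $\rho'=0$ on $\N$ and $\rho'\ne0$ a.e.\ on $\N+\pi$ is not used to make the phase ``clean'' on $\N$ versus ``mismatched'' on $\N+\pi$. Its role is arithmetic: for $\theta_\eta\in\N+\pi$ one has $\rho'(\theta_\eta+\pi)=0\neq\rho'(\theta_\eta)$, which by \eqref{eq:critDiffrStarBig} forces $\Cp_{2,1}\eta=\Cp_{1,2}\eta=\theta_\eta+\pi$ and hence $f_\eta^{1,2}=f_\eta^{2,1}=0$, while Lemma~\ref{lem:crimoreStarBig} gives $0<|f_\eta^{2,2}|<|f_\eta^{1,1}|$. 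With three distinct values $0,f_\eta^{2,2},f_\eta^{1,1}$ among the four, the Vandermonde system (Corollary~\ref{cor:phi2StarBig}) forces $\phi(\Cp_{1,1}\eta)=\phi(\Cp_{2,2}\eta+\pi)=0$. Since $\Cp_{1,1}$ and $\Cp_{2,2}$ map each component of $\N+\pi$ bijectively onto itself, varying $\eta$ kills $\phi$ on all of $(\N+\pi)\cup\N$, hence everywhere. Your step (3) --- Bessel functions on the arc, ``over-determined system'' on the non-arc --- does not reach this conclusion, and the ``main obstacle'' you anticipate (summing over Herglotz directions with only $C^1$ decay) is a non-issue once the integral over $\theta_\xi$ is absorbed into the 2D stationary-phase computation.
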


\begin{rem}
	An example of $\Omega$ in Theorem~\ref{thm:Star1} is a star-shaped domain with the radius function
	\begin{equation*}
		\rho(t)=
		\begin{cases}
			1+a\sin^3 t ,&\qquad 0<t<\pi,
			\\
			1,& \qquad \pi<t<2\pi,
		\end{cases}
	\qquad \mbox{with $a\in(0,1)$ being a constant.}
	\end{equation*}
	$\Omega$ forms a $C^2$ ``egg'' shape (see first frame of Figure 1). One can verify that $0<5a\le\sqrt{q}/(1+\sqrt{q})$ is a sufficient condition for \eqref{eq:StarBigCond1} to be satisfied. We show in Figure~\ref{fig:1} some domains $\Omega$ that satisfy the conditions in Theorem~\ref{thm:Star1}.
	\begin{figure}[h]
		\centering
		\includegraphics{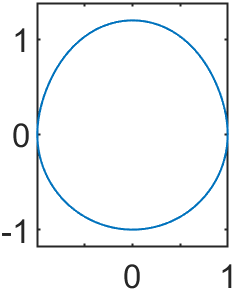}\hfil\includegraphics{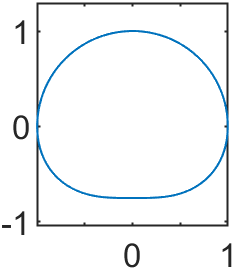}\hfil\includegraphics{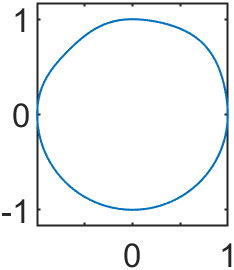}\hfil\includegraphics{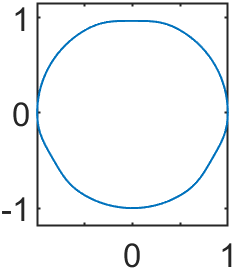}
		\caption{Examples of star-shaped domains fulfilling conditions in Theorem~\ref{thm:Star1}. From left to right: $\rho(t)=1+a\chi_{(0,\pi)}\sin^3t$, $\rho(t)=1+a\chi_{(\pi,2\pi)}\sin^3t$, $\rho(t)=1+a\chi_{(0,\pi)}\sin^3 2t$,  $\rho(t)=1+a\chi_{(0,\pi/3)\cup(2\pi/3,\pi)\cup(4\pi/3,5\pi/3)}\sin^3 3t$, where the values of $a\in(0,1)$ are chosen to be small so that \eqref{eq:StarBigCond1} is satisfied for all $q>1$.}
		\label{fig:1}
	\end{figure}
\end{rem}
The following result is the ``complement'' of Theorem~\ref{thm:Star1} in the sense that, it concerns domains with boundaries containing no circular arcs.
\begin{thm}\label{thm:main1Star}
Suppose $q>1$ and suppose  $\Omega$ is a $C^2$ domain, which is \emph{star-shaped} with respect to the origin with an admissible radius function $\rho$, that satisfies $\rho'\neq0$ a.e. in $[0,2\pi)$, and for any $t$ $\rho'(t)=0$ if and only if $\rho'(t+\pi)=0$. Assume furthermore that $\rho$ satisfies {\bf one} of the following two conditions:
	\begin{enumerate}[(i)]
		\item For each $t$ we have $\sgn \rho'(t)=\sgn\rho'(t+\pi)$, or
		\item For each $t$ such that $\rho'(t)=0$, we have $|\rho''(t)|+ |\rho''(t+\pi)|>0$.
	\end{enumerate}
Let $\phi$ be a nontrivial $C^1$ function on $\Ss^1$. Then there are at most finitely many wave numbers $k$ such that $\Omega$ is non-scattering in the presence of the incident wave $v=\He[k,\phi]$.
\end{thm}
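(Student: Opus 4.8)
The plan is to argue by contradiction. Assume $\Omega$ is non-scattering at infinitely many wave numbers. Since the (squares of the) non-scattering wave numbers form a subset of the transmission spectrum of \eqref{eq:ITEP}, which for our constant $q\neq 1$ and $C^2$ domain is a discrete subset of $(0,\infty)$ with no finite accumulation point, we may extract a sequence $k_j\to\infty$ of non-scattering wave numbers. For each $j$ let $u_j$ solve \eqref{eq:OverDet} with $k=k_j$; since $q$ is a positive constant, $u_j$ solves $\Delta u_j+\kappa_j^2u_j=0$ in $\Omega$ with $\kappa_j=\sqrt q\,k_j$, and since $\Omega$ is star-shaped with respect to the origin, $u_j$ admits a Fourier--Bessel expansion $u_j=\sum_{n\in\ZZ}a_n^{(j)}J_n(\kappa_j r)e^{\im n\vT}$ in $\Omega$. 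On the other side, the Jacobi--Anger expansion gives $\He[k_j,\phi]=\sum_{n\in\ZZ}\im^n c_n J_n(k_j r)e^{\im n\vT}$, where the $c_n$ are the Fourier coefficients of $\phi$; since $\phi\in C^1(\Ss^1)$ they decay (in particular $\sum_n|c_n|<\infty$ and $nc_n\to 0$), and $\phi\not\equiv 0$ gives $c_{n_0}\neq 0$ for some $n_0$. The interface conditions in \eqref{eq:OverDet} on $\partial\Omega=\{\rho(\vT)\vec{\theta}\}$ are equivalent to matching $u$ and $\partial_r u$ at $r=\rho(\vT)$ (the tangential derivative then being automatic), hence for every $\vT$
\[
  \sum_n a_n^{(j)}J_n(\kappa_j\rho(\vT))e^{\im n\vT}=\sum_n \im^n c_n J_n(k_j\rho(\vT))e^{\im n\vT},
\]
\[
  \sqrt q\sum_n a_n^{(j)}J_n'(\kappa_j\rho(\vT))e^{\im n\vT}=\sum_n \im^n c_n J_n'(k_j\rho(\vT))e^{\im n\vT}.
\]

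The heart of the matter is a geometric--optics dichotomy. By stationary phase, the right-hand sides above are, to leading order in $k_j$, the sum of an ``outgoing'' wave of phase $k_j\rho(\vT)$ and amplitude $\propto\phi(\vT)$ and an ``incoming'' wave of phase $-k_j\rho(\vT)$ and amplitude $\propto\phi(\vT+\pi)$; these are the dominant ($\sim k_j^{-1/2}$) contributions, so the matching forces the left-hand sides to have the same leading behaviour. But $u_j$ is a Helmholtz solution of the \emph{different} wave number $\kappa_j=\sqrt q\,k_j$ that is regular at the origin: its Fourier--Bessel content is organised around the origin-centred caustic circles $r=n/\kappa_j$, and regularity at the origin ties the component of local (in $\vT$) frequency $+f$ to the one of frequency $-f$ through a fixed phase shift. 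Tracking these local frequencies along $\partial\Omega$ via the Debye/Olver uniform asymptotics of $J_n,J_n'$ --- splitting into the oscillatory range $|n|<\kappa_j\rho(\vT)$, the exponentially small range $|n|>\kappa_j\rho(\vT)$, and the Airy transition $|n|\approx\kappa_j\rho(\vT)$ --- one sees that a mode of $u_j$ can contribute to the matching at a given $\vT$ only when Snell's relation (preservation of the tangential wave number across $\partial\Omega$) holds there for the corresponding value of $n/k_j$. Because $n/k_j$ is a fixed spectral parameter while $\rho,\rho'$ vary, this can occur only on an exceptional set of $\vT$ unless $\rho$ is constant; combining this with the $\partial_r u=\partial_r v$ identity and the antipodal phase relation coming from regularity at the origin, one concludes that on $\{\rho'\neq 0\}$ all amplitudes vanish, in particular $\phi(\vT)=\phi(\vT+\pi)=0$ there. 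Since $\rho'\neq 0$ a.e., this forces $\phi\equiv 0$, the desired contradiction. The admissibility bound $(\ln\rho)''<\sqrt q/(1+\sqrt q)$ is precisely the threshold (strictly stronger than convexity of $\partial\Omega$) ensuring the phase functions appearing in this stationary-phase/uniform-asymptotics bookkeeping are monotone and non-degenerate, so that the refracted-ray geometry produces no spurious cancellations; hypotheses (i) and (ii) are what is needed at the isolated antipodal pairs $\{\vT,\vT+\pi\}$ where $\rho'$ vanishes and the outgoing and incoming phases momentarily coincide, a finer analysis using $\rho''$ (and the sign pattern of $\rho'$) then replacing the transversality argument.

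The step I expect to be the main obstacle is the rigorous control of the ``high-order'' part of the transmitted field: the modes $n$ comparable to $\kappa_j\rho$, in the turning-point regime where the elementary $e^{\pm\im\kappa_j r}$ description breaks down. One must show, uniformly in $j$, that the contributions of these modes to the two interface identities either cancel or are of lower order, so that the leading-order picture genuinely governs the matching; this is where the uniform Bessel asymptotics, the admissibility bound, and a compactness argument (extracting subsequential limits of the rescaled amplitude profiles along $\partial\Omega$, in order to pass from the sequence $k_j$ to a limiting identity) have to be combined. A further technical point is the bookkeeping at the finitely many critical angles of $\rho$, where the full strength of (i)/(ii) is used; and one must finally note that circular sub-arcs of $\partial\Omega$ --- on which the above mechanism would break down, since there $\rho'\equiv 0$ and the matching reduces to the centred-disk situation --- are excluded here by the hypothesis $\rho'\neq 0$ a.e., this borderline phenomenon being exactly what is isolated in Theorem~\ref{thm:Star1}.
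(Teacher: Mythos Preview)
Your approach is genuinely different from the paper's, and the gap you yourself flag is fatal in the form you propose. The paper never attempts to analyse the transmitted field $u$ directly. Instead, it tests the overdetermined problem \eqref{eq:OverDet} against the explicit plane waves $w(x)=e^{\im k\sqrt q\,\eta\cdot x}$ (which are solutions of $\Delta w+k^2qw=0$); one integration by parts kills $u$ entirely and leaves the oscillatory integral
\[
\Int(k;\eta;\Omega)=\int_{-\pi}^{\pi}\int_{-\pi}^{\pi}\Psi_\eta(\theta,\vT_\xi)\,\phi(\xi)\,e^{\im k\psi_\eta(\theta,\vT_\xi)}\,d\vT_\xi\,d\theta=0,\qquad \psi_\eta=(\sqrt q\,\eta+\xi)\cdot y(\theta),
\]
with \emph{known} phase and amplitude. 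Two-dimensional stationary phase (in $(\theta,\vT_\xi)$) then applies cleanly; the admissibility bound \eqref{eq:StarBigCond1} is exactly what makes $\det D^2\psi_\eta$ nonzero at every critical point, so all four stationary points $\Cp_{j,l}\eta$ are nondegenerate. Crucially, the identity holds for every $\eta\in\Ss^1$, and by differentiating $N$ times in $\vT_\eta$ one obtains \eqref{eq:asympGen_diffeta}: a Vandermonde system in the quantities $f_\eta^{j,l}=\rho(\Cp_{j,l}\eta)\sin(\Cp_{j,l}\eta-\vT_\eta)$. That system, together with the inequality $|f_\eta^{2,l+1}|<|f_\eta^{1,l}|$ from Lemma~\ref{lem:crimoreStarBig}, is what produces Corollary~\ref{cor:phiStarBig} --- not the bare conclusion $\phi(\vT_\eta)=\phi(\vT_\eta+\pi)=0$ you assert, but a pair of multiplicative relations of the form $|\phi(\Cp_{j,1}\eta)|^2=\Gp(\vT_\eta)|\phi(\Cp_{j,2}\eta+\pi)|^2$ together with the geometric constraint $f_\eta^{j,1}=f_\eta^{j,2}$. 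In Case~(i) that geometric constraint already contradicts $\sgn\rho'(t)=\sgn\rho'(t+\pi)$; in Case~(ii) the paper must iterate the maps $\Cp_{1,2}\Cp_{1,1}^{-1}-\pi$ etc.\ to drive the argument down to $t=0$ and extract the contradictory identities \eqref{prfFinalStarBig}--\eqref{prfFinal2StarBig} involving $\rho''(0),\rho''(\pi)$.

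By contrast, your plan keeps the unknown coefficients $a_n^{(j)}$ of $u_j$ in the game. You have no a priori bound on how much mass they put in the turning-point band $|n|\approx \kappa_j\rho(\vT)$, and the boundary matching is a single identity in $\vT$ --- you have nothing like the free parameter $\eta$ to play against it. The ``Snell's relation'' heuristic (that a mode $n$ only contributes where its tangential wavenumber matches) is not an equality about the full sum: the modes near the caustic are individually of the same order as the leading term, their number is $\sim k_j^{1/3}$, and there is no mechanism in your sketch forcing them to cancel. Nor is the antipodal phase relation enough, since it only couples the two halves of a fixed mode rather than constraining the profile $n\mapsto a_n^{(j)}$. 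The compactness argument you allude to would at best produce a limiting amplitude profile, not rule one out. In short, the obstacle you name is exactly the reason the paper's route --- eliminating $u$ at the outset via the plane-wave test functions --- is the right one here; your scheme, as stated, does not close.
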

\begin{rem}
	A particular case, where the condition (i) in Theorem~\ref{thm:main1Star} is satisfied,  is when $\rho$ is $\pi$-periodic, namely, when $\Omega$ is \emph{symmetric with respect to the origin}.
\end{rem}
\begin{rem}
		We note that the ellipses considered in Theorem~\ref{thm:ellipse} are star-shaped with respect to the origin, with radius functions
		\begin{equation*}
			\rho(t)=\frac{b}{\sqrt{1-e^2\cos^2t}},
		\end{equation*}
	and hence
	\begin{equation*}
		(\ln\rho)'(t)=\frac{-e^2\cos t\sin t}{1-e^2\cos^2t}
		\AND
(\ln\rho)''(t)=e^2\frac{1-(2-e^2)\cos^2t}{(1-e^2\cos^2t)^2}.
	\end{equation*}
We can verify that
\begin{equation*}
	\max_{t}\,(\ln\rho)''(t)=\begin{cases}(\ln\rho)''|_{\cos t=0}=
		e^2,&\qquad\mbox{if $0<e^2\le 2/3$},
		\\(\ln\rho)''|_{\cos^2 t=\frac{3e^2-2}{e^2(2-e^2)}}=\dfrac{(2-e^2)^2}{8(1-e^2)}>e^2,&\qquad\mbox{if $2/3<e^2<1$}.
	\end{cases}
\end{equation*}
In particular, concerning ellipses centered at the origin, Theorem~\ref{thm:ellipse} is more general than Theorem~\ref{thm:main1Star} when ($q>1$ and) the eccentricity satisfies $2/3<e^2<1$, in view of the admissibility condition \eqref{eq:StarBigCond1}.
\end{rem}
The following result concerns Herglotz waves with real-analytic densities.
\begin{thm}\label{thm:StarAnly}
Suppose $q>1$ and suppose  $\Omega$ is a $C^2$ domain, which is \emph{star-shaped} with respect to the origin with an admissible radius function $\rho$. Let $\phi$ be a
nontrivial function on $\Ss^1$, with $|\phi|^2$ \emph{real-analytic}. Furthermore assume that $\rho$ satisfies {\bf one} of the following two conditions:
	\begin{enumerate}[(i)]
		\item There exists some $t_0$ such that $\rho'(t_0)\neq0$ and $\rho'(t_0)\rho'(t_0+\pi)\ge 0$, or
		\item The quantity $|\rho'(t)|+|\rho'(t+\pi)|+ |\rho''(t)| +|\rho''(t+\pi)|$ is strictly positive for all $t$.
	\end{enumerate}
	Then there are at most finitely many wave numbers $k$ such that $\Omega$ is non-scattering in the presence of the incident wave $v=\He[k,\phi]$.
\end{thm}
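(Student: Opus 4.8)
The plan is to argue by contradiction, following the scheme behind Theorem~\ref{thm:main1Star} but with the real‑analyticity of $|\phi|^2$ taking over the role of the stronger global sign hypotheses. Suppose $\Omega$ were non‑scattering for $\He[k,\phi]$ at infinitely many wave numbers $k$. Since, by \eqref{eq:ITEP}, the squares of the non‑scattering wave numbers form a subset of the real transmission eigenvalues, and the set of real transmission eigenvalues has no finite accumulation point, there would be a sequence $k_j\to\infty$ of non‑scattering wave numbers with transmission pairs $(u_j,v_j)$, $v_j=\He[k_j,\phi]$, where $u_j$ solves $\Delta u_j+k_j^2 q\,u_j=0$ in $\Omega$ and $u_j-v_j$, $\partial_\nu u_j-\partial_\nu v_j$ vanish on $\partial\Omega$. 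First I would extract from these over‑determined conditions a family of exact identities. Green's identity and the transmission conditions give orthogonality relations such as $\int_\Omega u_j\overline{v_j}=0$; Rellich–Pohozaev identities, applied to $\Delta+k_j^2 q$ on $u_j$ and to $\Delta+k_j^2$ on $v_j$ with the same (conformal Killing / holomorphic) vector fields, have boundary integrands that agree except for the zeroth‑order term, so subtracting them produces identities of the form $(q-1)\int_{\partial\Omega}(X\!\cdot\!\nu)\,|v_j|^2\,ds=\int_\Omega(\operatorname{div}X)\,(|v_j|^2-q|u_j|^2)\,dx$, in which star‑shapedness guarantees $x\!\cdot\!\nu>0$ on $\partial\Omega$ for the dilation field. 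The point of these is to move all boundary information onto $v_j$ alone, leaving only the interior quantities $\int_\Omega(\text{weight})\,|u_j|^2$ to be controlled separately.

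The longest part is the high‑frequency analysis of these identities as $j\to\infty$. Parameterising $\partial\Omega$ by $x(\theta)=\rho(\theta)\vec\theta$, one has $v_j(x(\theta))=\int_{-\pi}^{\pi}\phi(s)\,e^{ik_j\rho(\theta)\cos(\theta-s)}\,ds$, and the phase $s\mapsto\rho(\theta)\cos(\theta-s)$ has exactly the two nondegenerate critical points $s=\theta$ and $s=\theta+\pi$, with critical values $\pm\rho(\theta)$. This is the mechanism that forces the \emph{antipodal pairing}: after substituting the stationary‑phase expansions, each boundary functional picks up "diagonal" contributions weighted by $|\phi(\theta)|^2$ and $|\phi(\theta+\pi)|^2$ and "cross" contributions carrying the oscillatory factor $e^{\pm 2ik_j\rho(\theta)}$. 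Because only $|\phi|^2$ is assumed regular, I would run this analysis at the level of the quadratic quantities ($\int_{\partial\Omega}(\cdots)|v_j|^2$, $\int_\Omega(\cdots)|v_j|^2$, $\int_\Omega(\cdots)|u_j|^2$), whose leading behaviour depends on $\phi$ only through $|\phi|^2$ — which is exactly why the assumption on $|\phi|^2$ is the natural one. The interior terms $\int_\Omega(\text{weight})|u_j|^2$ require knowing $u_j$ near $\partial\Omega$; since $u_j$ and $v_j$ share Cauchy data, $u_j$ is, to leading order, the geometric‑optics continuation into $\Omega$ of the Herglotz data, a WKB wave with phase $\Phi$ solving $|\nabla\Phi|^2=q$ and $\Phi|_{\partial\Omega}\in\{\pm\rho(\theta)\}$. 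The admissibility condition \eqref{eq:StarBigCond1} is precisely what guarantees that this refracted ray field has no caustic in a neighbourhood of $\partial\Omega$ inside $\Omega$, so the WKB description is valid with bounded amplitude and yields the needed asymptotics.

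Feeding all of this into the identities, the oscillatory cross‑terms are disposed of by a second stationary‑phase argument in $\theta$: their $\theta$‑critical points are the critical points of $\rho$, and there (since the amplitude $\rho'$ also vanishes there) the contributions are of strictly lower order, so they vanish in the limit. What survives is a limiting relation of the form
\begin{equation*}
	\int_{-\pi}^{\pi}\Psi(\theta)\,d\theta=0,\qquad \Psi(\theta)=W_1(\theta)\,|\phi(\theta)|^2+W_2(\theta)\,|\phi(\theta+\pi)|^2,
\end{equation*}
or, after localising the various identities near individual points, the pointwise statement $\Psi\equiv 0$; here $W_1,W_2$ are explicit in $\rho,\rho',\rho''$ and behave differently on the "circular" locus $\{\rho'=0\}$ and the "non‑circular" locus $\{\rho'\neq0\}$. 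The final step is to show that under hypothesis (i) or (ii) this cannot hold for a nontrivial $\phi$, and this is where the real‑analyticity of $|\phi|^2$ enters decisively: $|\phi|^2$ and $|\phi(\cdot+\pi)|^2$ then vanish only on a finite set (each zero of even order), so $\Psi$ cannot be made to vanish merely by choosing $\phi$ supported off a bad set, and $\Psi$ is itself real‑analytic where the $\rho$‑coefficients are, so it is enough to exhibit one point where $\Psi\neq0$. Under (i), the point $t_0$ with $\rho'(t_0)\neq0$ and $\rho'(t_0)\rho'(t_0+\pi)\ge0$ is such a point — the sign configuration prevents the two terms of $\Psi(t_0)$ from cancelling. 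Under (ii), the fact that $\rho'(t),\rho'(t+\pi),\rho''(t),\rho''(t+\pi)$ never vanish simultaneously keeps the coefficient structure of $\Psi$ nondegenerate enough that, paired with an analytic nontrivial $|\phi|^2$, $\Psi\not\equiv0$. Either way the limiting relation is contradicted, so only finitely many non‑scattering wave numbers occur.

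I expect the main obstacle to be the interior step together with the careful accounting of orders: one must justify the caustic‑free WKB continuation of $u_j$ into $\Omega$ using only the quantitative bound \eqref{eq:StarBigCond1}, estimate all remainders uniformly in $j$ (in particular near the points where $\rho'=0$, where both the stationary‑phase analysis in $s$ and the second one in $\theta$ require a degenerate treatment), and then verify the non‑cancellation in the final step. This last point is exactly where the real‑analyticity of $|\phi|^2$ buys the passage from the strong global sign conditions of Theorem~\ref{thm:main1Star} to the weaker, essentially pointwise conditions (i)/(ii): with analyticity one only needs the obstruction function $\Psi$ to be nonzero somewhere, whereas without it one is forced to ensure a definite sign everywhere to preclude cancellation.
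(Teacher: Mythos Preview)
Your approach is genuinely different from the paper's and has a real gap. The paper never analyses $u_j$ in the interior at all: instead it tests the overdetermined problem against the \emph{exact} interior solutions $w(x)=e^{ik\sqrt{q}\,\eta\cdot x}$, $\eta\in\Ss^1$, and integrates by parts to obtain the explicit boundary integral $\Int(k;\eta;\Omega)$ of \eqref{eq:IntFinal}, which depends only on $\phi$, $\rho$, and the parameter $\eta$. Two-dimensional stationary phase in $(\theta,\theta_\xi)$ is then applied to $\Int(k;\eta)$ and to its $\eta$-derivatives $\Int^{(N)}(k;\eta)$; for each fixed $\eta$ there are exactly four stationary points governed by the maps $\Cp_{j,l}$ of Section~\ref{sec:statStarBig}, and the resulting $4\times4$ Vandermonde system (Lemma~\ref{lem:asympStarBig}, Corollary~\ref{cor:phiStarBig}) yields, for almost every $\eta$, the pointwise identities $f_\eta^{j,1}=f_\eta^{j,2}$ together with $|\phi(\Cp_{1,1}\eta)|^2=\Gp_{11,12}(\eta)\,|\phi(\Cp_{1,2}\eta-\pi)|^2$. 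Real-analyticity of $|\phi|^2$ enters only to guarantee that $\#\Lambda_\eta=4$ off a finite set, so that these identities hold \emph{everywhere} by continuity. The first identity then forces $\rho'<0$ on $(\pi,\tau+\pi)$ whenever $\rho'>0$ on $(0,\tau)$ --- immediately contradicting hypothesis~(i) --- while under~(ii) one computes $\Gp_{11,12}(0)<1$ and runs a short contraction argument against the first local maximum of $|\phi|$.

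Your proposal bypasses the $\eta$-family entirely and instead tries to reach a single relation $\int\Psi=0$ (or $\Psi\equiv0$) via Rellich--Pohozaev identities and a WKB description of $u_j$. The WKB step is the gap you yourself flag, and it is a serious one: you would need a caustic-free geometric-optics approximation of $u_j$ uniformly in $j$ inside $\Omega$, with remainder control good enough to survive the limit, and the admissibility bound \eqref{eq:StarBigCond1} was designed to control the \emph{boundary} stationary-phase Hessian, not an interior eikonal problem. Even granting that, your endgame is too soft: you never identify $W_1,W_2$, so the claim that under~(i) ``the sign configuration prevents cancellation'' is unsupported, and under~(ii) you have no mechanism that ties the non-simultaneous vanishing of $\rho',\rho''$ at $t$ and $t+\pi$ to a definite sign of $\Psi$. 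In the paper's argument the corresponding step is an explicit computation of $\Gp_{11,12}(0)$, which is only possible because one has the sharp pointwise identities coming from the $\eta$-parametrized Vandermonde system --- exactly the structure your integrated approach discards.
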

\begin{rem}
	Note that we do not require $\rho'\ne 0$ a.e. in Theorem \ref{thm:StarAnly} (as in Theorem \ref{thm:main1Star}). Moreover, conditions (i) and (ii) in Theorem \ref{thm:StarAnly} are each much weaker than conditions (i) and (ii) in Theorem \ref{thm:main1Star}. However, the condition on $\phi$ is of course much more restrictive in Theorem~\ref{thm:StarAnly}.
\end{rem}

\subsection{Preliminaries}
If the overdetermined boundary value problem \ref{eq:OverDet} has a solution $u\in H^1(\Omega)$, then integration by parts yields, for any $w$ satisfying $\Delta w+k^2qw=0$ in $\Omega$, that
\begin{eqnarray*}
	k^2\int_{\Omega}(q-1)\,\He[k,\phi]\,w\,dx
	&=&\int_{\partial\Omega} \left( w\,\partial_{\nu} \He[k,\phi]- \He[k,\phi]\,\partial_{\nu} w~ \right) d\sigma(x) \\
	&=&\int_{\partial\Omega} \left( w\,\partial_{\nu} u- u \,\partial_{\nu} w~\right) d\sigma(x)=0~,
\end{eqnarray*}
Since $q$ is a positive constant different from $1$, we may substitute $w(x)=e^{\im k \sqrt{q}\, x\cdot \eta}$, for an arbitrary $\eta\in\Ss^1$, into this identity and obtain
\begin{equation*}
	k^2 \int_\Omega e^{\im k\sqrt{q}\,\eta\cdot x}\int_{-\pi}^{\pi}\phi(\xi)\, e^{\im k\xi\cdot x}\,d\vT_{\xi}\,dx
	=0,\quad\mbox{for any $\eta\in \Ss^1$},
\end{equation*}
or equivalently, via integration by parts,
\begin{equation}
	\label{integralB}
	\im {k} \int_{\partial\Omega}\int_{-\pi}^{\pi}
	\nu \cdot \pare{\xi-\sqrt{q}\eta} \phi(\xi)\,
	e^{\im k \pare{\sqrt{q}\eta+\xi}\cdot x} ~d\vT_{\xi}\,d\sigma(x)
	=0,\quad\mbox{for any $\eta\in \Ss^1$}.
\end{equation}
Here and in the following, for any vector $\xi\in\Ss^1$, we denote by $\vT_{\xi}$ the angular coordinate of $\xi$; in other words $\xi=(\cos\vT_{\xi},\sin\vT_{\xi})^T$.

Let $\Omega$ be a simple connected $C^{2}$ domain. By a parameterization $y=y(\theta)$ of $\partial\Omega$, we understand a bijective orientation-preserving $C^2$ mapping  $y:\Ss^1 \rightarrow \partial \Omega$  with $y'(\theta)\neq 0$ for any $\theta$. Moreover, the orientation of the parameterization is set to be counterclockwise.
We denote $x^{\perp}=(-x_2,x_1)^T$ for any vector $x=(x_1,x_2)^T\in\RR^2$.
With these conventions, the outwards unit normal vector to $\partial \Omega$ is given by $-y'^\perp/|y'|$.
We can now rewrite \eqref{integralB} as 
\begin{equation}\label{eq:IntFinal}
	\Int(k)=\Int(k;\eta;\Omega):= \int_{-\pi}^{\pi} \int_{-\pi}^{\pi}
	\Psi_\eta(\theta,\vT_{\xi})
	\,\phi(\xi)\,e^{\im k\,\psi_\eta(\theta,\vT_{\xi})} ~d\vT_{\xi}\,d\theta=0,\quad\mbox{for any $\eta\in \Ss^1$},
\end{equation}
where
\begin{equation}\label{eq:psi}
	\psi_\eta(\theta,\vT_{\xi})= \pare{\sqrt{q}\,\eta+\xi}\cdot y(\theta)
	\AND
	\Psi_\eta(\theta,\vT_{\xi})
	=-\pare{\sqrt{q} \,\eta-\xi }\cdot y'^{\perp}(\theta).
\end{equation}

We aim to show that, for a wide variety of domains $\Omega$ excluding disks centered at the origin, and for a fixed nontrivial $\phi$, there exist at most finitely many wave numbers $k$ such that $\Omega$ is non-scattering in the presence of the incident wave  $\He[k,\phi]$.
We achieve this with a proof by contradiction, starting with the assumption that there is a sequence of infinitely many such wave numbers. Then, as the square of these wave numbers are transmission eigenvalues (for the problem \eqref{eq:ITEP}) they must accumulate at $\infty$ (see, \cite{CCH16book}).
Consequently, we may consider the asymptotics, as $k\to\infty$, of the Fourier type integral $\Int(k)$ in \eqref{eq:IntFinal}, by using  the method of stationary phase. Finally we obtain a contradiction between the asymptotics and the identity \eqref{eq:IntFinal}.

For any fixed $\eta\in\Ss^1$, a stationary point $(\theta,\vT_{\xi})$ associated with the phase function $\psi_\eta$ is characterized by
\begin{equation}\label{eq:stationaryGeneral}
	\partial\psi_\eta/\partial\theta=\pare{\sqrt{q}\,\eta+\xi}\cdot y'(\theta)=0
	\AND
	\partial\psi_\eta/\partial\vT_{\xi}=	\xi^{\perp}\cdot y(\theta)	=	0,
\end{equation}
or equivalently,
\begin{equation}\label{eq:stationaryGeneral2}
	\xi=\pm y(\theta)/|y(\theta)|
	\AND
	\pare{\sqrt{q}\,\eta+\xi}\cdot y'(\theta)=0
	.
\end{equation}
Let $\cS=\cS_{\Omega,\eta}$ be the set of all stationary points $(\theta,\vT_{\xi})$ of the phase function $\psi_\eta$ for a fixed $\eta$.
We assume for the time being that $\cS$ is a nonempty finite set and that all elements in $\cS$ are simple stationary points. The latter is satisfied if and only if $\det D^2\psi_\eta(\theta,\vT_\xi)\neq 0$ for all $(\theta,\vT_{\xi})\in\cS$, where $D^2\psi_\eta$ denotes the Hessian
\begin{equation}\label{eq:Hessian}
	D^2\psi_\eta(\theta,\vT_{\xi})=\begin{bmatrix}
		\pare{\sqrt{q}\,\eta+\xi}\cdot y''(\theta)& \xi^{\perp}\cdot y'(\theta)
		\\
		\xi^{\perp}\cdot y'(\theta)&-\xi\cdot y(\theta)
	\end{bmatrix}.
\end{equation}
Thanks to the periodicity of all the involved functions, the leading order contribution $\Int(k;\eta;\Omega)$, as $k \rightarrow \infty$,  comes from the stationary points (see, \cite[Section 8.4]{BleHan86})
\begin{equation}\label{eq:asympGeneral}
	\Int(k;\eta;\Omega)=
	\frac{	2 \pi}{k} \sum_{
		(\theta,\vT_\xi)\in\cS
	} \text{\small$\frac{\phi(\vT_\xi)\,\Psi_\eta(\theta,\vT_\xi)}{\abs{\det D^2\psi_\eta(\theta,\vT_\xi)}^{1/2}}$}
	e^{\frac{i\pi}4\sgn D^2\psi_\eta(\theta,\vT_\xi)}e^{ik\psi_\eta(\theta,\vT_\xi)}
	+o\Pare{\frac{1}{k}},\quad\mbox{as $k\to\infty$},
\end{equation}
where $\sgn M$ is the signature of a matrix $M$, namely, the difference between the number of positive and negative eigenvalues of $M$.

In addition, we can differentiate \eqref{eq:IntFinal} with respect to $\vT_{\eta}$ and obtain for all $\eta\in \Ss^1$ that
\begin{equation*}
	\Int^{(1)}(k;\eta;\Omega) :=\int_{-\pi}^{\pi} \int_{-\pi}^{\pi}
	\Pare{\im k\sqrt{q}\eta^{\perp}\cdot y(\theta)\Psi_\eta(\theta,\vT_{\xi})
		-\sqrt{q}\eta\cdot y'(\theta)}\phi(\xi)\,e^{\im k\,\psi_\eta(\theta,\vT_{\xi})} ~d\vT_{\xi}\,d\theta=0.
\end{equation*}
Similar to \eqref{eq:asympGeneral}, we also have the following asymptotics for $\Int^{(1)}(k)$ as $k\to\infty$:
\begin{equation*}
	\Int^{(1)}(k;\eta;\Omega)=
	2 \sqrt{q} \pi \im \sum_{
		(\theta,\vT_\xi)\in\cS
	} \text{\small$\frac{\phi(\vT_\xi)\,\Psi_\eta(\theta,\vT_\xi)}{\abs{\det D^2\psi_\eta(\theta,\vT_\xi)}^{1/2}}$}\eta^{\perp}\cdot y(\theta)
	e^{\frac{i\pi}4\sgn D^2\psi_\eta(\theta,\vT_\xi)}e^{ik\psi_\eta(\theta,\vT_\xi)}
	+o\pare{1}.
\end{equation*}
Following this idea, we can differentiate \eqref{eq:IntFinal} $N$ times and denote the resulting quantity $\Int^{(N)}(k)$. By doing so we obtain, for each $N\in\mathbb{N}$ and $\eta\in\Ss^1$, that
\begin{equation}\label{eq:asympGen_diffeta}
	\begin{split}
		\Int^{(N)}(k;\eta;\Omega)=
		&\frac{	2 \pi}{k}\pare{\im k\sqrt{q}}^N  \sum_{
			(\theta,\vT_\xi)\in\cS
		} \text{\small$\frac{\phi(\vT_\xi)\,\Psi_\eta(\theta,\vT_\xi)}{\abs{\det D^2\psi_\eta(\theta,\vT_\xi)}^{1/2}}$}\Pare{\eta^{\perp}\cdot y(\theta)}^N
		e^{\frac{i\pi}4\sgn D^2\psi_\eta(\theta,\vT_\xi)}e^{ik\psi_\eta(\theta,\vT_\xi)}
		\\&\quad+o\pare{k^{N-1}},
		\qquad\mbox{as $k\to\infty$}.
	\end{split}
\end{equation}

The main proofs of this paper are based on the analysis of the leading terms in \eqref{eq:asympGeneral}, or more generally, those in \eqref{eq:asympGen_diffeta}. For the appropriate non-circular domains we will show that they cannot be identically zero for all $\eta\in\Ss^1$, which in turn contradicts the existence of infinitely many non-scattering (Herglotz) wave numbers.

We finish this section by pointing out a simple fact that will be used later.
\begin{lem}\label{lem:Psi=0}
Given $\eta \in \Ss^1$ let $\theta,\vT_\xi$ satisfy $\partial\psi_{\eta}/\partial\theta=0$ as in \eqref{eq:stationaryGeneral}. Then $\Psi_{\eta}(\theta,\vT_\xi)=0$ if and only if $y'(\theta)=0$.
\end{lem}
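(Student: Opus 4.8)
\textbf{Proof strategy for Lemma~\ref{lem:Psi=0}.}

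The plan is to work directly from the two defining quantities in \eqref{eq:psi}: the condition $\partial\psi_\eta/\partial\theta = (\sqrt q\,\eta+\xi)\cdot y'(\theta)=0$ and the target $\Psi_\eta(\theta,\vT_\xi) = -(\sqrt q\,\eta-\xi)\cdot y'^\perp(\theta)$. The observation driving the argument is that $\partial\psi_\eta/\partial\theta$ pairs $\sqrt q\,\eta+\xi$ with $y'$, while $\Psi_\eta$ pairs $\sqrt q\,\eta-\xi$ with $y'^\perp$; since $\{y'(\theta), y'^\perp(\theta)\}$ is an orthogonal basis of $\RR^2$ whenever $y'(\theta)\neq 0$, knowing the $y'$-component of one vector and the $y'^\perp$-component of another is not by itself decisive, so I would instead decompose both $\sqrt q\,\eta$ and $\xi$ in this basis.

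First I would note the trivial direction: if $y'(\theta)=0$ then $y'^\perp(\theta)=0$ as well, so $\Psi_\eta(\theta,\vT_\xi) = -(\sqrt q\,\eta-\xi)\cdot 0 = 0$ immediately. For the converse, assume $y'(\theta)\neq 0$ and suppose, for contradiction, that $\Psi_\eta(\theta,\vT_\xi)=0$, i.e. $(\sqrt q\,\eta-\xi)\cdot y'^\perp(\theta)=0$. Adding and subtracting this with the stationarity relation $(\sqrt q\,\eta+\xi)\cdot y'(\theta)=0$, I would write, in the orthogonal basis $\{y'/|y'|,\, y'^\perp/|y'|\}$, the decompositions of $\sqrt q\,\eta$ and of $\xi$. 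From $(\sqrt q\,\eta+\xi)\cdot y' = 0$ we learn that the $y'$-components of $\sqrt q\,\eta$ and $\xi$ are negatives of each other; from $(\sqrt q\,\eta-\xi)\cdot y'^\perp = 0$ we learn that the $y'^\perp$-components of $\sqrt q\,\eta$ and $\xi$ are equal. Writing $\sqrt q\,\eta = a\,\hat u + b\,\hat v$ and $\xi = -a\,\hat u + b\,\hat v$ where $\hat u = y'/|y'|$, $\hat v = y'^\perp/|y'|$, and using $|\xi|=1$, $|\sqrt q\,\eta| = \sqrt q$, I get $a^2 + b^2 = q$ and $a^2 + b^2 = 1$ simultaneously, forcing $q=1$, which contradicts the standing assumption that $q$ is a positive constant different from $1$. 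Hence $\Psi_\eta(\theta,\vT_\xi)\neq 0$, completing the converse.

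I expect no real obstacle here — the lemma is essentially a two-line linear-algebra computation exploiting $q\neq 1$ — but the one point to handle carefully is making explicit that $y'(\theta)\neq 0$ guarantees $\{y'(\theta), y'^\perp(\theta)\}$ is a genuine basis, which is immediate from $x\cdot x^\perp = 0$ and $|x^\perp| = |x|$ for the perpendicular operation as defined in the Preliminaries. An alternative, even shorter, phrasing avoids the explicit coefficients: from the two orthogonality relations one gets $2\sqrt q\,\eta\cdot y'(\theta) = (\sqrt q\,\eta+\xi)\cdot y'(\theta) + (\sqrt q\,\eta - \xi)\cdot y'(\theta)$ — but this does not quite close, so I would keep the basis-decomposition route, which directly surfaces the clean equation $q = 1$. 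Either way the proof is complete in a few lines.
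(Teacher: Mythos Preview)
Your proposal is correct and takes essentially the same approach as the paper. The paper compresses your basis decomposition into a single line: assuming $y'(\theta)\neq 0$, the two orthogonality conditions force $\sqrt q\,\eta+\xi$ parallel to $y'^\perp$ and $\sqrt q\,\eta-\xi$ parallel to $y'$, hence $(\sqrt q\,\eta+\xi)\cdot(\sqrt q\,\eta-\xi)=q-1=0$, contradicting $q\neq 1$ --- which is precisely your $a^2+b^2=q$ versus $a^2+b^2=1$ computation in slightly slicker packaging.
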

\begin{proof} Notice from \eqref{eq:psi} and \eqref{eq:stationaryGeneral} that $\partial\psi_{\eta}/\partial\theta=\Psi_{\eta}=0$ if and only if $y'(\theta)=0$ or $(\sqrt{q}\,\eta+\xi)\cdot (\sqrt{q}\,\eta-\xi)=q-1=0$. The latter contradicts the assumption that $q\neq 1$. The proof is complete.
\end{proof}

\section{Ellipses}
Up to a rotational change of coordinates (which our results are invariant under) an ellipse $\Omega$, centered at the origin, is given by
\begin{equation}\label{eq:ellipse}
	\Omega=\Omega_{a,b}= \{ (x_1,x_2):~ (x_1/a)^2+ (x_2/b)^2<1\},
\end{equation}
with $0<b<a$.
We apply the parameterization $y=(y_1,y_2)^T= (a\cos\theta,b  \sin \theta)^T$ for $\partial\Omega$.
Denote $s:=b/a\in(0,1)$.
Then the conditions \eqref{eq:stationaryGeneral} for a stationary point $(\theta,\vT_{\xi})\in\cS=\cS_{\Omega,\eta}$ read
\begin{equation*}
	-(\sqrt{q}\,\eta_1+\xi_1)\sin \theta
	+s\, (\sqrt{q} \,\eta_2+\xi_2) \cos\theta =0
	\AND
	- \xi_2 \cos\theta+ s\, \xi_1 \sin \theta=0,
\end{equation*}
which, by substitution of the second identity into the first one, is equivalent to
\begin{equation}\label{eq:critEll}
	\sqrt{q}\,(\eta_1\,\xi_2-s^2\,\eta_2\,\xi_1)+(1-s^2)\xi_1\xi_2=0 \AND
	- \xi_2 \cos\theta+ s\, \xi_1 \sin \theta=0.
\end{equation}

The following result is proven in \cite[Lemma 7]{VogX20}.
\begin{lem}\label{lem:4critEll}
	If $s^2>1/(1+\sqrt{q})$, then for each $\eta\in \Ss^1$, there are exactly two solutions $\vT_\xi=\Cp_j\vT_\eta$, $j=1,2$, to the first equation of \eqref{eq:critEll}. These may be ordered to satisfy \begin{equation}\label{eq:critLocEll}
		\mbox{$\sgn\cos\vT_\eta=\sgn\cos\Cp_1\vT_\eta=-\sgn\cos\Cp_2\vT_\eta$,\qquad $\sgn\sin\vT_\eta=\sgn\sin\Cp_1\vT_\eta=-\sgn\sin\Cp_2\vT_\eta$.}
	\end{equation}
	With this ordering,
	\begin{equation}\label{eq:crit0piEll}
		\Cp_1\vT_\eta=\vT_\eta	\qquad\text{if and only if}\qquad \Cp_2\vT_\eta=\vT_{\eta}+\pi
		\qquad\text{if and only if}\qquad \vT_\eta\in\{0,\pi/2,\pi,3\pi/2\}.
	\end{equation}
	Moreover, we also have that $|\cos\vT_\eta| \leq\left|\cos \Cp_1\vT_\eta\right|$, and
	\begin{equation*}
		\begin{cases}
			|\cos\vT_\eta| \leq\left|\cos \Cp_2\vT_\eta\right| \leq|\cos\vT_\eta|/s^2&\mbox{if $q>1$},
			\\
			|\cos\vT_\eta|\ge \left|\cos \Cp_2\vT_\eta\right| &\mbox{if $0<q<1$},
		\end{cases}
	\end{equation*}
	where the leftmost equal signs hold if and only if $\vT_\eta\in\{0,\pi/2,\pi,3\pi/2\}$.
	As a consequence, we have in total four stationary points, i.e., solutions to \eqref{eq:critEll}, given by
	\begin{equation*}
		(\Cp_{j,s}\vT_\eta\,,\, \Cp_j\vT_\eta)
		\AnD(\Cp_{j,s}\vT_\eta+\pi\,,\, \Cp_j\vT_\eta),\qquad j=1,2,
	\end{equation*}
	where
	\begin{equation*}
		(\cos\Cp_{j,s}\vT_\eta\,,\,\sin\Cp_{j,s}\vT_\eta) =\frac{(s\cos\Cp_{j}\vT_\eta\,,\,\sin\Cp_{j}\vT_\eta)}{\sqrt{s^2\cos^2\Cp_{j}\vT_\eta+\sin^2\Cp_{j}\vT_\eta}}.
	\end{equation*}
\end{lem}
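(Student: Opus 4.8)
The plan is to analyze the first equation of \eqref{eq:critEll}, namely $\sqrt q(\eta_1\xi_2 - s^2\eta_2\xi_1) + (1-s^2)\xi_1\xi_2 = 0$, as an equation relating the angle $\vT_\xi$ to the fixed angle $\vT_\eta$. First I would substitute $\xi_1 = \cos\vT_\xi$, $\xi_2 = \sin\vT_\xi$, $\eta_1 = \cos\vT_\eta$, $\eta_2 = \sin\vT_\eta$, and divide through by $\cos^2\vT_\xi$ (treating the degenerate cases $\cos\vT_\xi = 0$ separately) to obtain a quadratic equation in $\tan\vT_\xi$ with coefficients depending on $\vT_\eta$; alternatively, rewrite it using the identities $\xi_1\xi_2 = \tfrac12\sin 2\vT_\xi$ and $\eta_1\xi_2 - s^2\eta_2\xi_1 = \tfrac{1-s^2}{2}\sin(\vT_\xi+\vT_\eta) + \tfrac{1+s^2}{2}\sin(\vT_\xi - \vT_\eta)$ to see it as a trigonometric equation of degree two in $\vT_\xi$ modulo $\pi$. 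Either way, a degree-two equation in $\tan\vT_\xi$ generically has two roots in $(-\pi/2,\pi/2]$, hence two solutions $\vT_\xi \in \RR/(\pi\ZZ)$; lifting to $\RR/(2\pi\ZZ)$ and using the sign constraints will pin down the representatives. The key quantitative point — that there are exactly two real roots and not zero — is where the hypothesis $s^2 > 1/(1+\sqrt q)$ enters: I would compute the discriminant of the quadratic in $\tan\vT_\xi$ and show that the assumed lower bound on $s^2$ forces it to be nonnegative for every $\vT_\eta$ (with equality exactly at $\vT_\eta \in \{0,\pi/2,\pi,3\pi/2\}$, which will also give the "leftmost equal signs" claims).

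Next I would establish the sign pattern \eqref{eq:critLocEll}. The cleanest route is to observe that the first equation of \eqref{eq:critEll}, viewed as a relation $G(\vT_\xi;\vT_\eta)=0$, is invariant under $(\vT_\xi,\vT_\eta)\mapsto(\vT_\xi+\pi,\vT_\eta)$ and also under the reflections $\vT\mapsto -\vT$ and $\vT\mapsto \pi-\vT$ applied simultaneously to $\xi$ and $\eta$; combining these symmetries with a continuity/degree argument as $\vT_\eta$ varies shows that one root, call it $\Cp_1\vT_\eta$, lies in the same quadrant as $\vT_\eta$ (matching both sign conditions), while the other, $\Cp_2\vT_\eta$, lies in the antipodal quadrant. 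For \eqref{eq:crit0piEll}: at $\vT_\eta \in \{0,\pi/2,\pi,3\pi/2\}$ one checks directly from \eqref{eq:critEll} that $\vT_\xi = \vT_\eta$ (equivalently $\vT_\eta+\pi$) is a double root; conversely, if $\Cp_1\vT_\eta = \vT_\eta$ then plugging $\xi = \eta$ into $G$ gives $\sqrt q(1-s^2)\eta_1\eta_2 + (1-s^2)\eta_1\eta_2 = (1-s^2)(1+\sqrt q)\eta_1\eta_2 = 0$, forcing $\eta_1\eta_2 = 0$.

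For the comparison inequalities on $|\cos\Cp_j\vT_\eta|$, I would solve $G = 0$ explicitly for $\cos^2\vT_\xi$ in terms of $\cos^2\vT_\eta$. Writing $c = \cos^2\vT_\eta$ and $c_j = \cos^2\Cp_j\vT_\eta$, the equation $G=0$ (after squaring and using $\sin^2 = 1 - \cos^2$) becomes a quadratic in $c_j$ whose two roots are $c_1, c_2$; by Vieta's formulas I can read off $c_1 + c_2$ and $c_1 c_2$ as rational functions of $c$, $s$, $q$, and then the stated bounds $c \le c_1$ and (for $q>1$) $c \le c_2 \le c/s^2$, or (for $q<1$) $c \ge c_2$, reduce to elementary inequalities in $c \in [0,1]$, $s \in (0,1)$ — again using $s^2 > 1/(1+\sqrt q)$ to control signs of the relevant factors. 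Finally the count of four stationary points follows by coupling each admissible $\vT_\xi = \Cp_j\vT_\eta$ with the solutions of the second equation $-\xi_2\cos\theta + s\xi_1\sin\theta = 0$, which determines $\theta$ modulo $\pi$, i.e. exactly the two values $\Cp_{j,s}\vT_\eta$ and $\Cp_{j,s}\vT_\eta + \pi$ given by the stated normalization of $(\cos\Cp_{j,s}\vT_\eta, \sin\Cp_{j,s}\vT_\eta)$.

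The main obstacle is not any single step but the bookkeeping: keeping the branch labels $\Cp_1,\Cp_2$ consistent across the discriminant computation, the symmetry/continuity argument, and the Vieta relations, so that "the" ordering satisfying \eqref{eq:critLocEll} is the same one for which all the subsequent inequalities hold. The discriminant sign analysis — verifying that $s^2 > 1/(1+\sqrt q)$ is exactly what guarantees two real roots uniformly in $\vT_\eta$ — is the one genuinely load-bearing inequality and should be isolated and checked carefully; everything else is continuity, symmetry, and elementary algebra. Since this is cited as \cite[Lemma 7]{VogX20}, I would in practice reproduce only the discriminant and Vieta computations in detail and refer to the earlier paper for the routine verifications.
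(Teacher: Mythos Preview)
The paper does not actually prove this lemma; it simply cites \cite[Lemma~7]{VogX20}. So there is no ``paper's own proof'' to compare against, and your final remark that one would ``refer to the earlier paper for the routine verifications'' is in fact exactly what the paper does (for \emph{all} the verifications).

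That said, your proposed argument contains a genuine error that would need repair if you tried to carry it out. You claim the first equation of \eqref{eq:critEll} is invariant under $(\vT_\xi,\vT_\eta)\mapsto(\vT_\xi+\pi,\vT_\eta)$ and hence reduces, after dividing by $\cos^2\vT_\xi$, to a quadratic in $\tan\vT_\xi$. It is not: under $\xi\mapsto-\xi$ the linear part $\sqrt q(\eta_1\xi_2-s^2\eta_2\xi_1)$ changes sign while the quadratic part $(1-s^2)\xi_1\xi_2$ does not, so the equation is \emph{not} $\pi$-periodic in $\vT_\xi$ and cannot be written as a polynomial in $\tan\vT_\xi$ alone. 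Concretely, dividing by $\cos\vT_\xi$ leaves a stray $\sin\vT_\xi$ (or $\sec\vT_\xi$) that does not reduce; if you eliminate it by squaring you obtain a genuine \emph{quartic} in $\tan\vT_\xi$ (with two extraneous roots coming from the wrong sign of $\cos\vT_\xi$). This invalidates the ``two roots in $\RR/(\pi\ZZ)$, then lift'' count, the symmetry argument you give for \eqref{eq:critLocEll}, and the direct Vieta reading of $c_1+c_2$, $c_1c_2$ from a quadratic.

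The fix is not hard: work with the equation as a trigonometric polynomial of degree two on $\Ss^1$ (so at most four zeros, and an intermediate-value/monotonicity argument gives exactly two under the hypothesis $s^2>1/(1+\sqrt q)$), and use the correct symmetry $(\vT_\xi,\vT_\eta)\mapsto(\vT_\xi+\pi,\vT_\eta+\pi)$ --- which is precisely the relation $\Cp_j(\vT_\eta+\pi)=\Cp_j\vT_\eta+\pi$ recorded in Corollary~\ref{cor:4critEll} --- together with the sign identity $\eta_1/\xi_1$, $\eta_2/\xi_2>0$ that falls out of the equation once one isolates $\xi_2/\xi_1$. Your computation for \eqref{eq:crit0piEll} (plugging $\xi=\eta$ to get $(1-s^2)(1+\sqrt q)\eta_1\eta_2=0$) is correct and survives unchanged.
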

The following is a direct consequence of Lemma~\ref{lem:4critEll}.
\begin{cor}\label{cor:4critEll}
	Under the same assumptions and notations as in Lemma~\ref{lem:4critEll}, then
	\begin{equation*}
		\mbox{$\Cp_{j}(\vT_{\eta}+\pi)=\Cp_{j}\vT_{\eta}+\pi$, \qquad$j=1,2$.}
	\end{equation*}
	In addition, using $\xi_1=\cos\Cp_{j}\vT_{\eta}$ and $\xi_2=\sin\Cp_{j}\vT_{\eta}$, we get
	\begin{equation*}
		|\eta_1|/|\xi_1|-s^2|\eta_2|/|\xi_2|=(-1)^j(1-s^2)/\sqrt{q},\qquad \mbox{whenever $\eta_1\eta_2\neq 0$}.
	\end{equation*}
	As a consequence,
	\begin{equation*}
		\mbox{$|\cos \Cp_2\vT_\eta|<|\cos \Cp_1\vT_\eta|$\qquad when $\eta_1\eta_2\neq 0$.}
	\end{equation*}
\end{cor}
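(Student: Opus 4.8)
The plan is to establish the three assertions in order, using only the first equation in \eqref{eq:critEll}, namely $\sqrt{q}\,(\eta_1\xi_2-s^2\eta_2\xi_1)+(1-s^2)\xi_1\xi_2=0$, together with the sign normalization \eqref{eq:critLocEll} and the degenerate case \eqref{eq:crit0piEll} recorded in Lemma~\ref{lem:4critEll}.

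For the identity $\Cp_j(\vT_\eta+\pi)=\Cp_j\vT_\eta+\pi$, I would first observe that the left-hand side of the defining equation is invariant under the simultaneous sign change $(\eta,\xi)\mapsto(-\eta,-\xi)$; hence $\xi$ solves it for $\eta$ if and only if $-\xi$ solves it for $-\eta$. Since $-\eta$ has angular coordinate $\vT_\eta+\pi$ and $-\xi$ (with $\vT_\xi=\Cp_j\vT_\eta$) has angular coordinate $\Cp_j\vT_\eta+\pi$, this yields the set equality $\{\Cp_1(\vT_\eta+\pi),\Cp_2(\vT_\eta+\pi)\}=\{\Cp_1\vT_\eta+\pi,\Cp_2\vT_\eta+\pi\}$. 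To match the labels, I would check that $\Cp_1\vT_\eta+\pi$ satisfies the normalization \eqref{eq:critLocEll} attached to the index $1$ for the vector $\vT_\eta+\pi$: indeed $\cos(\Cp_1\vT_\eta+\pi)=-\cos\Cp_1\vT_\eta$ has the same sign as $\cos(\vT_\eta+\pi)=-\cos\vT_\eta$, and likewise for the sine. The exceptional case $\vT_\eta\in\{0,\pi/2,\pi,3\pi/2\}$, where the sign normalization degenerates, is handled directly by \eqref{eq:crit0piEll}, since $\vT_\eta+\pi$ lies in the same set.

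For the second formula, assume $\eta_1\eta_2\neq0$. Then \eqref{eq:critLocEll} forces $\cos\Cp_j\vT_\eta\neq0$ and $\sin\Cp_j\vT_\eta\neq0$, so $\xi_1\xi_2\neq0$ and one may divide the defining equation by $\sqrt{q}\,\xi_1\xi_2$ to get $\eta_1/\xi_1-s^2\,\eta_2/\xi_2=-(1-s^2)/\sqrt{q}$. By \eqref{eq:critLocEll} again, for $j=1$ the ratios $\eta_1/\xi_1$ and $\eta_2/\xi_2$ are both positive, and for $j=2$ both negative; replacing the ratios by their absolute values and absorbing this sign gives $|\eta_1|/|\xi_1|-s^2|\eta_2|/|\xi_2|=(-1)^j(1-s^2)/\sqrt{q}$.

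The last assertion then follows quickly. Write $u_j:=|\cos\Cp_j\vT_\eta|\in(0,1)$, so that $|\sin\Cp_j\vT_\eta|=\sqrt{1-u_j^2}$, and set $g(u):=|\eta_1|/u-s^2|\eta_2|/\sqrt{1-u^2}$. The second formula reads $g(u_j)=(-1)^j(1-s^2)/\sqrt{q}$. Since $|\eta_1|,|\eta_2|>0$, the function $g$ is strictly decreasing on $(0,1)$; because $(1-s^2)/\sqrt{q}>0$ we have $g(u_2)>0>g(u_1)$, whence $u_2<u_1$, i.e.\ $|\cos\Cp_2\vT_\eta|<|\cos\Cp_1\vT_\eta|$. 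The only points that need care in the whole argument are the sign bookkeeping in the middle step and the verification that the chosen labeling is compatible with the $\pi$-shift in the first step; both are elementary, so I do not anticipate a genuine obstacle.
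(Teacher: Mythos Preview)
Your argument is correct. The paper does not spell out a proof of this corollary, stating only that it is a direct consequence of Lemma~\ref{lem:4critEll}; your verification is precisely the intended one, using the invariance of the first equation in \eqref{eq:critEll} under $(\eta,\xi)\mapsto(-\eta,-\xi)$ for the $\pi$-shift identity, the sign normalization \eqref{eq:critLocEll} to pass from $\eta_i/\xi_i$ to $|\eta_i|/|\xi_i|$, and a monotonicity argument for the final inequality.
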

\begin{rem}\label{rem:4critEll}
		We note that \eqref{eq:critLocEll} implies that $\Cp_{1}\vT_\eta$ and $\vT_\eta$ lie in the same quadrant (or axis) in the $\RR^2$ plane, while $\Cp_{2}\vT_\eta$ and $\vT_\eta+\pi$ also lie in the same quadrant (or axis). Moreover, for $\vT_{\eta}\in(0,\pi/2)$,  the conclusion $|\cos\vT_\eta| \leq\left|\cos \Cp_1\vT_\eta\right|$ in Lemma~\ref{lem:4critEll} implies that  $\Cp_{1}\vT_{\eta}<\vT_{\eta}$. Additionally, since $|\cos \Cp_2\vT_\eta|<|\cos \Cp_1\vT_\eta|$ by Corollary \ref{cor:4critEll}, we also infer that $\Cp_{1}\vT_{\eta}<\Cp_{2}\vT_{\eta}-\pi$.
\end{rem}
\begin{lem}\label{lem:critEll}
	With the same assumptions and notations as in Lemma~\ref{lem:4critEll}, the mappings $\Cp_{j}:\mathbb{R}/(2\pi\ZZ)\to\mathbb{R}/(2\pi\ZZ)$, $j=1,2$, are $C^1$ bijections, and they are locally strictly  increasing\footnote{The monotonicity holds when we take the domain and the range of $\Cp_{j}$ to be fixed $2\pi$-length intervals. In other words, $\Cp_{j}:(t_0,t_0+2\pi)\to (\theta_0+2\kappa\pi,\theta_0+2(\kappa+1)\pi))$ is strictly increasing for any $t_0\in\RR$ and any $\kappa\in\ZZ$, where $\theta_0\cong \Cp_{j}t_0$ is the solution to the first equation of \eqref{eq:critEll} as specified in Lemma~\ref{lem:4critEll}.}.
\end{lem}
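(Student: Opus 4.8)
The plan is to realise the first equation of \eqref{eq:critEll} as the zero set of the real-analytic function
\begin{equation*}
	G(\vT_\eta,\vT_\xi):=\sqrt{q}\,\Pare{\cos\vT_\eta\sin\vT_\xi-s^2\sin\vT_\eta\cos\vT_\xi}+(1-s^2)\sin\vT_\xi\cos\vT_\xi
\end{equation*}
on the torus $\pare{\RR/(2\pi\ZZ)}^2$, and to apply the implicit function theorem to each of its two branches $\vT_\xi=\Cp_j\vT_\eta$, $j=1,2$, furnished by Lemma~\ref{lem:4critEll}. The single quantity that controls everything is the partial derivative $\partial_{\vT_\xi}G$ evaluated along a branch: once it is shown to be nonzero there, the implicit function theorem makes $\Cp_j$ locally $C^1$ (indeed real-analytic), and the sign of the slope $-\,\partial_{\vT_\eta}G\big/\partial_{\vT_\xi}G$ then settles the monotonicity.

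First I would record
\begin{equation*}
	\partial_{\vT_\xi}G=\sqrt{q}\,\Pare{\cos\vT_\eta\cos\vT_\xi+s^2\sin\vT_\eta\sin\vT_\xi}+(1-s^2)\cos 2\vT_\xi,
	\qquad
	\partial_{\vT_\eta}G=-\sqrt{q}\,\Pare{\sin\vT_\eta\sin\vT_\xi+s^2\cos\vT_\eta\cos\vT_\xi},
\end{equation*}
and then, eliminating $\sqrt{q}\,s^2\sin\vT_\eta$ between $G=0$ and the formula for $\partial_{\vT_\xi}G$ (the case $\cos\vT_\xi=0$ being checked by direct substitution), establish the reduction identity
\begin{equation*}
	\cos\vT_\xi\cdot\partial_{\vT_\xi}G=\sqrt{q}\,\cos\vT_\eta+(1-s^2)\cos^3\vT_\xi
	\qquad\text{whenever }G(\vT_\eta,\vT_\xi)=0.
\end{equation*}
On the branch $\vT_\xi=\Cp_1\vT_\eta$ we have $\sgn\cos\Cp_1\vT_\eta=\sgn\cos\vT_\eta$ by \eqref{eq:critLocEll}, so both terms on the right share the sign of $\cos\Cp_1\vT_\eta$ and hence $\partial_{\vT_\xi}G>0$ there; in the remaining case $\cos\vT_\eta=\cos\Cp_1\vT_\eta=0$ one evaluates $\partial_{\vT_\xi}G$ at the axis point and gets $\sqrt{q}\,s^2-(1-s^2)>0$, which is exactly the standing hypothesis $s^2>1/(1+\sqrt{q})$. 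On the branch $\vT_\xi=\Cp_2\vT_\eta$ the two terms carry opposite signs, but $\abs{\cos\Cp_2\vT_\eta}\le1$ together with the size comparison between $\abs{\cos\vT_\eta}$ and $\abs{\cos\Cp_2\vT_\eta}$ from Lemma~\ref{lem:4critEll} and the bound $1-s^2<\sqrt{q}\,s^2$ show that the $\sqrt{q}\cos\vT_\eta$ term dominates, so $\partial_{\vT_\xi}G<0$ there. Thus $\partial_{\vT_\xi}G\neq0$ on each branch, the implicit function theorem makes each $\Cp_j$ a $C^1$ (indeed real-analytic) function near every point, and since the two branches are globally separated in the torus by \eqref{eq:critLocEll}--\eqref{eq:crit0piEll} these local pieces patch into a single $C^1$ map $\Cp_j:\RR/(2\pi\ZZ)\to\RR/(2\pi\ZZ)$. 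Finally, reading off from \eqref{eq:critLocEll} that the two terms of $\partial_{\vT_\eta}G$ share a sign on each branch and are not both zero, one gets $\partial_{\vT_\eta}G<0$ on branch $1$ and $\partial_{\vT_\eta}G>0$ on branch $2$, whence $\Cp_j'=-\,\partial_{\vT_\eta}G\big/\partial_{\vT_\xi}G>0$ on both; this is the asserted local strict increase.

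It then remains to promote each $\Cp_j$ to a bijection. Being continuous and locally strictly increasing, $\Cp_j$ lifts to a strictly increasing $\CP_j:\RR\to\RR$ with $\CP_j(t+2\pi)=\CP_j(t)+2\pi d$ for some integer $d\ge1$, and it suffices to see $d=1$. By \eqref{eq:crit0piEll}, $\Cp_1$ fixes each of $0,\pi/2,\pi,3\pi/2$, and by the sign relations \eqref{eq:critLocEll} it maps each open arc between consecutive such points into itself; hence $\CP_1$ carries $[0,2\pi]$ onto $[0,2\pi]$, i.e.\ $d=1$. For $\Cp_2$ the same two facts give $\Cp_2(t)=t+\pi$ at each of the four points and send each open arc onto the diametrically opposite one, so again $d=1$. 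Therefore $\Cp_1,\Cp_2$ are $C^1$ bijections of $\RR/(2\pi\ZZ)$, strictly increasing in the sense described.

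The main obstacle is the sign of $\partial_{\vT_\xi}G$ along the second branch: there the reduction identity presents $\cos\Cp_2\vT_\eta\cdot\partial_{\vT_\xi}G$ as a sum of two terms of opposite sign, of magnitudes $\sqrt{q}\,\abs{\cos\vT_\eta}$ and $(1-s^2)\abs{\cos\Cp_2\vT_\eta}^3$, and one must verify $\sqrt{q}\,\abs{\cos\vT_\eta}>(1-s^2)\abs{\cos\Cp_2\vT_\eta}^3$. This is precisely where one needs the two-sided comparison of $\abs{\cos\vT_\eta}$ with $\abs{\cos\Cp_2\vT_\eta}$ from Lemma~\ref{lem:4critEll} together with the inequality $1-s^2<\sqrt{q}\,s^2$ equivalent to the standing hypothesis $s^2>1/(1+\sqrt{q})$; everything else (the reduction identity, the axis-point evaluations, and the degree count) is routine once this estimate is in hand.
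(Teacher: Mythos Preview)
Your argument is correct and follows essentially the same route as the paper: implicit differentiation of the defining relation, followed by an algebraic simplification (using $G=0$) to read off the sign of $\partial_{\vT_\xi}G$ and of $\Cp_j'$, with axis points handled separately. The one noteworthy difference is the choice of simplification. You reduce $\cos\vT_\xi\cdot\partial_{\vT_\xi}G$ to $\sqrt{q}\cos\vT_\eta+(1-s^2)\cos^3\vT_\xi$, which on branch~2 presents two terms of opposite sign and therefore requires the size comparison $|\cos\Cp_2\vT_\eta|\le|\cos\vT_\eta|/s^2$ from Lemma~\ref{lem:4critEll} together with $1-s^2<\sqrt{q}s^2$. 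The paper instead rewrites the full derivative as
\[
\frac{d\Cp_j\vT_\eta}{d\vT_\eta}=\frac{s^2\eta_1\xi_1+\eta_2\xi_2}{\dfrac{\eta_1}{\xi_1}\,\xi_2^2+s^2\,\dfrac{\eta_2}{\xi_2}\,\xi_1^2}
\qquad(\eta_1\eta_2\ne0),
\]
in which, thanks to \eqref{eq:critLocEll}, numerator and denominator manifestly share a sign on each branch; positivity at generic points then follows from the sign relations alone, and the hypothesis $s^2>1/(1+\sqrt{q})$ is invoked only at the four axis points. Your degree argument for bijectivity is a touch more explicit than the paper's, which simply appeals to monotonicity together with \eqref{eq:crit0piEll} for surjectivity.
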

\begin{proof}
	It can be calculated straight forwardly from \eqref{eq:critEll} that
	\begin{equation} \label{eq:firstder}
		\frac{d\Cp_{j}\vT_{\eta}}{d\vT_{\eta}}=\frac{\sqrt{q}\,(s^2\,\eta_1\,\xi_1+\eta_2\,\xi_2)}{\sqrt{q}\,(\eta_1\,\xi_1+s^2\,\eta_2\,\xi_2)+(1-s^2)(\xi_1^2-\xi_2^2)},
	\end{equation}
	where $\xi_1=\cos\Cp_{j}\vT_{\eta}$ and $\xi_2=\sin\Cp_{j}\vT_{\eta}$. When
	$\vT_{\eta} \notin \{0,\pi/2,\pi,3\pi/2\}$, that is when $\eta_1\eta_2\ne 0$ ( and thus $\xi_1\xi_2\ne 0$), then \eqref{eq:firstder} may be rewritten
	\begin{equation} \label{eq:firstder2}
		\frac{d\Cp_{j}\vT_{\eta}}{d\vT_{\eta}}=
		\frac{s^2\,\eta_1\,\xi_1+\eta_2\,\xi_2}{\frac{\eta_1}{\xi_1}\,\xi_2^2+s^2\,\frac{\eta_2}{\xi_2}\,\xi_1^2},
		\qquad j=1,2.
	\end{equation}
	For $\vT_{\eta} \in \{0,\pi/2,\pi,3\pi/2\}$ \eqref{eq:firstder} gives
	\begin{equation*}
		\Cp_j'0=\Cp_j'\pi=\frac{s^2\sqrt{q}}{\sqrt{q}+(-1)^{j-1}(1-s^2)}
		\AND
		\Cp_j'\frac{\pi}{2}=\Cp_j'\frac{3\pi}{2}=\frac{\sqrt{q}}{s^2\sqrt{q}-(-1)^{j-1}(1-s^2)}.
	\end{equation*}
	Applying \eqref{eq:critLocEll} and the condition that $s^2>1/(1+\sqrt{q})$ we can deduce from \eqref{eq:firstder} and \eqref{eq:firstder2} that
	\begin{equation*}
		{d\Cp_{j}\vT_{\eta}}/{d\vT_{\eta}}>0,\qquad\mbox{for all $\eta\in\Ss^1$ and both $j=1,2$.}
	\end{equation*}
	Due to the monotonicity, the surjectivity of $\Cp_j$ can be obtained from \eqref{eq:crit0piEll}.
\end{proof}

We are now ready to prove Theorem~\ref{thm:ellipse}.
\begin{proof}[Proof of Theorem~\ref{thm:ellipse}]
We prove the result by contradiction. Let $\phi$ be a $C^1$ function which is not identically zero. Assume that there are infinitely many wave numbers $k_n$, $n\in\NN$ such that \eqref{eq:OverDet} admits a solution $u_{k_n}$. Then, as explained before, $k_n^2$ are transmission eigenvalues (for \eqref{eq:ITEP}) and so  $k_n$ must accumulate at $\infty$.
Since  $e^2=1-s^2<\sqrt{q}/(1+\sqrt{q})$ implies that $s^2>1/(1+\sqrt{q})$  lemmata~\ref{lem:4critEll} ~and~\ref{lem:critEll} and Corollary~\ref{cor:4critEll} are all valid. Since $y(t+\pi)=-y(t)$, it follows immediately from the definition of the functions $\psi_\eta$ and $\Psi_\eta$ in \eqref{eq:psi} that
\begin{equation*}
	\{\psi_\eta,\Psi_\eta,D^2\psi_\eta\}(\theta+\pi,\vT_{\xi})=-\{\psi_\eta,\Psi_\eta,D^2\psi_\eta\}(\theta,\vT_{\xi}).
\end{equation*}
Consider the asymptotic formulas \eqref{eq:asympGen_diffeta} and recall from Lemma~\ref{lem:4critEll} that there are exactly four stationary points given by $(\theta,\theta_{\xi})=(\Cp_{j,s}\vT_\eta\,,\, \Cp_j\vT_\eta)$ and $(\theta,\theta_{\xi})=(\Cp_{j,s}\vT_\eta+\pi\,,\, \Cp_j\vT_\eta)$, $j=1,2$. We then obtain that, as $k=k_n\to\infty$,
\begin{equation}\label{eq:asympEll}
	\begin{split}
		&
		\sum_{j=1,2} \text{\small$\frac{\phi(\Cp_{j}\vT_\eta)\,\Psi_j(\vT_\eta)}{\abs{\det D^2\psi_j(\vT_\eta)}^{1/2}}$}
		\left[e^{\frac{i\pi}4\sgn D^2\psi_j(\vT_\eta)+ik\psi_j(\vT_\eta)}f_{\eta,j}^N- 		e^{-\frac{i\pi}4\sgn D^2\psi_j(\vT_\eta)-ik\psi_j(\vT_\eta)}(-f_{\eta,j})^N\right]\to 0,
	\end{split}
\end{equation}
for all $\eta\in\Ss^1$, where
\begin{equation*}
	\{\psi_j(\vT_\eta),\Psi_j(\vT_\eta),D^2\psi_j(\vT_\eta)\}=\{\psi_\eta,\Psi_\eta,D^2\psi_\eta\}(\Cp_{j,s}\vT_\eta,\Cp_j\vT_\eta),
\end{equation*}
and (originating from the $\eta^\perp\cdot y$ term)
\begin{equation*}
	f_{\eta,j}=\frac{b \sin(\Cp_j\vT_{\eta}-\vT_\eta)}{\sqrt{s^2\cos^2\Cp_{j}\vT_\eta+\sin^2\Cp_{j}\vT_\eta}}.
\end{equation*}
Here we have made use of the fact that $\det D^2\psi_j(\vT_\eta)\neq 0$ for $j=1,2$ and $\eta\in\Ss^1$ (see \cite[Lemma 8]{VogX20}). By extraction of a subsequence (depending on $\vT_\eta$) we may assume that
$$
e^{ik\psi_j(\vT_\eta)} \rightarrow z_j(\vT_\eta) \hbox { as } k=k_n \rightarrow \infty
$$
for $j=1,2$ where $|z_j(\vT_\eta)|=1$. From \eqref{eq:asympEll} with $0\le N\le 3$, we now conclude that
\begin{equation*}
		\begin{bmatrix}
			1&1&1&1\\a_1&a_2&a_3&a_4\\a_1^2&a_2^2&a_3^2&a_4^2\\a_1^3&a_2^3&a_3^3&a_4^3
		\end{bmatrix}
		\begin{bmatrix}
			t_1\\t_2\\t_3\\t_4
		\end{bmatrix}=0
		\end{equation*}
Here $a_1=-a_2=f_{\eta,1}$, $a_3=-a_4=f_{\eta,2}$ and
\begin{eqnarray*}
t_1=\frac{\phi(\Cp_{1}\vT_\eta)\,\Psi_1(\vT_\eta)}{\abs{\det D^2\psi_1(\vT_\eta)}^{1/2}}e^{\frac{i\pi}4\sgn D^2\psi_1(\vT_\eta)}z_1~, ~~~ t_2=-\frac{\phi(\Cp_{1}\vT_\eta)\,\Psi_1(\vT_\eta)}{\abs{\det D^2\psi_1(\vT_\eta)}^{1/2}}e^{\frac{-i\pi}4\sgn D^2\psi_1(\vT_\eta)}z_1^{-1} \\
t_3=\frac{\phi(\Cp_{2}\vT_\eta)\,\Psi_2(\vT_\eta)}{\abs{\det D^2\psi_2(\vT_\eta)}^{1/2}}e^{\frac{i\pi}4\sgn D^2\psi_2(\vT_\eta)}z_2~,~~~ t_4=-\frac{\phi(\Cp_{2}\vT_\eta)\,\Psi_2(\vT_\eta)}{\abs{\det D^2\psi_2(\vT_\eta)}^{1/2}}e^{-\frac{i\pi}4\sgn D^2\psi_2(\vT_\eta)}z_2^{-1}~.
\end{eqnarray*}
 We notice that $f_{\eta,j}=0$ if and only if $\eta_1\eta_2=0$. When $\eta_1 \eta_2\ne 0$ and $|f_{\eta,1}|\ne |f_{\eta,2}|$ the above Vandermonde matrix is invertible, and thus $t_1=t_2=t_3=t_4=0$, in other words one possibility for $\eta_1\eta_2\ne 0$ is
\begin{equation}\label{eq:proof0Ell}
|f_{\eta,1}|\ne |f_{\eta,2}| \AND	\phi(\Cp_{1}\vT_\eta)\Psi_1(\vT_\eta)=\phi(\Cp_{2}\vT_\eta)\Psi_2(\vT_\eta)=0,
\end{equation}
When $|f_{\eta,1}|= |f_{\eta,2}|$	and $\eta_1\eta_2\ne0$ the above Vandermonde matrix is not invertible (has rank 2) and we get $t_1+t_3=t_2+t_4=0$ or $t_1+t_4=t_2+t_3=0$. In either case the alternative to \eqref{eq:proof0Ell} for $\eta_1\eta_2\ne 0$ becomes
\begin{equation}\label{eq:prooff}
	|f_{\eta,1}|=|f_{\eta,2}|
	\AND
	\frac{|\phi(\Cp_{1}\vT_\eta)|\,|\Psi_1(\vT_\eta)|}{\abs{\det D^2\psi_1(\vT_\eta)}^{1/2}}
	=\frac{|\phi(\Cp_{2}\vT_\eta)|\,|\Psi_2(\vT_\eta)|}{\abs{\det D^2\psi_2(\vT_\eta)}^{1/2}}.
\end{equation}
The last statement can be rearranged as $|f_{\eta,1}|=|f_{\eta,2}|$ and
\begin{equation}\label{eq:proofIdEll1}
	|\phi(\Cp_{2}\vT_\eta)|^2=\Gp(\vT_{\eta})|\phi(\Cp_{1}\vT_\eta)|^2\qquad\text{with}\quad
	\Gp(\vT_{\eta})=\frac{\abs{\det D^2\psi_2(\vT_\eta)}}{\abs{\det D^2\psi_1(\vT_\eta)}}\frac{\Psi_1(\vT_\eta)^2}{\Psi_2(\vT_\eta)^2}.
\end{equation}

Here we used that $\Psi_j(\vT_\eta)\neq0$ for any $\vT_{\eta}$. This follows immediately from Lemma~\ref{lem:Psi=0}  since $y'(\theta)=a(-\sin\theta,s\cos\theta)\neq 0$ for any $\theta$.
Based on \eqref{eq:proof0Ell} and \eqref{eq:proofIdEll1}, and the fact that $\Psi_j(\vT_\eta)\neq0$ for any $\vT_{\eta}$, we conclude that in both cases, whether $|f_{\eta,1}|\neq |f_{\eta,2}|$ or $|f_{\eta,1}|= |f_{\eta,2}|$, one has
\begin{equation}\label{eq:proofIdEll}
	|\phi(\Cp_{2}\vT_\eta)|^2=\Gp(\vT_{\eta})|\phi(\Cp_{1}\vT_\eta)|^2\qquad\text{with}\quad
	\Gp(\vT_{\eta})=\frac{\abs{\det D^2\psi_2(\vT_\eta)}}{\abs{\det D^2\psi_1(\vT_\eta)}}\frac{\Psi_1(\vT_\eta)^2}{\Psi_2(\vT_\eta)^2}.
\end{equation}

Moreover, due to the continuity of $\phi$, $\Gp$, and $\Cp_j$, we infer that \eqref{eq:proofIdEll} holds for all $\vT_\eta$, and not just when $\eta_1\eta_2\ne 0$. The argument used to arrive at \eqref{eq:proof0Ell} \eqref{eq:proofIdEll1} will be developed further in Section \ref{sec:starshape} where we study more general star-shaped domains (see Lemma~\ref{lem:asympStarBig} and Corollary~\ref{cor:phiStarBig})).

We obtain by direct calculations that
\begin{equation*}
	\Gp(0)=\Gp(\pi)=\frac{\sqrt{q}-1+s^2}{\sqrt{q}+1-s^2}\frac{(\sqrt{q}-1)^2}{(\sqrt{q}+1)^2}<1.
\end{equation*}
Then by continuity we have that
\begin{equation}\label{eq:G<1}
	\Gp(\vT_{\eta})<1,\qquad \mbox{for $j=1,2$ and $\vT_\eta \in (-t_1,t_1)\cup (\pi -t_1,\pi +t_1)$},
\end{equation}
with some $t_1\in(0,\pi/2)$.
In addition, we obtain from \eqref{eq:proofIdEll} with $\vT_{\eta}=0,\pi$ that $\phi(0)=\phi(\pi)=0$; otherwise
	$$|\phi(\pi)|^2=\Gp(0)|\phi(0)|^2=\Gp(0)\Gp(\pi)|\phi(\pi)|^2<|\phi(\pi)|^2,$$
and similarly for $\phi(0)$, which is a contradiction.
From Lemma~\ref{lem:4critEll} and Corollary~\ref{cor:4critEll} we get
\begin{equation}\label{eq:Cpjmap}
	\mbox{$\Cp_{1}(0,\pi/2)=\Cp_{2}(\pi,3\pi/2)=(0,\pi/2)$,
		\qquad
		$\Cp_{1}(\pi,3\pi/2)=\Cp_{2}(0,\pi/2)=(\pi,3\pi/2)$,}\footnote{We note here that ``$(0,\pi/2)$'' denotes the interval from $0$ to $\pi/2$, and not a coordinate.}
\end{equation}
and (from Remark~\ref{rem:4critEll})
\begin{equation}\label{eq:Cpjmono}
	\Cp_{1}t~<~t\AnD\Cp_{1}(t+\pi)-\pi~=~\Cp_{1}t~<~\Cp_{2}t-\pi~=~\Cp_2(t+\pi),\qquad \mbox{for all $t\in(0,\pi/2)$}.
\end{equation}

Next we show that $\phi=0$ in $(0,\Cp_1t_1)\subset(0,t_1)\subset(0,\pi/2)$. If not, then we can find $t_2\in(0,t_1]$ such that
\begin{equation}\label{eq:proofNormEll}
	|\phi(\Cp_1t_2)|=\|\phi\|_{C^0[0,\Cp_1t_1]}>0\AND |\phi(t)|<|\phi(\Cp_1t_2)|\quad\mbox{for all $t\in(0,\Cp_1t_2)$}.
\end{equation}
Denote $\Cp_0=\Cp_2^{-1}\Cp_1$. We observe from \eqref{eq:Cpjmap}, \eqref{eq:Cpjmono} and Lemma~\ref{lem:critEll} that $\Cp_0: (0,\pi/2) \to (\pi,3\pi/2)$ and $\Cp_0: (\pi,3\pi/2)\to (0,\pi/2)$ are both increasing bijections. Moreover,
\begin{equation}\label{eq:proofMonoEll}
	\pi<\Cp_0t<t+\pi \AND 0<\Cp_0^2t<\Cp_0(t+\pi)<t, 
	\qquad \mbox{for all $t\in(0,\pi/2)$}.
\end{equation}
Then we can apply \eqref{eq:proofIdEll} and \eqref{eq:G<1} for $\vT_{\eta}=\Cp_2^{-1}\Cp_1t_2=\Cp_0t_2\in(\pi,\pi+t_2)$ and for $\vT_{\eta}=\Cp_0^2t_2\in(0,t_2)$, which yields
\begin{equation*}
	|\phi(\Cp_1t_2)|^2=\Gp(\Cp_0t_2)|\phi(\Cp_{1}\Cp_0t_2)|^2=\Gp(\Cp_0t_2)\Gp(\Cp_0^2t_2)|\phi(\Cp_{1}\Cp_0^2t_2)|^2<|\phi(\Cp_{1}\Cp_0^2t_2)|^2.
\end{equation*}
However, it contradicts \eqref{eq:proofNormEll} since $0<\Cp_{1}\Cp_0^2t_2<\Cp_1t_2$ by \eqref{eq:proofMonoEll}.

We can now prove $\phi=0$ in $(0,\pi/2)$.   Since $\phi=0$ in $(0,\Cp_1t_1)$, we obtain from \eqref{eq:proofIdEll} that $\phi=0$ in $(\pi,\Cp_{2}t_1)$. We can apply \eqref{eq:proofIdEll} for $\vT_\eta\in(\pi,\Cp_{1}^{-1}\Cp_{2}t_1)$ to obtain $\phi=0$ in $(0,\Cp_{2}\Cp_{1}^{-1}\Cp_{2}t_1)= (0,\Cp_{1}\CP_{0}^2 t_1)$, where $\CP_{0}:=\Cp_{1}^{-1}\Cp_{2}$. With an argument of induction, we can now show that
\begin{equation*}
	\phi=0 \qquad\mbox{in $(0,\Cp_1\CP_{0}^{2m}t_1)$, \quad for any $m\in\mathbb{N}$.}
\end{equation*}

From \eqref{eq:proofMonoEll} it follows that that $t<\CP_{0}^2t<\pi/2$ for all $t\in(0,\pi/2)$. As a consequence, $\{\Cp_1\CP_{0}^{2m}t_1\}_{m\in\mathbb{N}}$ is a strictly increasing sequence in $(0,\pi/2)$. Therefore it has a limit in $(0,\pi/2]$ as $m\to\infty$, denoted as $t_0$. In fact we must have $t_0=\pi/2$; otherwise $\Cp_1^{-1}t_0=\CP_{0}^2 \Cp_1^{-1}t_0>\Cp_1^{-1}t_0$. Therefore, we have deduced that $\phi=0$ in $(0,\pi/2)$.

Applying  \eqref{eq:proof0Ell} or \eqref{eq:proofIdEll} again for $\vT_{\eta}\in(0,\pi/2)$ yields $\phi=0$ in $(\pi,3\pi/2)$. Using analogous arguments we can also deduce that $\phi=0$ in $(\pi/2,\pi)\cup(3\pi/2,2\pi)$. In particular, we need to make use of the properties
\begin{equation*}
	\mbox{$\Cp_{1}(\pi/2,\pi)=\Cp_{2}(3\pi/2,2\pi)=(\pi/2,\pi)$,
		\qquad
		$\Cp_{1}(3\pi/2,2\pi)=\Cp_{2}(\pi/2,\pi)=(3\pi/2,2\pi)$,}
\end{equation*}
and
\begin{equation*}
	\Cp_{1}t~>~t\AnD\Cp_{1}(t+\pi)-\pi~=~\Cp_{1}t~>~\Cp_{2}t-\pi~=~\Cp_2(t+\pi),\qquad \mbox{for all $t\in(\pi/2,\pi)$},
\end{equation*}
which can be obtained from Lemma~\ref{lem:4critEll} and Corollary~\ref{cor:4critEll}.
\end{proof}

\section{Star-Shaped Domains}\label{sec:starshape}
Let $\Omega$ be a $C^2$ domain that is star-shaped with respect to the origin and with a radius function $\rho$. In other words $\partial\Omega=\{y(\theta):=\rho(\theta)\vec{\theta}; \theta\in[0,2\pi)\}$. Then
\begin{equation}\label{eq:y''}
	y'=(\rn'\vec{\theta}+\vec{\theta}^{\perp})\rho\AnD
	y''=\Pare{(\rn'^2-1+\rn'')\vec{\theta}+2\rn'\vec{\theta}^{\perp}}\rho,
\qquad\mbox{where $\rn:=\ln\rho$}.
\end{equation}
Recall that $(\theta,\vT_{\xi})$ belongs to the set of stationary points $\cS=\cS_{\Omega,\eta}$ (of $\psi_\eta$) if and only if \eqref{eq:stationaryGeneral2} is satisfied, or equivalently, for $l=1$ or $2$,
\begin{equation}\label{eq:criticalStar}
	\xi=(-1)^{l-1}\,\vec{\theta}
	\AnD
	\Pare{\vec{\theta}\cdot\eta_l\sqrt{q}+1}\rn'(\theta)=-\vec{\theta}^{\perp}\cdot\eta_l\sqrt{q}
	,\qquad\mbox{with $\eta_l:=(-1)^{l-1}\eta$}.
\end{equation}
Notice that $\theta$ that satisfies $\vec{\theta}\cdot\eta_l\sqrt{q}+1=0$ can never be a solution to \eqref{eq:criticalStar}, since $q\ne 1$. Therefore the second condition in \eqref{eq:criticalStar} for stationary points can equivalently be written as
\begin{equation}\label{eq:criticalStar2}
	\rn'(\theta)=h(\theta-\vT_{\eta_l})=\frac{\sqrt{q}\sin(\theta-\vT_{\eta_l})}{\sqrt{q}\cos(\theta-\vT_{\eta_l})+1},
\end{equation}
where $h$ is given by
\begin{equation}\label{eq:hstar}
	h(\theta)=\frac{\sqrt{q}\sin\theta}{\sqrt{q}\cos\theta+1}.
\end{equation}
Applying \eqref{eq:criticalStar} we have for every $(\theta,\vT_{\xi})\in \cS_{\Omega,\eta}$ with $\xi=(-1)^{l-1}\vec{\theta}$ that
\begin{equation}\label{eq:y'calc}
	-\frac{y'(\theta)^\perp}{\rho(\theta)}
	=\vec{\theta}- \vec{\theta}^\perp\rn'(\theta)
	=\frac{(\vec{\theta}\cdot\eta_l\sqrt{q}+1)\vec{\theta} +\pare{\vec{\theta}^\perp\cdot\eta_l}\vec{\theta}^\perp\sqrt{q}}{\vec{\theta}\cdot\eta_l\sqrt{q}+1}
	=\frac{\eta_l\sqrt{q} +\vec{\theta} }{\vec{\theta}\cdot\eta_l\sqrt{q}+1},
\end{equation}
where in the last equality we have made use of the identity $\eta=(\vec{\theta}\cdot\eta)\vec{\theta}+(\vec{\theta}^\perp\cdot\eta)\vec{\theta}^\perp$ with $\eta=\eta_l$. Inserting \eqref{eq:y''} and \eqref{eq:y'calc} into \eqref{eq:psi} and \eqref{eq:Hessian} we obtain that
\begin{equation}\label{eq:PsiStationaryStar}
	\frac{\Psi_\eta(\theta,\vT_{\xi})}{\rho(\theta)}
	=\frac{(-1)^{l-1}(q-1)}{\vec{\theta}\cdot\eta_l\sqrt{q}+1}
	=\frac{(q-1)\rho(\theta)}{\psi_\eta(\theta,\vT_{\xi})},
\end{equation}
\begin{equation}\label{eq:HessianStar}
	\begin{split}
		(D^2\psi_{\eta})(\theta,\vT_{\xi})
		&=
		-(-1)^{l-1}\rho(\theta) \begin{bmatrix}
			\Pare{1+\rn'^2-\rn''}|_\theta(\vec{\theta}\cdot\eta_l\sqrt{q}+1)&-1
			\\-1&1
		\end{bmatrix}.
	\end{split}
\end{equation}

The proofs of Theorems~\ref{thm:Star1},~\ref{thm:main1Star}, and~\ref{thm:StarAnly} follow similar ideas as in the proof for the case of an ellipse. However, since we do not have the explicit structure of $\rho$ (or even the $\pi$-periodicity), a more detailed analysis of the stationary points is required. In the rest of this section, we shall first prove the existence of stationary points as well as their properties as a function of $\eta\in\Ss^1$. Then we establish the precise form of the leading term in the asymptotics \eqref{eq:asympGen_diffeta}, and discuss its implications using a Vandermonde matrix similar to that in the proof of Theorem~\ref{thm:ellipse}. Finally, we prove the results related to star-shaped domains.
\subsection{The Stationary Points for $q>1$}
\label{sec:statStarBig}
We consider the case when $q>1$ and $\rho$ satisfies \eqref{eq:StarBigCond1}.
Recall the function $h$ as defined in \eqref{eq:hstar}.
Then
\begin{equation}\label{eq:hdiffstar}
	h'(\theta)
	~=~
	\frac{\sqrt{q}(\sqrt{q}+\cos\theta)}{\pare{\sqrt{q}\cos\theta+1}^2}
	~\ge~	\frac{\sqrt{q}}{1+\sqrt{q}}>0\qquad\mbox{for all $\theta$}.
\end{equation}
Hence $h$ has the range $(-\infty,\infty)$, and monotonically increases on both the intervals $(\theta_q-\pi,\pi-\theta_q)$ and $(\pi-\theta_q,\theta_q+\pi)$, where $\theta_q=\arccos (1/\sqrt{q})\in(0,\pi/2)$.
\begin{figure}[h]
	\begin{subfigure}{.55\textwidth}
		\centering\includegraphics{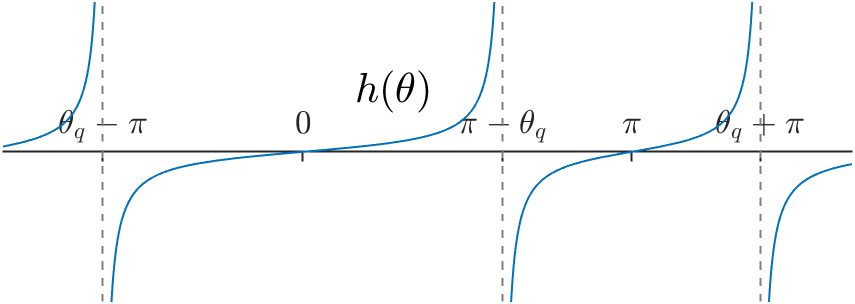}
	\end{subfigure}
	\begin{subfigure}{.45\textwidth}
		\centering
		\includegraphics{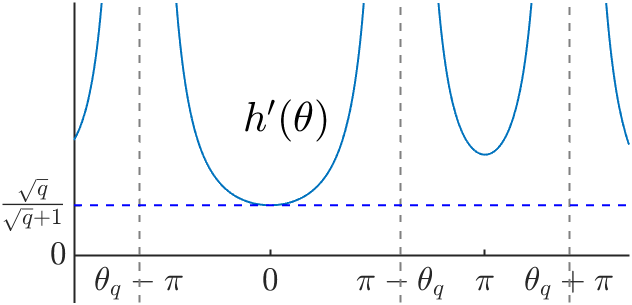}
	\end{subfigure}
	\caption{Graphs of $h$ and $h'$ with $q=9$.}
\end{figure}
Therefore, we have
\begin{lem}\label{lem:4criticStarBig}
	Suppose that $q>1$ and $\rho$ is a $C^2$ function satisfying \eqref{eq:StarBigCond1}.
	Then for each $\eta\in\Ss^1$ and each $l\in\{1,2\}$, there are exactly two solutions $\Cp_{1,l}\eta$ and $\Cp_{2,l}\eta$ to \eqref{eq:criticalStar2}, which satisfy $\Cp_{1,l}\eta-\vT_{\eta_l} \in(\theta_q-\pi, \pi-\theta_q)$ and $\Cp_{2,l}\eta-\vT_{\eta_l} \in (\pi-\theta_q, \theta_q+\pi)$.
\end{lem}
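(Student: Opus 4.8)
The plan is to reduce the count of stationary points to a branch-by-branch monotonicity argument for the auxiliary function $G(\theta):=\rn'(\theta)-h(\theta-\vT_{\eta_l})$, whose zeros are precisely the solutions of \eqref{eq:criticalStar2}. First I would record the branch structure of $h$ already invoked after \eqref{eq:hdiffstar}: the denominator $\sqrt q\cos\theta+1$ in \eqref{eq:hstar} vanishes exactly at $\theta=\pi-\theta_q$ and $\theta=\pi+\theta_q$, and these two points are the common endpoints of the intervals $J_1:=(\theta_q-\pi,\pi-\theta_q)$ and $J_2:=(\pi-\theta_q,\pi+\theta_q)$, whose lengths sum to $2\pi$; so $J_1$, $J_2$ and the two excluded points tile $\RR/(2\pi\ZZ)$. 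On each $J_m$ the function $h$ is $C^1$ and, by \eqref{eq:hdiffstar}, strictly increasing, and a direct inspection of \eqref{eq:hstar} shows $h\to-\infty$ at the left endpoint and $h\to+\infty$ at the right endpoint of each $J_m$, so that $h(J_m)=(-\infty,\infty)$.

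Next, I would fix $\eta\in\Ss^1$ and $l\in\{1,2\}$, write $\alpha:=\vT_{\eta_l}$, and set $I_m:=\alpha+J_m$ for $m=1,2$. On $I_m$ the function $G(\theta)=\rn'(\theta)-h(\theta-\alpha)$ is $C^1$, and combining the admissibility condition \eqref{eq:StarBigCond1} with the lower bound \eqref{eq:hdiffstar} gives
\[
G'(\theta)=\rn''(\theta)-h'(\theta-\alpha)<\frac{\sqrt q}{1+\sqrt q}-\frac{\sqrt q}{1+\sqrt q}=0 ,
\]
so $G$ is strictly decreasing on $I_m$. Since $\rn'=(\ln\rho)'$ is continuous, hence bounded near the endpoints of $I_m$, while $h(\theta-\alpha)\to-\infty$ as $\theta$ approaches the left endpoint of $I_m$ and $h(\theta-\alpha)\to+\infty$ as $\theta$ approaches the right endpoint, the function $G$ decreases monotonically from $+\infty$ to $-\infty$ on $I_m$. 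By the intermediate value theorem it has exactly one zero in $I_m$; I would call these $\Cp_{1,l}\eta\in I_1$ and $\Cp_{2,l}\eta\in I_2$, and by construction they satisfy $\Cp_{1,l}\eta-\alpha\in J_1$ and $\Cp_{2,l}\eta-\alpha\in J_2$, which are the claimed inclusions.

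Finally, I would observe that these are all the solutions: \eqref{eq:criticalStar2} is only meaningful where $\sqrt q\cos(\theta-\alpha)+1\neq0$, i.e.\ for $\theta\in I_1\cup I_2$ modulo $2\pi$ --- equivalently, the two excluded endpoints are exactly the $\theta$'s with $\vec\theta\cdot\eta_l\sqrt q+1=0$, which, as already noted since $q\neq1$, can never be stationary points of $\psi_\eta$. Hence $I_1\cup I_2$ captures every solution, and there are exactly two. I expect the only genuinely delicate step to be the branch bookkeeping for $h$: correctly locating its poles modulo $2\pi$ and verifying that the two monotone branches $J_1,J_2$ cover $\RR/(2\pi\ZZ)$ minus those poles, with the right blow-up signs at the endpoints. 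Once that is in place everything else is routine --- the single substantive input is the strict inequality $\rn''<\sqrt q/(1+\sqrt q)\le h'$ furnished by admissibility, which is precisely what makes $G$ strictly monotone on each branch and thus forces exactly one stationary point per branch.
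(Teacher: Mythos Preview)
Your argument is correct and is precisely the approach the paper has in mind: the paper merely records that $h'\ge\sqrt q/(1+\sqrt q)$ and that $h$ is monotone with full range on each of the two branches $(\theta_q-\pi,\pi-\theta_q)$ and $(\pi-\theta_q,\pi+\theta_q)$, then writes ``Therefore, we have'' and states the lemma. Your write-up makes explicit the one substantive step the paper leaves implicit, namely that the admissibility bound \eqref{eq:StarBigCond1} combined with \eqref{eq:hdiffstar} forces $G'=\rn''-h'(\cdot-\alpha)<0$ on each branch, whence $G$ decreases from $+\infty$ to $-\infty$ and has a unique zero there.
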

We note that\footnote{For indices in $\{1,2\}$, addition is performed modulo $2$.}
\begin{equation}\label{eq:CppiStarBig}
	\Cp_{j,l}\tilde{\eta}=\Cp_{j,l+1}\eta, \qquad\mbox{for $\tilde{\eta}=-\eta$, $\eta\in\Ss^1$, and $j,l=1,2$,}
\end{equation}
and
\begin{equation}\label{eq:cossintStarBig}
	(-1)^{j-1}\pare{\sqrt{q}\cos(\Cp_{j,l}\eta-\vT_{\eta_l})+1}>0
	\AnD \sgn\sin(\Cp_{j,l}\eta-\vT_{\eta_l})=(-1)^{j-1}\sgn\rho'(\Cp_{j,l}\eta).
\end{equation}
Notice that \eqref{eq:criticalStar2} implies $q(g^2+1)t^2+2\sqrt{q}g^2t+g^2-q=0$, where $t=\cos(\theta-\vT_{\eta_l})$ and $g=\rn'(\theta)$.
Thanks to \eqref{eq:cossintStarBig} we derive that
\begin{equation}\label{eq:cost2StarBig}
	\sqrt{q}\cos(\Cp_{j,l}\eta-\vT_{\eta_l})
	=
	\frac{-\rn'^2-(-1)^j\sqrt{q+(q-1)\rn'^2}}{1+\rn'^2}\,\Big|_{\Cp_{j,l}\eta},
\end{equation}
and
\begin{equation}\label{eq:sint2StarBig}
	\sqrt{q}\sin(\Cp_{j,l}\eta-\vT_{\eta_l})
	=\rn'\frac{1-(-1)^j\sqrt{q+(q-1)\rn'^2}}{1+\rn'^2}\,\Big|_{\Cp_{j,l}\eta}.
\end{equation}
As a consequence, for each $j,l=1,2$ we have
\begin{equation}\label{eq:critDiffrStarBig}
	\Cp_{j,l-1+j}\eta=\vT_{\eta_l}~\qquad\mbox{if and only if}\qquad
	\rho'(\vT_{\eta_l})=0,
\end{equation}
by recalling \eqref{eq:CppiStarBig} and noticing that $q+(q-1)\rn'^2>1$.
We can regard the solutions $\Cp_{j,l}\eta$, $j=1,2$, to \eqref{eq:criticalStar2} as maps $\Cp_{j,l}:\Ss^1\to\mathbb{R}/\{2\pi\}$. When there is no ambiguity, we shall sometimes consider the domain of $\Cp_{j,l}$ as $\mathbb{R}/\{2\pi\}$ by identifying $\vT_{\eta_l}$ with $\eta_l$, or refer to the range fo $\Cp_{j,l}$ as $\Ss^1$ by identifying $\vec{\theta}$ with $\theta$.
\begin{lem}\label{lem:critMapStarBig}
	Under the same assumptions and notations as in Lemma~\ref{lem:4criticStarBig}, for each $j,l=1,2$, $\Cp_{j,l}:\Ss^1\to\Ss^1$ is a bijection and $\Cp_{j,l}:\mathbb{R}/(2\pi\ZZ)\to\mathbb{R}/(2\pi\ZZ)$ is $C^1$ and locally strictly  increasing\footnote{The monotonicity holds when we take the domain and the range of $\Cp_{j,l}$ to be fixed $2\pi$-length intervals. In other words, $\Cp_{j,l}:(t_0,t_0+2\pi)\to (\theta_0+2\kappa\pi,\theta_0+2(\kappa+1)\pi))$ is strictly increasing for any $t_0\in\RR$ and any $\kappa\in\ZZ$, where $\theta_0\cong \Cp_{j,l}t_0$ is the solution to \eqref{eq:criticalStar2} as specified in Lemma~\ref{lem:4criticStarBig}. The same applies to the regularity.}.
\end{lem}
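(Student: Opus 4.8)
My plan is to obtain the bijectivity and the regularity/monotonicity separately, with the Implicit Function Theorem doing the bulk of the work for the latter. First I would fix $j,l\in\{1,2\}$ and write the defining relation \eqref{eq:criticalStar2} in the form $G(\eta,\theta):=\rn'(\theta)-h(\theta-\vT_{\eta_l})=0$, where I think of $\vT_{\eta_l}$ as a real variable $s$ (with $\eta_l=(-1)^{l-1}\eta$, so $s=\vT_\eta+(l-1)\pi$ up to $2\pi$) ranging over $\RR/(2\pi\ZZ)$, and $\theta$ as the unknown. Since $\rho\in C^2$, we have $\rn'\in C^1$, and $h$ is real-analytic on its interval of definition by \eqref{eq:hstar}--\eqref{eq:hdiffstar}; hence $G$ is $C^1$ jointly. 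The key computation is the partial derivative in $\theta$: $\partial_\theta G(\eta,\theta)=\rn''(\theta)-h'(\theta-\vT_{\eta_l})$. By the admissibility condition \eqref{eq:StarBigCond1} we have $\rn''<\sqrt q/(1+\sqrt q)$ everywhere, while \eqref{eq:hdiffstar} gives $h'\ge \sqrt q/(1+\sqrt q)$ everywhere; therefore $\partial_\theta G<0$ at every point of the curve $\{G=0\}$. The Implicit Function Theorem then shows that near any solution, $\theta$ is a $C^1$ function of $\vT_{\eta_l}$ (hence of $\vT_\eta$), so each branch $\Cp_{j,l}$ is $C^1$ as a map $\RR/(2\pi\ZZ)\to\RR/(2\pi\ZZ)$, once we know the branches are globally well-defined and don't collide — which is exactly the content of Lemma~\ref{lem:4criticStarBig} (two solutions, separated into the disjoint $\theta$-intervals $(\theta_q-\pi,\pi-\theta_q)$ and $(\pi-\theta_q,\theta_q+\pi)$ relative to $\vT_{\eta_l}$).

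For the monotonicity I would differentiate $G(\eta,\Cp_{j,l}\eta)=0$ in $\vT_\eta$ to get
\begin{equation*}
	\frac{d\,\Cp_{j,l}\eta}{d\vT_\eta}=\frac{h'(\Cp_{j,l}\eta-\vT_{\eta_l})}{h'(\Cp_{j,l}\eta-\vT_{\eta_l})-\rn''(\Cp_{j,l}\eta)},
\end{equation*}
where I used $d\vT_{\eta_l}/d\vT_\eta=1$. By \eqref{eq:hdiffstar} the numerator is $\ge\sqrt q/(1+\sqrt q)>0$, and the denominator is $h'-\rn''>0$ by the same two-sided bound as above; hence $d\Cp_{j,l}\eta/d\vT_\eta>0$ everywhere, which is the asserted local strict increase (in the sense of the footnote: the monotonicity is of the lifted map on $2\pi$-length intervals). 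This step is genuinely short, being a one-line quotient, and mirrors the argument for \eqref{eq:firstder} in Lemma~\ref{lem:critEll}.

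Finally, for bijectivity of $\Cp_{j,l}:\Ss^1\to\Ss^1$: injectivity on $\RR/(2\pi\ZZ)$ follows from the strict monotonicity together with the correct degree, so the real point is surjectivity. Here I would use the normalization that pins down where the branches agree or differ: by \eqref{eq:critDiffrStarBig}, $\Cp_{j,l-1+j}\eta=\vT_{\eta_l}$ exactly when $\rho'(\vT_{\eta_l})=0$, and one checks from \eqref{eq:cost2StarBig}--\eqref{eq:sint2StarBig} that as $\vT_\eta$ traverses a full period, $\Cp_{j,l}\eta-\vT_{\eta_l}$ stays in its designated subinterval of $(\theta_q-\pi,\theta_q+\pi)$ and returns to its starting value after $\vT_\eta$ increases by $2\pi$; combined with continuity and strict monotonicity this forces the lift of $\Cp_{j,l}$ to increase by exactly $2\pi$ over a period, i.e. $\Cp_{j,l}$ has degree one, hence is onto. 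The one place to be slightly careful is whether the map stays $C^1$ (not merely continuous) across the points where $\rho'$ vanishes — but the IFT argument above is insensitive to that, since $\partial_\theta G\ne 0$ is what matters and it holds unconditionally under \eqref{eq:StarBigCond1}. I expect the main obstacle to be purely bookkeeping: keeping the two indices $j$ (which branch) and $l$ (which sign of $\eta$) straight through \eqref{eq:CppiStarBig} and \eqref{eq:critDiffrStarBig} while verifying the degree-one claim; there is no analytic difficulty beyond the uniform gap $h'-\rn''\ge$ (something positive) that \eqref{eq:StarBigCond1} was designed to provide.
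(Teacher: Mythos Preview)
Your argument is correct and follows essentially the same route as the paper: both differentiate the defining relation $\rn'(\theta)=h(\theta-\vT_{\eta_l})$ to obtain the formula $d\Cp_{j,l}\eta/d\vT_\eta=h'/(h'-\rn'')$, and both use the admissibility bound \eqref{eq:StarBigCond1} together with $h'\ge\sqrt q/(1+\sqrt q)$ from \eqref{eq:hdiffstar} to force the derivative to be positive. The only minor difference is in the surjectivity step: the paper constructs the inverse explicitly by observing that, for any given $\theta$, the formulas \eqref{eq:cost2StarBig}--\eqref{eq:sint2StarBig} (with $\Cp_{j,l}\eta$ replaced by $\theta$) determine $\vT_{\eta_l}$ uniquely, whereas you argue via degree, noting that $\Cp_{j,l}\eta-\vT_{\eta_l}$ is confined to a subinterval of length $<2\pi$ and hence the lift must increase by exactly $2\pi$ over a period; both are equally valid and equally short.
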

\begin{proof}
	Given $j,l=1,2$, we first deduce from \eqref{eq:hdiffstar} and \eqref{eq:cost2StarBig} that
	\begin{equation}\label{eq:h'}
		(\sqrt{q}\cos t+1)h'(t)=(-1)^{j-1}\sqrt{q+(q-1)\rn'^2(\theta)},
	\end{equation}
	where $\theta=\theta(\eta)=\Cp_{j,l}\eta$ and $t=\theta-\vT_{\eta_l}$.
	Differentiating \eqref{eq:criticalStar2} with respect to $\vT_{\eta}$ and
applying \eqref{eq:hdiffstar} we arrive at
	\begin{equation*}
	\rn''(\theta)\theta'=\sqrt{q}(\theta'-1)\frac{\sqrt{q}+\cos(\theta-\vT_{\eta_l})}{\pare{\sqrt{q}\cos(\theta-\vT_{\eta_l})+1}^2}
	=(\theta'-1)h'(\theta-\vT_{\eta_l}),
	\end{equation*}
	where $\theta':=\partial\theta/\partial\vT_{\eta}=\partial\Cp_{j,l}\eta/\partial\vT_{\eta}$. Hence by \eqref{eq:h'} we have
	\begin{equation}\label{eq:dTstarbig}
		\dd{\theta}{\vT_{\eta}}
		=\frac{h'(t)}{h'(t)-\rn''(\theta)}
		=\dfrac{\sqrt{q+(q-1)\rn'^2(\theta)}}{\sqrt{q+(q-1)\rn'^2(\theta)}+(-1)^{j}\pare{\sqrt{q}\cos t+1}\rn''(\theta)}.
	\end{equation}
From \eqref{eq:cossintStarBig} it follows that $0>(-1)^{j}\pare{\sqrt{q}\cos t+1}>-(\sqrt{q}+1)$. Then, recalling the admissibility condition \eqref{eq:StarBigCond1} on $\rho$, we deduce
\begin{equation*}
	\sqrt{q+(q-1)\rn'^2(\theta)}+(-1)^{j}\pare{\sqrt{q}\cos t+1}\rn''(\theta)
	>\sqrt{q+(q-1)\rn'^2(\theta)}-\sqrt{q}\ge 0.
\end{equation*}
Consequently, $\partial\theta/\partial\vT_{\eta}>0$, and
	 $\Cp_{j,l}\eta$, $j,l=1,2$, are strictly increasing $C^1$ functions of $\vT_{\eta}$.
	Thanks to the monotonicity, the bijectivity will follow if $\Cp_{j,l}$ is surjective on $\Ss^1$.
	In fact given $j,l=1,2$, and $\vec{\theta}\in\Ss^1$, the equations \eqref{eq:cost2StarBig} and \eqref{eq:sint2StarBig}, with $\Cp_{j,l}\eta$ replaced by $\theta$, uniquely determine a  $\vT_{\eta_l}\in\RR/\{2\pi\}$ and hence a unique $\eta\in\Ss^1$. The proof is complete.
\end{proof}
\begin{lem}\label{lem:crimoreStarBig}
	Under the same assumptions and notations as in Lemma~\ref{lem:4criticStarBig}, we have for each $\eta\in\Ss^1$ and each $l=1,2$ that
	\begin{equation*}
		\sgn \rho'(\Cp_{1,l}\eta)=(-1)^{l-1}\sgn \sin(\Cp_{1,l}\eta-\vT_{\eta}) =(-1)^{l-1}\sgn \sin(\Cp_{2,l+1}\eta-\vT_{\eta})=\sgn\rho'(\Cp_{2,l+1}\eta).
	\end{equation*}
	Moreover, for each $l=1,2$ and every $\eta\in\Ss^1$ such that $\rho'(\vT_{\eta_l})\neq0$, we must have $0<|\Cp_{2,l+1}\eta-\vT_{\eta_l}|<|\Cp_{1,l}\eta-\vT_{\eta_l}|<\pi-\theta_q$, and
	\begin{equation*}
		\rho(\Cp_{2,l+1}\eta)<\rho(\Cp_{1,l}\eta)
		\AND 0<|\sin(\Cp_{2,l+1}\eta-\vT_{\eta})|<|\sin(\Cp_{1,l}\eta-\vT_{\eta})|.
	\end{equation*}
\end{lem}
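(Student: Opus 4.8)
The plan is to reduce everything to a one-dimensional comparison for the scalar equation \eqref{eq:criticalStar2}, $\rn'(\theta)=h(\theta-\vT_{\eta_l})$, that characterizes the stationary angles, the only structural inputs being the admissibility bound \eqref{eq:StarBigCond1} and the lower bound $h'\ge\sqrt q/(1+\sqrt q)$ from \eqref{eq:hdiffstar}. Two reductions come first. By \eqref{eq:CppiStarBig} the case $l=2$ follows from the case $l=1$ after replacing $\eta$ by $-\eta$ (using $\Cp_{j,2}\eta=\Cp_{j,1}(-\eta)$, $\Cp_{2,1}\eta=\Cp_{2,2}(-\eta)$ and $\vT_{-\eta}=\vT_\eta+\pi$), so it suffices to take $l=1$, where $\vT_{\eta_1}=\vT_\eta$ and $\vT_{\eta_2}=\vT_\eta+\pi$. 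If $\rho'(\vT_\eta)=0$, then \eqref{eq:critDiffrStarBig} forces $\Cp_{1,1}\eta=\Cp_{2,2}\eta=\vT_\eta$ and all four quantities in the first assertion vanish; hence I may assume $m:=\rn'(\vT_\eta)\ne0$, which is also the standing hypothesis of the ``Moreover'' part.

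The main step will be to introduce $\Phi(\theta):=\rn'(\theta)-h(\theta-\vT_\eta)$, considered on the branch $\theta-\vT_\eta\in(\theta_q-\pi,\pi-\theta_q)$, and $\widetilde{\Phi}(\theta):=\rn'(\theta)-h(\theta-\vT_{\eta_2})$, considered on the branch $\theta-\vT_{\eta_2}\in(\pi-\theta_q,\theta_q+\pi)$, i.e.\ $\theta-\vT_\eta\in(-\theta_q,\theta_q)$ (note $\theta_q\in(0,\pi/2)$, so the latter branch sits inside the former). By \eqref{eq:StarBigCond1} and \eqref{eq:hdiffstar} one has $\Phi'=\rn''-h'<0$ and $\widetilde{\Phi}'<0$, so each decreases from $+\infty$ to $-\infty$ across its branch; by Lemma~\ref{lem:4criticStarBig} their unique zeros are $\Cp_{1,1}\eta$ and $\Cp_{2,2}\eta$ respectively. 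Since $h(0)=h(-\pi)=0$, we get $\Phi(\vT_\eta)=\widetilde{\Phi}(\vT_\eta)=m\ne0$, so both zeros lie on the side of $\vT_\eta$ of sign $\sgn m$; writing $\gamma_1,\gamma_2$ for the signed displacements of $\Cp_{1,1}\eta,\Cp_{2,2}\eta$ from $\vT_\eta$, this already gives $\gamma_1\gamma_2>0$. On the open interval between $\vT_\eta$ and $\Cp_{1,1}\eta$ one has $\sgn\Phi=\sgn m$, hence there $\rn'$ has the sign of $h(\cdot-\vT_\eta)$, which is $\sgn m$ because $h$ is increasing with $h(0)=0$; consequently $\rho$ is strictly monotone on the closed interval with endpoints $\vT_\eta$ and $\Cp_{1,1}\eta$, increasing if $m>0$ and decreasing if $m<0$.

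To finish I would locate $\Cp_{2,2}\eta$ relative to $\Cp_{1,1}\eta$. From $\widetilde{\Phi}(\Cp_{2,2}\eta)=0$, i.e.\ $\rn'(\Cp_{2,2}\eta)=h(\gamma_2-\pi)$, a short computation yields
\begin{equation*}
	\Phi(\Cp_{2,2}\eta)=h(\gamma_2-\pi)-h(\gamma_2)=\frac{2\sqrt q\,\sin\gamma_2}{q\cos^2\gamma_2-1},
\end{equation*}
which has the sign of $\sin\gamma_2$, hence of $m$, because $|\gamma_2|<\theta_q$ forces $q\cos^2\gamma_2>1$. Since $\Phi$ is strictly decreasing with $\Phi(\Cp_{1,1}\eta)=0$, this places $\Cp_{2,2}\eta$ strictly between $\vT_\eta$ and $\Cp_{1,1}\eta$, so $0<|\gamma_2|<|\gamma_1|<\pi-\theta_q$; combined with the monotonicity of $\rho$ above, $\rho(\Cp_{2,2}\eta)<\rho(\Cp_{1,1}\eta)$. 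The sign identities for $\rho'$ then follow from $\rho'=\rho\,\rn'$, with $\rn'(\Cp_{1,1}\eta)=h(\gamma_1)$ and $\rn'(\Cp_{2,2}\eta)=h(\gamma_2-\pi)$ both of sign $m$, together with \eqref{eq:cossintStarBig}; and $0<|\sin\gamma_2|<|\sin\gamma_1|$ is elementary from $0<|\gamma_2|<|\gamma_1|<\pi-\theta_q$ and $|\gamma_1|+|\gamma_2|<\pi$ (a case split according to whether $|\gamma_1|\le\pi/2$). The step I expect to be the real obstacle is precisely the separation $0<|\gamma_2|<|\gamma_1|$: it is the only place where the hypotheses are genuinely used, through the explicit evaluation of $h(\gamma_2-\pi)-h(\gamma_2)$ and the sign it forces along the monotone function $\Phi$; the rest is bookkeeping with the two branches of $h$, the relation $\rho'=\rho\,\rn'$, and elementary trigonometric inequalities.
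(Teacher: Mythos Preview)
Your argument is correct and, in fact, somewhat cleaner than the paper's. Both proofs hinge on the same two ingredients---the strict lower bound $h'\ge\sqrt q/(1+\sqrt q)$ versus the admissibility bound $\rn''<\sqrt q/(1+\sqrt q)$, and the comparison of the two branches $h(\cdot)$ and $h(\cdot-\pi)$ on $(-\theta_q,\theta_q)$---but they deploy them differently. The paper argues by contradiction twice: once (via the mean value theorem) to rule out $|\gamma_1|\le|\gamma_2|$, and again to rule out $\rho(\Cp_{2,2}\eta)\ge\rho(\Cp_{1,1}\eta)$, each time producing a point where $\rn''$ would exceed $h'$. You instead observe once and for all that $\Phi=\rn'-h(\cdot-\vT_\eta)$ is strictly decreasing, then (a) evaluate $\Phi$ at $\Cp_{2,2}\eta$ explicitly to read off the ordering $0<|\gamma_2|<|\gamma_1|$ directly, and (b) note that $\rn'$ keeps a fixed sign on the whole interval between $\vT_\eta$ and $\Cp_{1,1}\eta$, so the $\rho$-inequality is immediate from monotonicity. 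Your explicit formula $\Phi(\Cp_{2,2}\eta)=2\sqrt q\,\sin\gamma_2/(q\cos^2\gamma_2-1)$ is the quantitative counterpart of the paper's qualitative inequality $|h|<|h_\pi|$ on $(-\theta_q,\theta_q)\setminus\{0\}$. The net effect is that you replace two separate contradiction arguments by one monotone-function argument; the phrasing ``$\rn'$ has the sign of $h(\cdot-\vT_\eta)$'' is slightly loose (you really mean $\rn'=\Phi+h$ with both summands of sign $m$), but the conclusion is right.
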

\begin{proof}
	We first observe that $\theta=\Cp_{1,1}\eta$ is the unique solution to $\rn'(\theta)=h(\theta-\vT_{\eta})$ with $\theta-\vT_{\eta}\in(\theta_q-\pi, \pi-\theta_q)$, and $\theta=\Cp_{2,2}\eta$ is the unique solution to $\rn'(\theta)=h_\pi(\theta-\vT_{\eta})$ with $\theta-\vT_{\eta}\in(-\theta_q, \theta_q)\subset(\theta_q-\pi, \pi-\theta_q)$.
	Here, $h_\pi(\theta)=h(\theta-\pi)$.
	\begin{figure}
		\centering\includegraphics{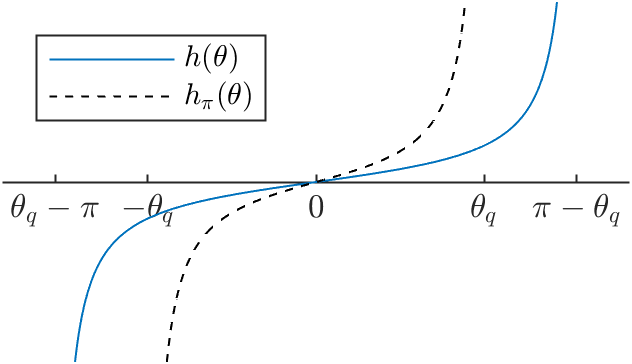}
		\caption{Graphs of $h$ and $h_\pi$ when $q=9$.}
	\end{figure}
	It can be verified straightforwardly that $h_{\pi}$ monotonically increases in $(-\theta_q,\theta_q)$ and that
	\begin{equation*}
		0<|h(\theta)|<|h_\pi(\theta)|,\qquad\mbox{for all $\theta\in(-\theta_q,\theta_q)\backslash\{0\}$}.
	\end{equation*}

	\textbf{(i)} If $\rho'(\Cp_{2,2}\eta)=0$, then $h_\pi(\Cp_{2,2}\eta-\vT_{\eta})=0$ and hence $\Cp_{2,2}\eta=\vT_{\eta}$. Consequently $\rho'(\vT_{\eta})=0$ and $\Cp_{1,1}\eta=\vT_{\eta}$.

	\textbf{(ii)} In the case when $\rho'(\Cp_{2,2}\eta)>0$, we first observe from \eqref{eq:cossintStarBig}  that $\Cp_{2,2}\eta-\vT_\eta\in(0,\theta_q)$.  We claim that $0<\Cp_{2,2}\eta-\vT_\eta<\Cp_{1,1}\eta-\vT_\eta<\pi-\theta_q$, and hence $\rho'(\Cp_{1,1}\eta)>0$. Otherwise if $\theta_q-\pi<\Cp_{1,1}\eta-\vT_\eta\le \Cp_{2,2}\eta-\vT_\eta<\theta_q$, and then
	\begin{equation*}
		\rn'(\Cp_{2,2}\eta)-\rn'(\Cp_{1,1}\eta)=h_\pi(\Cp_{2,2}\eta-\vT_\eta)-h(\Cp_{1,1}\eta-\vT_\eta)> h(\Cp_{2,2}\eta-\vT_\eta)-h(\Cp_{1,1}\eta-\vT_\eta)\ge 0.
	\end{equation*}
	Hence $\Cp_{1,1}\eta\neq\Cp_{2,2}\eta$ and, by the mean value theorem again, there exist $t_1,t_2\in (\Cp_{1,1}\eta, \Cp_{2,2}\eta)$ such that
	\begin{equation*}
		\rn''(t_1)>h'(t_2-\vT_\eta)\ge \sqrt{q}/(1+\sqrt{q}),
	\end{equation*}
	which contradicts \eqref{eq:StarBigCond1}.
	Now $0<\sin(\Cp_{2,2}\eta-\vT_\eta)<\sin(\Cp_{1,1}\eta-\vT_\eta)$ follows by noticing that either $0<\Cp_{2,2}\eta-\vT_\eta<\Cp_{1,1}\eta-\vT_\eta\le\pi/2$ or $0<\Cp_{2,2}\eta-\vT_\eta<\theta_q<\pi/2<\Cp_{1,1}\eta-\vT_\eta<\pi-\theta_q$ holds.
	Next, assume that $\rho(\Cp_{2,2}\eta)\ge\rho(\Cp_{1,1}\eta)$, and hence there exists $t_1\in(\Cp_{2,2}\eta, \Cp_{1,1}\eta)$ such that $\rho'(t_1)\le0$. Thus

	\begin{equation*}
		\rn'(\Cp_{1,1}\eta)-\rn'(t_1)\ge h(\Cp_{1,1}\eta-\vT_{\eta})>h(\Cp_{1,1}\eta-\vT_{\eta})-h(t_1-\vT_\eta),
	\end{equation*}
	where in the latter inequality we utilized the properties $t_1-\vT_{\eta}\in(\Cp_{1,1}\eta-\vT_{\eta}, \Cp_{2,2}\eta-\vT_{\eta})\subset(0,\pi-\vT_{\eta})$ and $h>0$ on $(0,\pi-\vT_{\eta})$. Consequently there exist $t_2,t_3\in (t_1,\Cp_{1,1}\eta)$ such that
	\begin{equation*}
		\rn''(t_2)>h'(t_3-\vT_\eta)\ge \sqrt{q}/(1+\sqrt{q}),
	\end{equation*}
	which again contradicts \eqref{eq:StarBigCond1}. Therefore $\rho(\Cp_{2,2}\eta)<\rho(\Cp_{1,1}\eta)$.

	\textbf{(iii)} By analogous arguments we can show that $\rho'(\Cp_{2,2}\eta)<0$ implies $\Cp_{2,2}\eta-\vT_\eta\in(-\theta_q,0)$, $0>\Cp_{2,2}\eta-\vT_\eta>\Cp_{1,1}\eta-\vT_\eta>\theta_q-\pi$, $\rho'(\Cp_{1,1}\eta)<0$, $0>\sin(\Cp_{2,2}\eta-\vT_\eta)>\sin(\Cp_{1,1}\eta-\vT_\eta)$, and $\rho(\Cp_{2,2}\eta)<\rho(\Cp_{1,1}\eta)$.

	This verifies the statements of the lemma for $l=1$, the statements for $l=2$ can be proven analogously by identifying $\Cp_{1,2}\eta$ as the unique solution to $\rn'(\theta)=h(\theta-\vT_{\eta_2})$ with $\theta-\vT_{\eta_2}\in(\theta_q-\pi, \pi-\theta_q)$, and $\Cp_{2,1}\eta$ the unique solution to $\rn'(\theta)=h_\pi(\theta-\vT_{\eta_2})$ with $\theta-\vT_{\eta_2}\in(-\theta_q, \theta_q)$.
\end{proof}

\subsection{The Asymptotics for $q>1$} 
We shall prove Theorem~\ref{thm:main1Star} by contradiction. To that end, assume that there are infinitely many $k_n$'s such that \eqref{eq:ITEP} admits a solution $(u_{k_n},v_{k_n})$ with $v_{k_n}=\He[k_n,\phi]$ for a given nontrivial $C^1$ function $\phi$. Then the principal term of $\Int^{(N)}(k)$, given by \eqref{eq:asympGen_diffeta} , will tend to $0$ as $k=k_n\to\infty$, for all $\eta\in\Ss^1$ and all $N\in\mathbb{N}$.

We observe from \eqref{eq:PsiStationaryStar}, \eqref{eq:HessianStar} and \eqref{eq:cossintStarBig} that
\begin{equation}\label{eq:sgnPsiStarBig}
	\sgn \Psi_{\eta}^{j,l}=\sgn \psi_{\eta}^{j,l}=(-1)^{l+j}
	\AND
	\sgn \Tr  D^2\psi_{\eta}^{1,l}=-(-1)^{l-1}
\end{equation}
where
\begin{equation}
	\Brak{\Psi_{\eta}^{j,l},\psi_{\eta}^{j,l}, D^2\psi_{\eta}^{j,l}} = \brak{\Psi_{\eta},\psi_{\eta}, D^2\psi_{\eta}}(\Cp_{j,l}\eta\,,\,\Cp_{j,l}\eta+\delta_{l,2}\pi),\qquad j,l=1,2.
\end{equation}
Applying \eqref{eq:cost2StarBig} to \eqref{eq:HessianStar} yields
\begin{equation*}
	(-1)^{j-1}\det(D^2\psi_{\eta})(\theta,\vT_{\xi})
	=\pare{\sqrt{q+(q-1)\rn'^2}+(-1)^{j}(\vec{\theta}\cdot\eta_l\sqrt{q}+1)\rn''}|_\theta \rho^2(\theta),
\end{equation*}
with $\theta=\Cp_{j,l}\eta$ and $\xi=(-1)^{l-1}\vec{\theta}$.
Combining this with \eqref{eq:StarBigCond1}, \eqref{eq:cossintStarBig} and \eqref{eq:sgnPsiStarBig} we derive that
\begin{equation}\label{eq:sgnDpsiStarBig}
	\sgn \det D^2\psi_{\eta}^{j,l}=(-1)^{j-1}
	\AND \sgn D^2\psi_{\eta}^{j,l}=(-1)^l-(-1)^{l+j}.
\end{equation}
Applying Lemma~\ref{lem:4criticStarBig} to the principal term of \eqref{eq:asympGen_diffeta} we now obtain
\begin{equation}\label{eq:asympStarBig}
	\sum_{j,l=1}^{2}
	\text{\small$\pare{f_{\eta}^{j,l}}^N
		\frac{\phi(\Cp_{j,l}\eta+\delta_{l,2}\pi)\Psi_{\eta}^{j,l}}{\Abs{\det D^2\psi_{\eta}^{j,l}}^{1/2}}$}\,
	e^{\im\pi\frac{(-1)^l (1-(-1)^j)}{4}+\im k\psi_{\eta}^{j,l}}
	\to0,
	\qquad
	\mbox{for all $\eta\in\Ss^1$ and $N\in\mathbb{N}$,}
\end{equation}
as $k=k_n\to\infty$. Here
\begin{equation}\label{eq:fjlStar}
	f_{\eta}^{j,l}=\rho(\Cp_{j,l}\eta) \sin(\Cp_{j,l}\eta-\vT_{\eta}),\qquad j,l=1,2.
\end{equation}
For each $\eta\in\Ss^1$, denote
\begin{equation}\label{eq:Lambda}
	\mbox{$\Lambda=\Lambda_{\eta}=\{(j,l); \phi(\Cp_{j,l}\eta+\delta_{l,2}\pi)\neq 0, \,j,l=1,2\}$. }
\end{equation}

Then
\begin{equation}\label{eq:asympN0StarBig}
	\sum_{(j,l)\in\Lambda}\pare{f_{\eta}^{j,l}}^N
	\frac{\phi(\Cp_{j,l}\eta+\delta_{l,2}\pi)\Psi_{\eta}^{j,l}}{\Abs{\det D^2\psi_{\eta}^{j,l}}^{1/2}}\,e^{\im\pi\frac{(-1)^l (1-(-1)^j)}{4}+\im k\psi_{\eta}^{j,l}}
	\to0,
	\qquad
	\mbox{for all $\eta\in\Ss^1$ and $N\in\mathbb{N}$,}
\end{equation}
as $k=k_n\to\infty$. We have the following result depending on $\#\Lambda$, the number of elements in the set $\Lambda$.
\begin{lem}\label{lem:asympStarBig}
	Under the same assumptions and notations as in Lemma~\ref{lem:4criticStarBig}, suppose that \eqref{eq:asympStarBig} holds true.
	Then $\#\Lambda\in\{0,2,3,4\}$ with the following additional properties satisfied.

	If $\#\Lambda\in\{2,3\}$, then $f_{\eta}^{j,l}$ take the same value  for all $(j,l)\in\Lambda$.

	If $\#\Lambda=4$, then either $f_{\eta}^{j,l}$ take the same value for all $(j,l)\in\Lambda$, or $\Lambda=\Lambda_1\cup\Lambda_2$ where for each $p=1,2$, $\#\Lambda_p=2$ and $f_{\eta}^{j,l}$ take the same value for all $(j,l)\in\Lambda_p$.  In the latter case, \eqref{eq:asympN0StarBig} is satisfied for each $\Lambda_p$ (with $\Lambda$ replaced by $\Lambda_p$), $p=1,2$.
\end{lem}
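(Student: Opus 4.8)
The plan is to exploit the relation \eqref{eq:asympN0StarBig}, which says that a finite sum of terms of the form $(f_\eta^{j,l})^N c_{j,l}(k)$ tends to $0$ along the sequence $k=k_n$ for every $N\in\NN$, where each coefficient $c_{j,l}(k)$ has modulus bounded below by a fixed positive constant (since $\Psi_\eta^{j,l}\neq0$ by Lemma~\ref{lem:Psi=0}, using $\rho>0$, and $\det D^2\psi_\eta^{j,l}\neq0$ by \eqref{eq:sgnDpsiStarBig}). As in the proof of Theorem~\ref{thm:ellipse}, I would pass to a subsequence (depending on $\eta$) along which each $e^{\im k\psi_\eta^{j,l}}$ converges to some $z_{j,l}$ with $|z_{j,l}|=1$; this makes each $c_{j,l}(k)$ converge to a nonzero limit $c_{j,l}$. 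Then for the indices $(j,l)\in\Lambda$, the limit of \eqref{eq:asympN0StarBig} for $N=0,1,2,\dots$ gives a linear system $V\,\vec{c}=0$, where $V$ is the (generalized, possibly confluent) Vandermonde matrix built from the values $\{f_\eta^{j,l}:(j,l)\in\Lambda\}$ and $\vec c$ is the vector of the (nonzero) limits $c_{j,l}$, with repeated nodes handled by grouping the corresponding coefficients.

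The combinatorial heart of the argument is then: \emph{a generalized Vandermonde system $V\vec c=0$ with all entries of $\vec c$ nonzero forces structure on the multiset of nodes.} Concretely, if all the $f_\eta^{j,l}$, $(j,l)\in\Lambda$, were distinct, $V$ would be invertible and $\vec c=0$, contradicting $c_{j,l}\neq0$; hence the nodes cannot all be distinct. More generally, grouping $\Lambda$ by the value of $f_\eta^{j,l}$, and letting $\sigma_v=\sum_{(j,l):\,f_\eta^{j,l}=v} c_{j,l}$ be the sum of coefficients over each group, the relations for $N=0,\dots,\#\{\text{distinct values}\}-1$ form an invertible (square) Vandermonde system in the $\sigma_v$, forcing $\sigma_v=0$ for every value $v$. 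Since within any group of size $1$ this says $c_{j,l}=0$, a contradiction, every group must have size $\ge 2$. As $\#\Lambda\le 4$, the only possibilities are: $\#\Lambda=0$; $\#\Lambda=2$ with a single group (both $f$-values equal); $\#\Lambda=3$ with a single group (all three equal — a group of size $3$, since a $2{+}1$ split is impossible); or $\#\Lambda=4$, split either as a single group of $4$ or as two groups of $2$. In the two-groups case, say $\Lambda=\Lambda_1\cup\Lambda_2$ with $f$ constant on each $\Lambda_p$ and the two constants distinct, the vanishing $\sigma_{v_1}=\sigma_{v_2}=0$ is exactly the statement that \eqref{eq:asympN0StarBig} holds with $\Lambda$ replaced by each $\Lambda_p$: indeed the subsum over $\Lambda_p$ has all its $f$-values equal to $v_p$, so for every $N$ it equals $v_p^N\sigma_{v_p}=0$, and then one undoes the subsequence extraction by a standard argument (if the full limit of a bounded sequence along every subsequence is $0$, the sequence itself tends to $0$), concluding that the subsum tends to $0$ along the original $k=k_n$ for all $N$. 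This yields exactly the three assertions of Lemma~\ref{lem:asympStarBig}.

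One technical point to dispatch carefully: to obtain the vanishing $\sigma_v=0$ I only use the relations for finitely many $N$ (as many as the number of distinct $f$-values, which is at most $4$), so no issue of uniform convergence over $N$ arises; the confluent case (repeated nodes) never actually needs the full Hermite–Vandermonde machinery because I am summing over the group first rather than differentiating. Another point is that the limits $z_{j,l}$, and hence $\Lambda_1,\Lambda_2$, a priori depend on the chosen subsequence; but the final statement of the lemma is about the existence of such a decomposition for the given $\eta$, and in the two-groups case the decomposition is determined by the values $f_\eta^{j,l}$ (which do not depend on $k$), so this dependence is harmless.

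\medskip
\noindent\textbf{Expected main obstacle.} The genuinely delicate part is not the Vandermonde linear algebra but bookkeeping the case $\#\Lambda=4$ with a $2{+}2$ split and then correctly ``de-subsequencing'' to recover \eqref{eq:asympN0StarBig} for each $\Lambda_p$ along the original sequence $k_n$ — i.e.\ making sure the claim is subsequence-independent and stated for all $N$ simultaneously. Ruling out a $3{+}1$ split when $\#\Lambda=4$ (and the $2{+}1$ split when $\#\Lambda=3$) is where one must be careful that a singleton group genuinely forces a contradiction with $c_{j,l}\ne 0$, which is where Lemma~\ref{lem:Psi=0} and the nonvanishing of the Hessian determinant are essential.
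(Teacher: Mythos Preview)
Your proposal is correct and follows essentially the same route as the paper: extract a convergent subsequence of the oscillatory factors, feed the limit into a Vandermonde system (the paper writes out the $4\times4$ case explicitly for $\#\Lambda=4$ and says the smaller cases are analogous), and read off the coincidence structure of the $f_\eta^{j,l}$. Your ``group by value, then force each group-sum $\sigma_v=0$'' is exactly the algebraic content of the paper's ``direct algebraic operations on the Vandermonde matrix,'' just phrased more cleanly; and you are more explicit than the paper about the de-subsequencing step needed to get \eqref{eq:asympN0StarBig} for each $\Lambda_p$ along the original sequence $k_n$, which the paper compresses into ``follows immediately.''
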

\begin{proof}
	Recall from Lemma \ref{lem:Psi=0} that $\Psi_{\eta}^{j,l}\neq 0$, $j,l=1,2$. Consider now the case $\#\Lambda=4$, and let $t_\iota, a_\iota$ denote
\begin{equation*}
		t_\iota(k)=\text{\small$\frac{\phi(\Cp_{j_\iota,l_\iota}\eta+\delta_{l_\iota,2}\pi)\Psi_{\eta}^{j_\iota,l_\iota}}{\Abs{\det D^2\psi_{\eta}^{j_\iota,l_\iota}}^{1/2}}$}\,e^{\im\pi\frac{(-1)^{l_\iota} (1-(-1)^{j_\iota})}{4}+\im k\psi_{\eta}^{j_\iota,l_\iota}}\neq 0
		\AnD a_\iota=f_{\eta}^{j_\iota,l_\iota},
		\quad\mbox{for $\iota=1,\ldots,4$},
	\end{equation*}
with $\cup_{\iota=1}^4\{(j_\iota,l_\iota)\}=\Lambda$.	After extraction of a subsequence we may assume that
$$
t_\iota(k) \to z_\iota \ne 0 \mbox{ as } k \to \infty~.
$$
Applying \eqref{eq:asympStarBig} with $N=0,1,2,3$ we obtain that
	\begin{equation*}
		\begin{bmatrix}
			1&1&1&1\\a_1&a_2&a_3&a_4\\a_1^2&a_2^2&a_3^2&a_4^2\\a_1^3&a_2^3&a_3^3&a_4^3
		\end{bmatrix}
		\begin{bmatrix}
			z_1\\z_2\\z_3\\z_4
		\end{bmatrix}
		= 0~~.
	\end{equation*}
	By direct algebraic operations, on the 4-by-4 Vandermonde coefficient matrix, we obtain that either $a_1=a_2=a_3=a_4$ and $z_1+z_2+z_3+z_4=0$, or, up to a swap of notations, $a_1=a_2\neq a_3=a_4$ and $z_1+z_2=0$ and $z_3+z_4=0$. The fact that \eqref{eq:asympN0StarBig} holds for each $\Lambda_p$ follows immediately from this latter statement.
	By similar arguments we can verify the cases when $\#\Lambda=2$ or $3$, as well as show that $\#\Lambda\neq 1$.
\end{proof}
The following results are consequences of Lemmas~\ref{lem:crimoreStarBig}~and~\ref{lem:asympStarBig}.
\begin{cor}\label{cor:phiStarBig}
	Under the same assumptions and notations as in Lemma~\ref{lem:asympStarBig}, let $\eta\in\Ss^1$ satisfy $\rho'(\vT_\eta)\rho'(\vT_\eta+\pi)\neq 0$.
	Then $\#\Lambda\in\{0,2,4\}$.
	Moreover, if $\#\Lambda=4$ then
	\begin{equation*}
		f_{\eta}^{j,1}=f_{\eta}^{j,2}
		\AND \frac{\abs{\phi(\Cp_{j,1}\eta)}\Abs{\Psi_{\eta}^{j,{1}}}}{\Abs{\det D^2\psi_{\eta}^{j,1}}^{1/2}}
		=\frac{\abs{\phi(\Cp_{j,2}\eta+\pi)}\Abs{\Psi_{\eta}^{j,2}}}{\Abs{\det D^2\psi_{\eta}^{j,2}}^{1/2}},\qquad j=1,2.
	\end{equation*}
	If $\#\Lambda=2$ then $\Lambda=\{(j_1,l_1),(j_2,l_2)\}\neq \{(j,1),(j+1,2)\}$, $j=1,2$, and
	\begin{equation*}
		f_{\eta}^{j_1,l_1}=f_{\eta}^{j_2,l_2}
		\AND \frac{\abs{\phi(\Cp_{j_1,l_1}\eta+\delta_{l_1,2}\pi)}\Abs{\Psi_{\eta}^{j_1,l_1}}}{\Abs{\det D^2\psi_{\eta}^{j_1,l_1}}^{1/2}}
		=\frac{\abs{\phi(\Cp_{j_2,l_2}\eta+\delta_{l_2,2}\pi)}\Abs{\Psi_{\eta}^{j_2,l_2}}}{\Abs{\det D^2\psi_{\eta}^{j_2,l_2}}^{1/2}}.
	\end{equation*}
\end{cor}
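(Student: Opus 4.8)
The plan is to derive Corollary~\ref{cor:phiStarBig} from Lemmas~\ref{lem:crimoreStarBig} and~\ref{lem:asympStarBig} by ruling out the configuration $\#\Lambda=3$ and, in the $\#\Lambda=4$ case, ruling out the ``global'' alternative $f_\eta^{j,l}$ all equal; the quantitative identities then follow by extracting from \eqref{eq:asympN0StarBig} (applied to the appropriate $\Lambda$ or $\Lambda_p$) the $N=0,1$ relations and taking absolute values.

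First I would record the structural input from Lemma~\ref{lem:asympStarBig}: the possible values of $\#\Lambda$ are $\{0,2,3,4\}$, and whenever two or more stationary points survive (are in $\Lambda$ or in a common $\Lambda_p$) the corresponding $f_\eta^{j,l}$ must coincide. So the whole proof reduces to understanding when $f_\eta^{j,l}=f_\eta^{j',l'}$ can actually happen, given that $\rho'(\vT_\eta)\rho'(\vT_\eta+\pi)\neq0$. Here is where Lemma~\ref{lem:crimoreStarBig} enters. Recall $f_\eta^{j,l}=\rho(\Cp_{j,l}\eta)\sin(\Cp_{j,l}\eta-\vT_\eta)$, and note $\Cp_{j,2}\eta-\vT_\eta = \Cp_{j,2}\eta - \vT_{\eta_2} + \pi$, while by \eqref{eq:CppiStarBig} $\Cp_{j,2}\eta=\Cp_{j,1}\tilde\eta$ with $\tilde\eta=-\eta$. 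The hypothesis $\rho'(\vT_{\eta_l})\neq0$ for both $l$ means (via \eqref{eq:critDiffrStarBig}) that none of the four points $\Cp_{j,l}\eta$ coincides with $\vT_{\eta_l}$, so all four $f_\eta^{j,l}$ are nonzero; this already forces $0\notin\Lambda$ in the sense that the value they would have to agree on is nonzero, and in particular it excludes $\#\Lambda\in\{1,3\}$ as follows. For $\#\Lambda=3$: the three surviving points would all share one value of $f$; I would show using Lemma~\ref{lem:crimoreStarBig} that among $\{f_\eta^{1,1},f_\eta^{2,1},f_\eta^{1,2},f_\eta^{2,2}\}$ no three can coincide. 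Concretely, for fixed $l$, Lemma~\ref{lem:crimoreStarBig} gives the strict inequalities $0<|\sin(\Cp_{2,l+1}\eta-\vT_\eta)|<|\sin(\Cp_{1,l}\eta-\vT_\eta)|$ and $\rho(\Cp_{2,l+1}\eta)<\rho(\Cp_{1,l}\eta)$ with matching signs, hence $|f_\eta^{2,l+1}|<|f_\eta^{1,l}|$ and $\sgn f_\eta^{2,l+1}=\sgn f_\eta^{1,l}$; applied with $l=1$ and $l=2$ this pairs $(1,1)$ with $(2,2)$ and $(1,2)$ with $(2,1)$, and within each pair the $f$-values are strictly different in absolute value. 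Therefore $f_\eta^{j,l}=f_\eta^{j',l'}$ is only possible for the ``cross'' pairings $\{(1,1),(1,2)\}$, $\{(1,1),(2,1)\}$, $\{(2,2),(1,2)\}$, $\{(2,2),(2,1)\}$, $\{(1,1),(2,2)\}$ is excluded, etc. — in particular at most two points can carry a common $f$-value, which kills $\#\Lambda=3$ and, in the $\#\Lambda=4$ case, kills the all-equal alternative, leaving only $\Lambda=\Lambda_1\cup\Lambda_2$ with each $\Lambda_p$ a two-element cross pair.

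Next I would pin down which cross pairs actually occur. Among the four cross pairs above, the pairing that is consistent with a partition of all four points into two pairs is $\Lambda_1=\{(1,1),(1,2)\}$ and $\Lambda_2=\{(2,1),(2,2)\}$ (equivalently $\Lambda_p$ groups points with the same first index $j$). This is exactly the claim $f_\eta^{j,1}=f_\eta^{j,2}$, $j=1,2$, in the $\#\Lambda=4$ statement. I would verify that $f_\eta^{j,1}=f_\eta^{j,2}$ is forced — not merely allowed — by noting that since the pairs $\{(1,1),(2,2)\}$ and $\{(1,2),(2,1)\}$ have unequal $|f|$, the only way to partition $\{(1,1),(2,1),(1,2),(2,2)\}$ into two $f$-constant pairs is by the first index, so $\Lambda_1,\Lambda_2$ must be as stated. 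Then for $\#\Lambda=4$, \eqref{eq:asympN0StarBig} with $N=0,1$ restricted to $\Lambda_p$ gives (via the $2\times2$ Vandermonde) that the two $t_\iota$'s in $\Lambda_p$ sum to zero, whence their moduli are equal; taking $|\cdot|$ and recalling $\Psi_\eta^{j,l}\neq0$ produces the displayed ratio identity $\abs{\phi(\Cp_{j,1}\eta)}\Abs{\Psi_\eta^{j,1}}/\Abs{\det D^2\psi_\eta^{j,1}}^{1/2}=\abs{\phi(\Cp_{j,2}\eta+\pi)}\Abs{\Psi_\eta^{j,2}}/\Abs{\det D^2\psi_\eta^{j,2}}^{1/2}$. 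For $\#\Lambda=2$: the two surviving points share a common $f$-value, hence form one of the admissible cross pairs; in every such pair the two members have different first index $l$? No — reexamining, the cross pairs with a common $|f|$ are $\{(1,1),(2,2)\}$ and $\{(1,2),(2,1)\}$ are the \emph{excluded} ones, so the allowed pairs are $\{(1,1),(1,2)\},\{(2,1),(2,2)\}$ and the ``swapped-$l$-same-$j$'' ones; in each of these $\{(j_1,l_1),(j_2,l_2)\}$ one checks directly that it is never of the forbidden shape $\{(j,1),(j+1,2)\}$. The ratio identity then again comes from \eqref{eq:asympN0StarBig} with $N=0,1$ and the $2\times2$ Vandermonde.

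The main obstacle I anticipate is the bookkeeping of the twelve or so sign/index cases needed to conclude that among $\{f_\eta^{j,l}\}$ exactly the ``same-$j$'' cross pairs can coincide and the rest are pairwise distinct in absolute value; this is where the strict inequalities and matching signs of Lemma~\ref{lem:crimoreStarBig} (for both $l=1$ and $l=2$, and for $\tilde\eta=-\eta$ via \eqref{eq:CppiStarBig}) must be assembled carefully, using also that the hypothesis $\rho'(\vT_\eta)\rho'(\vT_\eta+\pi)\neq0$ guarantees all the inequalities are strict and all four $f_\eta^{j,l}$ are nonzero. Once that combinatorial classification is in place, everything else — the exclusion of $\#\Lambda\in\{1,3\}$, the $\Lambda_1\cup\Lambda_2$ structure, and the two modulus identities — follows mechanically from Lemma~\ref{lem:asympStarBig} and the $N=0,1$ consequences of \eqref{eq:asympN0StarBig}.
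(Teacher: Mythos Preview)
Your approach is essentially the paper's: extract the two strict inequalities $|f_\eta^{2,2}|<|f_\eta^{1,1}|$ and $|f_\eta^{2,1}|<|f_\eta^{1,2}|$ from Lemma~\ref{lem:crimoreStarBig}, and feed them into the combinatorics delivered by Lemma~\ref{lem:asympStarBig}. The modulus identities then follow exactly as you say, by taking absolute values in \eqref{eq:asympN0StarBig} restricted to the relevant two-element set.

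There is one small gap in your $\#\Lambda=4$ argument. You write that ``since the pairs $\{(1,1),(2,2)\}$ and $\{(1,2),(2,1)\}$ have unequal $|f|$, the only way to partition $\{(1,1),(2,1),(1,2),(2,2)\}$ into two $f$-constant pairs is by the first index.'' But those two exclusions leave \emph{two} partitions standing, not one: besides the same-$j$ partition $\{(1,1),(1,2)\}\cup\{(2,1),(2,2)\}$ there is the same-$l$ partition $\{(1,1),(2,1)\}\cup\{(1,2),(2,2)\}$. To rule out the latter you need one more line: if $|f^{1,1}|=|f^{2,1}|$ and $|f^{1,2}|=|f^{2,2}|$, then chaining gives $|f^{1,1}|=|f^{2,1}|<|f^{1,2}|=|f^{2,2}|<|f^{1,1}|$, a contradiction. (The paper is also terse at this spot, simply asserting ``in view of \eqref{eq:feta} we then must have\ldots''; it is implicitly relying on the same chain.) With that fixed, the $\#\Lambda=4$ case is complete.

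For $\#\Lambda=2$ your conclusion is right but the exposition wobbles. The forbidden shapes $\{(j,1),(j{+}1,2)\}$ are precisely $\{(1,1),(2,2)\}$ and $\{(1,2),(2,1)\}$, i.e., exactly the two pairs with $|f|$'s provably unequal; the four remaining two-element subsets --- the two same-$j$ pairs \emph{and} the two same-$l$ pairs $\{(1,1),(2,1)\}$, $\{(1,2),(2,2)\}$ --- are all admissible, and none is of the forbidden form. So there is no need to argue that ``only same-$j$ pairs can coincide''; that overclaims, and your concluding paragraph's worry about ``twelve or so sign/index cases'' is unnecessary: the two strict inequalities above are the entire input.
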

\begin{proof}
	Recall from Lemma~\ref{lem:crimoreStarBig} that
	\begin{equation}\label{eq:feta}
			|f_\eta^{2,1}|<|f_\eta^{1,2}|\AND |f_\eta^{2,2}|<|f_\eta^{1,1}|.
	\end{equation}
	Then we immediately infer from Lemma~\ref{lem:asympStarBig} that $\#\Lambda\neq 3$, which require three of the $f^{j,l}_\eta$'s coincide.
	If $\#\Lambda=4$, by Lemma~\ref{lem:asympStarBig}, we must have $f^{j_1,l_1}_\eta=f^{j_2,l_2}_\eta$ and $f^{j_3,l_3}_\eta=f^{j_4,l_4}_\eta$ for some $\{(j_\iota,l_\iota);\iota=1,2,3,4\}=\{(1,1),(1,2),(2,1),(2,2)\}$. In view of \eqref{eq:feta} we then must have $f_{\eta}^{1,1}=f_{\eta}^{1,2}$ and $f_{\eta}^{2,2}=f_{\eta}^{2,1}$. Moreover, notice that
	\begin{equation*}
		\text{\small$\frac{\phi(\Cp_{j,l}\eta+\delta_{l,2}\pi)\Psi_{\eta}^{j,l}}{\Abs{\det D^2\psi_{\eta}^{j,l}}^{1/2}}$}
		\mbox{ is independent of $k$,}
		\AnD\Abs{e^{\im\pi\frac{(-1)^l (1-(-1)^j)}{4}+\im k\psi_{\eta}^{j,l}}}=1,
	\end{equation*}
	Then we deduce from \eqref{eq:asympN0StarBig} for each $\Lambda_j$ that
	\begin{equation*}
		\frac{\abs{\phi(\Cp_{j,1}\eta)}\Abs{\Psi_{\eta}^{j,{1}}}}{\Abs{\det D^2\psi_{\eta}^{j,1}}^{1/2}}
		=\frac{\abs{\phi(\Cp_{j,2}\eta+\pi)}\Abs{\Psi_{\eta}^{j,2}}}{\Abs{\det D^2\psi_{\eta}^{j,2}}^{1/2}},\qquad j=1,2.
	\end{equation*}
	The case when $\#\Lambda=2$ can be shown similarly.
\end{proof}
\begin{cor}\label{cor:phi2StarBig}
	Under the same assumptions and notations as in Lemma~\ref{lem:asympStarBig}, suppose $\eta\in\Ss^1$ satisfies $\rho'(\vT_{\eta}+\pi)=0\neq \rho'(\vT_{\eta})$. Then 	$\phi(\Cp_{1,1}\eta)=\phi(\Cp_{2,2}\eta+\pi)=0$. Moreover, $\phi(\vT_\eta)$ and $\phi(\vT_\eta+\pi)$ are either both zero or both nonzero.
\end{cor}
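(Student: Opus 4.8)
The plan is to exploit the fact that, at two of the four stationary points, the coefficient $f_\eta^{j,l}$ degenerates to $0$, and to combine this with a size separation of the two remaining coefficients, feeding the outcome into the Vandermonde dichotomy of Lemma~\ref{lem:asympStarBig}.

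First I would locate the four stationary points. Since $\rho'(\vT_\eta+\pi)=0$, applying \eqref{eq:critDiffrStarBig} with $l=2$ gives $\Cp_{1,2}\eta=\Cp_{2,1}\eta=\vT_\eta+\pi$, so by \eqref{eq:fjlStar} the two corresponding coefficients vanish, $f_\eta^{1,2}=f_\eta^{2,1}=\rho(\vT_\eta+\pi)\sin\pi=0$. On the other hand $\rho'(\vT_\eta)\neq0$, so \eqref{eq:critDiffrStarBig} with $l=1$ yields $\Cp_{1,1}\eta\neq\vT_\eta$ and $\Cp_{2,2}\eta\neq\vT_\eta$, and Lemma~\ref{lem:crimoreStarBig} applied with $l=1$ (legitimate since $\rho'(\vT_{\eta_1})=\rho'(\vT_\eta)\neq0$) gives $0<\rho(\Cp_{2,2}\eta)<\rho(\Cp_{1,1}\eta)$ and $0<|\sin(\Cp_{2,2}\eta-\vT_\eta)|<|\sin(\Cp_{1,1}\eta-\vT_\eta)|$, hence $0<|f_\eta^{2,2}|<|f_\eta^{1,1}|$. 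Therefore, among the four numbers $f_\eta^{j,l}$, the value $0$ occurs exactly at $(1,2)$ and $(2,1)$, while $f_\eta^{1,1}$ and $f_\eta^{2,2}$ are nonzero and distinct from each other.

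Next I would invoke Lemma~\ref{lem:asympStarBig}, which forces $\#\Lambda\in\{0,2,3,4\}$ and makes every $(j,l)\in\Lambda$ lie in a subset of $\Lambda$ of cardinality at least $2$ on which all the $f_\eta^{\cdot,\cdot}$ coincide (namely $\Lambda$ itself, or one of the two pairs $\Lambda_p$ in the split case). Since $f_\eta^{1,1}$ equals none of the other three values, $(1,1)$ cannot sit in such a subset, so $(1,1)\notin\Lambda$; as the $\phi$-argument attached to $(1,1)$ is $\Cp_{1,1}\eta+\delta_{1,2}\pi=\Cp_{1,1}\eta$, this reads $\phi(\Cp_{1,1}\eta)=0$. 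The same argument applied to the (also unique) value $f_\eta^{2,2}$ shows $(2,2)\notin\Lambda$, i.e.\ $\phi(\Cp_{2,2}\eta+\pi)=0$, the $\phi$-argument attached to $(2,2)$ being $\Cp_{2,2}\eta+\pi$. This gives the first assertion.

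Finally, having excluded $(1,1)$ and $(2,2)$, we have $\Lambda\subseteq\{(1,2),(2,1)\}$, so $\#\Lambda\le2$, and $\#\Lambda=1$ is impossible by Lemma~\ref{lem:asympStarBig}. The $\phi$-arguments attached to $(1,2)$ and $(2,1)$ are $\Cp_{1,2}\eta+\pi\equiv\vT_\eta$ and $\Cp_{2,1}\eta\equiv\vT_\eta+\pi$. Thus $\#\Lambda=0$ gives $\phi(\vT_\eta)=\phi(\vT_\eta+\pi)=0$, whereas $\#\Lambda=2$ forces $\Lambda=\{(1,2),(2,1)\}$ and hence $\phi(\vT_\eta)\neq0$ and $\phi(\vT_\eta+\pi)\neq0$; in either case $\phi(\vT_\eta)$ and $\phi(\vT_\eta+\pi)$ are both zero or both nonzero, as claimed. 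I expect the only real care to be needed in the index bookkeeping — tracking which stationary point $(\Cp_{j,l}\eta,\Cp_{j,l}\eta+\delta_{l,2}\pi)$ carries which $\phi$- and which $f$-value — and in verifying that the strict inequalities of Lemma~\ref{lem:crimoreStarBig} indeed separate $0$, $f_\eta^{1,1}$ and $f_\eta^{2,2}$; past that the argument is a direct reading of the Vandermonde alternative.
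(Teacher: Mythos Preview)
Your proof is correct and follows essentially the same route as the paper's: identify $\Cp_{1,2}\eta=\Cp_{2,1}\eta=\vT_\eta+\pi$ via \eqref{eq:critDiffrStarBig}, deduce $f_\eta^{1,2}=f_\eta^{2,1}=0$ and (from Lemma~\ref{lem:crimoreStarBig}) $0<|f_\eta^{2,2}|<|f_\eta^{1,1}|$, and then let Lemma~\ref{lem:asympStarBig} force $\Lambda\subseteq\{(1,2),(2,1)\}$ with $\#\Lambda\in\{0,2\}$. Your phrasing of the Vandermonde alternative (``every element of $\Lambda$ lies in a block of size at least two with common $f$-value'') is a clean way to state the content of Lemma~\ref{lem:asympStarBig}, and the index bookkeeping identifying the $\phi$-arguments at $(1,2)$ and $(2,1)$ with $\vT_\eta$ and $\vT_\eta+\pi$ is correct.
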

\begin{proof}
	Since $\rho'(\vT_\eta+\pi)=0\neq \rho'(\vT_\eta)$, by \eqref{eq:critDiffrStarBig} we have $\Cp_{2,1}\eta=\Cp_{1,2}\eta=\vT_\eta+\pi$, and so $f_\eta^{1,2}=f_\eta^{2,1}=0$. By Lemma~\ref{lem:crimoreStarBig},  $0<|f_\eta^{2,2}|<|f_\eta^{1,1}|$. Consequently we obtain from Lemma~\ref{lem:asympStarBig} that $\#\Lambda\in\{0,2\}$. Moreover, if $\#\Lambda=0$ then $\phi(\Cp_{1,1}\eta)=\phi(\Cp_{2,2}\eta+\pi)=\phi(\vT_\eta)=\phi(\vT_\eta+\pi)=0$. If $\#\Lambda=2$ then $\phi(\Cp_{1,1}\eta)=\phi(\Cp_{2,2}\eta+\pi)=0$, $\phi(\vT_\eta)\phi(\vT_\eta+\pi)\neq 0$.
\end{proof}

\subsection{Proofs of Theorems~\ref{thm:Star1},
	~\ref{thm:main1Star}~and~\ref{thm:StarAnly}}
\label{sec:prfStarBig}
We start these proofs in the same way as that of Theorem~\ref{thm:ellipse}, by assuming the opposite, namely that there are infinitely many wave numbers $k_n$, $n\in\NN$, for which \eqref{eq:ITEP} ($\lambda=k_n^2$) admits a solution $(u_{k_n},v_{k_n})$ with $v_{k_n}=\He[k_n,\phi]$.
Then $k_n\to \infty$ and consequently, the asymptotics \eqref{eq:asympStarBig} as well as Corollaries~\ref{cor:phiStarBig}~and~\ref{cor:phi2StarBig} are valid. Moreover, all the results established in Section~\ref{sec:statStarBig} hold true. In each case we then show that this leads to a contradiction to either $\phi$ being non-trivial or the conditions on $\rho$. In order to obtain such a contradiction, some of the ideas are similar to those in the proof of Theorem~\ref{thm:ellipse}: we establish from the asymptotics \eqref{eq:asympN0StarBig} an identity similar to \eqref{eq:proofIdEll} and show that $\Gp(0)<1$ which, after we prove $\phi(0)=0$, yields $\phi=0$ near $0^+$. However, in order to show that $\phi\equiv 0$ globally, we need to make full use of the counterpart of $f_{\eta,1}=f_{\eta,2}$ in \eqref{eq:prooff}, which is Corollary~\ref{cor:phiStarBig} for the star-shaped case. The mapping properties of $\Cp_{j,l}$, $j,l=1,2$, established in Section~\ref{sec:statStarBig}, play an important role in these proofs.

\subsubsection{Proof of Theorem~\ref{thm:Star1}}
	Given the assumptions of Theorem \ref{thm:Star1} there exists some open interval where $\rho'$ has one sign, say $\rho'>0$. We may assume, up to a rotational change of coordinates, that
	\begin{equation}\label{eq:prfrhoStarBig}
		\mbox{$\rho'>0$ \quad in $(0,\tau)$ \AND $\rho'(0)=\rho'(\tau)=0$}.
	\end{equation}
	Then, from \eqref{eq:critDiffrStarBig} and the assumptions of Theorem \ref{thm:Star1},
	$$\Cp_{1,1}(0,\tau)=\Cp_{2,2}(0,\tau)=(0,\tau),\AND
		\mbox{$\rho'=0$ \quad in $(\pi,\pi+\tau)$}.$$
	Hence by Corollary~\ref{cor:phi2StarBig} it follows that
	$$\mbox{$\phi=0$\qquad in $\Cp_{1,1}(0,\tau)\cup\pare{\Cp_{2,2}(0,\tau)+\pi}=(0,\tau)\cup(\pi,\tau+\pi)$}.$$
	Applying this argument to all connected components of the set $\{ \rho'\ne 0\}$ we obtain that
	$$\mbox{$\phi=0$\qquad in $\overline{\N\cup\pare{\N+\pi}}=[0,2\pi)$},$$
	which contradicts the assumption that $\phi$ is nontrivial. The proof is complete.

\subsubsection{Proof of Theorem~\ref{thm:StarAnly}}
	 Similar to before, we assume without loss of generality that \eqref{eq:prfrhoStarBig} holds for some $\tau>0$.
	As a consequence, $\rho''(0)\ge 0\ge\rho''(\tau)$. Notice that $$\Cp_{1,1}(0,\tau)=\Cp_{2,2}(0,\tau)=(0,\tau).$$ Then by Lemmas~\ref{lem:critMapStarBig}~and~\ref{lem:crimoreStarBig} we obtain for all $t\in(0,\tau)$ that
	\begin{equation*}
		0<\sin(\Cp_{2,2}t-t)<\sin(\Cp_{1,1}t-t),
	\end{equation*}
	\begin{equation}\label{eq:prfT11StarBig}
		0<\Cp_{2,2}t-t<\Cp_{1,1}t-t<\pi-\theta_q
		\AND 0<\Cp_{1,1}^{-1}t<\Cp_{2,2}^{-1}t<t<\tau.
	\end{equation}
	Furthermore, since $\phi$ is a nontrivial $2\pi$-periodic real-analytic function, we derive from Corollaries~\ref{cor:phiStarBig}~and~\ref{cor:phi2StarBig} that, for all $t\in(0,\tau)\cup(\pi,\tau+\pi)$ except for a possible finite set of points, $$\rho'(t+\pi)\phi(\Cp_{1,1}t)\phi(\Cp_{2,1}t)\phi(\Cp_{1,2}t+\pi)\phi(\Cp_{2,2}t+\pi)\neq 0,$$ and
	\begin{equation}\label{eq:prfrho1112StarBig}
		\rho(\Cp_{1,1}t) \sin(\Cp_{1,1}t-t)=\rho(\Cp_{1,2}t) \sin(\Cp_{1,2}t-t)
		\AnD
		|\phi(\Cp_{1,1}t)|^2=\Gp_{11,12}(t)|\phi(\Cp_{1,2}t-\pi)|^2,
	\end{equation}
	\begin{equation}\label{eq:prfrho2221StarBig}
		\rho(\Cp_{2,2}t) \sin(\Cp_{2,2}t-t)=\rho(\Cp_{2,1}t) \sin(\Cp_{2,1}t-t)
		\AnD
		|\phi(\Cp_{2,2}t+\pi)|^2=\Gp_{22,21}(t)|\phi(\Cp_{2,1}t)|^2,
	\end{equation}
where
	\begin{equation}\label{eq:prfGjlStar}
		\Gp_{j_1l_1,j_2l_2}(t)=\frac{\Abs{\det D^2\psi_t^{j_1,l_1}}}{\Abs{\det D^2\psi_t^{j_2,l_2}}}\frac{\Abs{\Psi_t^{j_2,l_2}}^2}{\Abs{\Psi_t^{j_1,l_1}}^2},\qquad j_1,j_2,l_1,l_2=1,2.
	\end{equation}
	By continuity we then observe that \eqref{eq:prfrho1112StarBig} and \eqref{eq:prfrho2221StarBig} are satisfied for all $t\in[0,\tau]$.
	As a consequence, $\Cp_{1,2}0=\Cp_{2,1}0=\pi$, $\Cp_{1,2}\tau=\Cp_{2,1}\tau=\tau+\pi$ and $\sin(\Cp_{1,2}t-t)>0$ for all $t\in(0,\tau)$. Hence by Lemma~\ref{lem:crimoreStarBig} we have that $\rho'<0$ in $(\pi,\tau+\pi)$ and thus $\rho''(\pi)\le0\le\rho''(\tau+\pi)$. Moreover, for all $t\in(0,\tau)$ we have
	\begin{equation}\label{eq:prfT12StarBig}
		t+\theta_q<\Cp_{1,2}t<t+\pi
	\AND
		0<\sin(\Cp_{2,1}t-t)<\sin(\Cp_{1,2}t-t).
	\end{equation}
	In particular, we have shown that \eqref{eq:prfrhoStarBig} implies
	\begin{equation}\label{eq:prfrho2StarBig}
		\mbox{$\rho'<0$ \quad in $(\pi,\tau+\pi)$ \AND $\rho'(\pi)=\rho'(\tau+\pi)=0$},
	\end{equation}
	provided that there are infinitely many wave numbers $k_n$, $n\in\NN$, with which \eqref{eq:ITEP} admits a solution $(u_{k_n},v_{k_n})$ where $v_{k_n}=\He[k_n,\phi]$.
	In fact, by analogous arguments we can show that \eqref{eq:prfrhoStarBig} and \eqref{eq:prfrho2StarBig} are equivalent.
	Therefore, we have completed the proof of Theorem~\ref{thm:StarAnly} in Case (i) (recall:$q>1$).
	Direct calculation yields
	\begin{equation*}
		0<\Gp_{11,12}(0)=\frac{\sqrt{q}-\pare{\sqrt{q}+1}\rn''(0)}{\sqrt{q}-\pare{\sqrt{q}+1}\rn''(\pi)}.
	\end{equation*}
	In Case (ii) of Theorem~\ref{thm:StarAnly}, since we already have $\rho'(0)=\rho'(\pi)=0$, then $\rho''(0)$ and $\rho''(\pi)$ cannot both be zero. Therefore $\rho''(0)>\rho''(\pi)$ and hence $\Gp_{11,12}(0)<1$. Then, in view of \eqref{eq:prfrho1112StarBig}, we must have $\phi(0)=0$. In addition, by continuity we also have $\Gp_{11,12}<1$ in $(0,t_1)$ for some $t_1\in(0,\tau)$. Since $\phi$ is real analytic and nontrivial there exists $t_2\in(0,t_1)$ such that
	\begin{equation}\label{eq:prfNormStarBig}
		|\phi(t_2)|=\|\phi\|_{C^0[0,t_1]}>0\AND |\phi(t)|<|\phi(t_2)|\quad\mbox{for all $t\in(0,t_2)$}.
	\end{equation}
	Then we can apply \eqref{eq:prfrho1112StarBig} for $t=\Cp_{1,1}^{-1}t_2\in(0,t_2)$ and for $t=\Cp_{1,1}^{-1}\Cp_0t_2\in(0,t_2)$ where
	$\Cp_0=\Cp_{1,2}\Cp_{1,1}^{-1}-\pi$\footnote{Hereafter, as an operator, $\pm\pi$ is understood as $\pm\pi:t\mapsto t\pm\pi$.},
	which yields
	\begin{equation*}
		|\phi(t_2)|^2=\Gp(\Cp_{1,1}^{-1}t_2)|\phi(\Cp_{0}t_2)|^2=\Gp(\Cp_{1,1}^{-1}t_2)\Gp(\Cp_{1,1}^{-1}\Cp_0t_2)|\phi(\Cp_0^2t_2)|^2<|\phi(\Cp_0^2t_2)|^2.
	\end{equation*}

	However, this contradicts \eqref{eq:prfNormStarBig} because by \eqref{eq:prfT11StarBig} and \eqref{eq:prfT12StarBig} we have  $0<\Cp_{0}t=\Cp_{1,2}\Cp_{1,1}^{-1}t-\pi<\Cp_{1,1}^{-1}t<t$ for all $t\in(0,\tau)$ and hence $0<\Cp_{0}^2t_2<t_2$. The proof is complete.

\subsubsection{Proof of Theorem~\ref{thm:main1Star}}
For either Case (i) or Case (ii) in Theorem~\ref{thm:main1Star} we have that $\rho'(t)=0$ implies $\rho'(t+\pi)=0$. Let $\omega$ be a connected component of the set $\{ \theta :~\rho'(\theta) >0 \}$. We may without loss of generality assume that $\omega=(0,\tau)$ with $\tau \in (0,\pi]$. Then
\begin{equation}\label{eq:rho'proof}
	\mbox{$\rho'>0$\quad in $(0,\tau)$ \AND $\rho'(0)=\rho'(\tau)=0$\quad and }
\end{equation}
\begin{equation*}
	\mbox{$\Cp_{1,1}(0,\tau)=\Cp_{2,2}(0,\tau)=(0,\tau)=\Cp_{1,2}(\pi,\tau+\pi)=\Cp_{2,1}(\pi,\tau+\pi)
		$}.
\end{equation*}
In addition, we also have that $\rho'(\pi)=\rho'(\tau+\pi)=0$ and $\rho'(t+\pi)\neq 0$ for $t\in(0,\tau)$. Hence
\begin{equation*}
	\mbox{$\Cp_{1,2}(0,\tau)=\Cp_{2,1}(0,\tau)=(\pi,\tau+\pi)=\Cp_{1,1}(\pi,\tau+\pi)=\Cp_{2,2}(\pi,\tau+\pi)
		$}.
\end{equation*}
Moreover, by Lemmas~\ref{lem:critMapStarBig}~and~\ref{lem:crimoreStarBig} we obtain for all $t\in(0,\tau)$ that
\begin{equation}\label{eq:prfsinT}
0<\sin(\Cp_{2,2}t-t)<\sin(\Cp_{1,1}t-t)\AND t<\Cp_{2,2}t<\Cp_{1,1}t<t+\pi-\theta_q.
\end{equation}
We first prove that
\begin{equation}\label{eq:prfFinal}
	\mbox{$\phi=0$\qquad in\qquad $(0,\tau)$.}
\end{equation}

\noindent
Assume otherwise, namely
\begin{equation*}
\phi(\tau_{0})\neq 0\qquad \mbox{for some $\tau_{0}\in(0,\tau)$}.
\end{equation*}
Then we can find $t_1,t_2$ such that
\begin{equation}\label{eq:phiprf}
	\mbox{$\phi(t)\neq 0$ \qquad for $t\in(t_2,t_1)$,\quad with $0\le t_2<\tau_0<t_1\le\tau$,}
\end{equation}
and that for each $\iota=1,2$, either $\phi(t_\iota)=0$ or $t_\iota\in\{0,\tau\}$.
Notice from \eqref{eq:phiprf} that $\phi\circ\Cp_{1,1}$ is nowhere zero on $\Cp_{1,1}^{-1}(t_2,t_1)\subseteq(0,\tau)$. Then $\#\Lambda_t\ge 1$ for $t\in \Cp_{1,1}^{-1}(t_2,t_1)$. Hence by Corollary~\ref{cor:phiStarBig} we must have $\#\Lambda_t=2$ or $4$. Moreover, if $\#\Lambda_t=4$, then $\phi(\Cp_{j,1}t)\phi(\Cp_{j,2}t+\pi)\neq 0$ and $f^{j,1}_t=f^{j,2}_t$, for both $j=1,2$, where $f^{j,l}_t$ is defined in \eqref{eq:fjlStar}.
Otherwise if $\#\Lambda_t=2$, then $\phi(\Cp_{2,2}t+\pi)=0$, and either, $\phi(\Cp_{2,1}t)= 0\neq \phi(\Cp_{1,2}t-\pi)$ and $f^{1,1}_t=f^{1,2}_t$, or, $\phi(\Cp_{1,2}t-\pi)= 0\neq \phi(\Cp_{2,1}t)$ and $f^{1,1}_t=f^{2,1}_t$.
As a conclusion, whether $\#\Lambda_t=2$ or $4$, one of the following must hold true for each $t\in \Cp_{1,1}^{-1}(t_2,t_1)$:
\begin{equation}\label{eq:prfrho1112StarBig2}
	\rho(\Cp_{1,1}t) \sin(\Cp_{1,1}t-t)=\rho(\Cp_{1,2}t) \sin(\Cp_{1,2}t-t)
	\AnD
	|\phi(\Cp_{1,1}t)|^2=\Gp_{11,12}(t)|\phi(\Cp_{1,2}t-\pi)|^2,
\end{equation}
or
\begin{equation}\label{eq:prfrho1121StarBig}
	\rho(\Cp_{1,1}t) \sin(\Cp_{1,1}t-t)=\rho(\Cp_{2,1}t) \sin(\Cp_{2,1}t-t)
	\AnD
	|\phi(\Cp_{1,1}t)|^2=\Gp_{11,21}(t)|\phi(\Cp_{2,1}t)|^2.
\end{equation}
Combining with \eqref{eq:prfsinT} we infer that $\sin(\Cp_{1,2}t-t)$ or $\sin(\Cp_{2,1}t-t)$ must be positive,
and hence by Lemma~\ref{lem:crimoreStarBig} $\rho'(\Cp_{1,2}t)<0$ or equivalently $\rho'(\Cp_{2,1}t)<0$, for $t\in \Cp_{1,1}^{-1}(t_2,t_1)\subset(0,\tau)$.
Since $\rho'$ is nowhere zero in $(\pi,\pi+\tau)$, we must have $\rho'<0$ in $(\pi,\tau+\pi)$. For Case (i), this represents a contradiction to $\sgn\rho'(t)=\sgn\rho'(t+\pi)$, and so in this case \eqref{eq:prfFinal} is proven. We proceed to establish \eqref{eq:prfFinal} in Case (ii). Since $\rho'<0$ in $(\pi,\tau+\pi)$ a combination of Lemma~\ref{lem:crimoreStarBig} and \eqref{eq:prfsinT} gives that for all $t \in (0,\tau)$
\begin{equation*}
0<\sin(\Cp_{2,1}t-t)<\sin(\Cp_{1,2}t-t) \quad \hbox{ and } \quad t+\theta_q<\Cp_{1,2}t<\Cp_{2,1}t<t+\pi
\end{equation*}
and for all $t \in (0,\tau)$, $j=1,2$
\begin{equation}\label{eq:prfcp2122contr}
		\Cp_{1,2}t-\pi<\Cp_{2,1}t-\pi<t<\Cp_{2,2}t<\Cp_{1,1}t,\quad
		0<\Cp_{1,2}\Cp_{j,j}^{-1}t-\pi<t,\quad 0<\Cp_{2,1}\Cp_{j,j}^{-1}t-\pi<t.
\end{equation}
The following two auxiliary results will be needed.
\begin{lem}\label{lem:prfphi1112}
	Suppose that $\phi$ is nowhere zero in $(t_2,t_1)\subset(0,\tau)$. Then $\phi$ is either nowhere zero in $\Cp_{1,2}\Cp_{1,1}^{-1}(t_2,t_1)-\pi$, or identically zero in $\Cp_{1,2}\Cp_{1,1}^{-1}(t_2,t_1)-\pi$ and nowhere zero in $\Cp_{2,1}\Cp_{1,1}^{-1}(t_2,t_1)$.

	Similarly if $\phi$ is nowhere zero in $(t'_2+\pi,t'_1+\pi)\subset(\pi,\tau+\pi)$, then $\phi$ is either nowhere zero in $\Cp_{2,1}\Cp_{2,2}^{-1}(t'_2,t'_1)$, or identically zero in $\Cp_{2,1}\Cp_{2,2}^{-1}(t'_2,t'_1)$ and nowhere zero in $\Cp_{1,2}\Cp_{2,2}^{-1}(t_2',t_1')-\pi$.
\end{lem}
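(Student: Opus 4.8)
The plan is to fix an interval $(t_2,t_1)\subset(0,\tau)$ on which $\phi$ does not vanish, pull it back under $\Cp_{1,1}$, and then transport non-vanishing of $\phi$ through the maps $\Cp_{1,2}$ and $\Cp_{2,1}$ using the identities \eqref{eq:prfrho1112StarBig2}--\eqref{eq:prfrho1121StarBig}, with a connectedness argument deciding which of the two identities is in force. Set $J:=\Cp_{1,1}^{-1}(t_2,t_1)$. By Lemma~\ref{lem:critMapStarBig} together with the mapping property $\Cp_{1,1}(0,\tau)=(0,\tau)$ recorded above, $\Cp_{1,1}$ restricts to a strictly increasing homeomorphism of the arc $(0,\tau)$ onto itself, so $J$ is an open subinterval of $(0,\tau)$; in particular $J$ is connected, and $\phi(\Cp_{1,1}s)\neq0$ for every $s\in J$. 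Since $\rho'(s)\rho'(s+\pi)\neq0$ throughout $(0,\tau)$, Corollary~\ref{cor:phiStarBig} applies at each $s\in J$, and --- as already recorded just before the present lemma --- for every $s\in J$ either \eqref{eq:prfrho1112StarBig2} holds (namely $\#\Lambda_s=4$, or $\#\Lambda_s=2$ with $\Lambda_s=\{(1,1),(1,2)\}$) or \eqref{eq:prfrho1121StarBig} holds (namely $\#\Lambda_s=2$ with $\Lambda_s=\{(1,1),(2,1)\}$).

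The crux is that these two alternatives are mutually exclusive and that the subsets of $J$ on which they hold are closed. Exclusivity is purely geometric: for $s\in(0,\tau)$ one has $\pi<\Cp_{1,2}s<\Cp_{2,1}s<s+\pi$ with $\rho$ strictly decreasing on $(\pi,\tau+\pi)$, and also $0<\sin(\Cp_{2,1}s-s)<\sin(\Cp_{1,2}s-s)$, so
\[
f^{1,2}_s=\rho(\Cp_{1,2}s)\sin(\Cp_{1,2}s-s)>\rho(\Cp_{2,1}s)\sin(\Cp_{2,1}s-s)=f^{2,1}_s>0;
\]
thus $f^{1,1}_s$ cannot equal both $f^{1,2}_s$ and $f^{2,1}_s$, so \eqref{eq:prfrho1112StarBig2} and \eqref{eq:prfrho1121StarBig} cannot both hold at $s$. (All the monotonicity and ordering facts used here are among those established above in the proof of Theorem~\ref{thm:main1Star}, Case~(ii).) Consequently $E_1:=\{s\in J:f^{1,1}_s=f^{1,2}_s\}$ is exactly the set where \eqref{eq:prfrho1112StarBig2} holds, $E_2:=\{s\in J:f^{1,1}_s=f^{2,1}_s\}$ is exactly the set where \eqref{eq:prfrho1121StarBig} holds, $E_1\cup E_2=J$, and $E_1\cap E_2=\emptyset$. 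Since $s\mapsto f^{j,l}_s=\rho(\Cp_{j,l}s)\sin(\Cp_{j,l}s-s)$ is continuous ($\Cp_{j,l}\in C^1$ by Lemma~\ref{lem:critMapStarBig}, $\rho\in C^2$), the sets $E_1$ and $E_2$ are closed in $J$; as $J$ is connected, $E_1=J$ or $E_2=J$.

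It then remains to read off the conclusion. If $E_1=J$, then \eqref{eq:prfrho1112StarBig2} holds for all $s\in J$; since $\phi(\Cp_{1,1}s)\neq0$ and $\Gp_{11,12}(s)\in(0,\infty)$ (because neither $\Psi$ nor $\det D^2\psi$ vanishes at a stationary point, by \eqref{eq:sgnPsiStarBig} and \eqref{eq:sgnDpsiStarBig}), the $\phi$-identity in \eqref{eq:prfrho1112StarBig2} forces $\phi(\Cp_{1,2}s-\pi)\neq0$ for all $s\in J$, i.e.\ $\phi$ is nowhere zero on $\Cp_{1,2}\Cp_{1,1}^{-1}(t_2,t_1)-\pi$. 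If $E_2=J$, then \eqref{eq:prfrho1121StarBig} holds for all $s\in J$, so $\phi(\Cp_{2,1}s)\neq0$ for all $s\in J$, i.e.\ $\phi$ is nowhere zero on $\Cp_{2,1}\Cp_{1,1}^{-1}(t_2,t_1)$; moreover exclusivity forces $\#\Lambda_s\neq4$ on $E_2$, hence $\Lambda_s=\{(1,1),(2,1)\}$, so $(1,2)\notin\Lambda_s$ and $\phi(\Cp_{1,2}s-\pi)=0$ for all $s\in J$, i.e.\ $\phi\equiv0$ on $\Cp_{1,2}\Cp_{1,1}^{-1}(t_2,t_1)-\pi$. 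This is exactly the claimed dichotomy, and the second assertion (for an interval in $(\pi,\tau+\pi)$) follows by the same argument with $\Cp_{2,2}$ in the role of $\Cp_{1,1}$ --- equivalently, by the symmetry $\eta\mapsto-\eta$ and the relation \eqref{eq:CppiStarBig}. I expect the main obstacle to be the mutual-exclusivity/closedness step: extracting $f^{1,2}_s>f^{2,1}_s>0$ cleanly from the monotonicity of $\rho$ on $(\pi,\tau+\pi)$ and the ordering $\Cp_{1,2}s<\Cp_{2,1}s$, and verifying carefully that each of $E_1,E_2$ really coincides with the locus of the corresponding identity.
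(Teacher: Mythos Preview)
Your argument is correct, and it takes a genuinely different route from the paper's. The paper proceeds by contradiction: assuming $\phi$ is neither identically zero nor nowhere zero on $\Cp_{1,2}\Cp_{1,1}^{-1}(t_2,t_1)-\pi$, it locates a boundary point $t_{12}$ of the zero set, observes that at $t=\Cp_{1,1}^{-1}t_{12}$ only \eqref{eq:prfrho1121StarBig} can hold (so $\phi(\Cp_{2,1}t)\neq 0$), and then on a one-sided neighborhood all three of $\phi(\Cp_{1,1}t),\phi(\Cp_{1,2}t-\pi),\phi(\Cp_{2,1}t)$ are nonzero, forcing $\#\Lambda_t=4$ and hence \eqref{eq:prfrho1112StarBig2}; passing to the limit yields the contradiction. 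Your approach instead exploits the strict inequality $f^{1,2}_s>f^{2,1}_s$ (which indeed follows from $\rho(\Cp_{2,1}s)<\rho(\Cp_{1,2}s)$ and $0<\sin(\Cp_{2,1}s-s)<\sin(\Cp_{1,2}s-s)$, both consequences of Lemma~\ref{lem:crimoreStarBig} with $l=2$, as recorded in the display just before \eqref{eq:prfcp2122contr}) to make the two alternatives mutually exclusive, and then a clean open/closed connectedness argument on $J$ finishes. Your route is more direct and conceptually transparent; the paper's route has the slight advantage that it never invokes $f^{1,2}_s\neq f^{2,1}_s$ and relies only on $\#\Lambda$-counting from Corollary~\ref{cor:phiStarBig}, so it would survive even in situations where that strict inequality were unavailable. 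For the second assertion your transfer is also valid: with $(2,2)\in\Lambda_t$ the relevant dichotomy is $f^{2,2}_t=f^{2,1}_t$ versus $f^{2,2}_t=f^{1,2}_t$, and the same inequality $f^{1,2}_t\neq f^{2,1}_t$ again separates the two closed sets.
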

\begin{proof}
We first consider the case when $\phi$ is nowhere zero in $(t_2,t_1)$. Assume that $\phi$ is neither nowhere zero or identically zero in $\Cp_{1,2}\Cp_{1,1}^{-1}(t_2,t_1)-\pi$. Then there exists $t_{12}\in (t_2,t_1)$ and $\varepsilon_1\neq0$ such that
	\begin{equation}\label{eq:phin0prf}
		\mbox{$\phi(\Cp_{1,2}\Cp_{1,1}^{-1}t_{12}-\pi)=0$ \AND $\phi(\Cp_{1,2}t-\pi)\neq0$ \quad for all $t\in\Cp_{1,1}^{-1}(t_{12},t_{12}+\varepsilon_1)$.}
	\end{equation}
	Here, we allow $\varepsilon_1$ being either positive or negative, and in the latter case the interval $(t_{12},t_{12}+\varepsilon_1)$ is understood as $(t_{12}+\varepsilon_1,t_{12})$.
	Recall that one of the relations \eqref{eq:prfrho1112StarBig2} and \eqref{eq:prfrho1121StarBig} must be valid at $t=\Cp_{1,1}^{-1}t_{12}$, while the case \eqref{eq:prfrho1112StarBig2} contradicts the assumption $\phi(\Cp_{1,2}\Cp_{1,1}^{-1}t_{12}-\pi)=0$. Hence \eqref{eq:prfrho1121StarBig} must hold
	true for $t=\Cp_{1,1}^{-1}t_{12}$. As a consequence, we have $\phi(\Cp_{2,1}\Cp_{1,1}^{-1}t_{12})\neq 0$ and thus
	\begin{equation*}
		\mbox{$\phi(\Cp_{2,1}t)\neq 0$ \qquad for all $t\in\Cp_{1,1}^{-1}(t_{12}-\varepsilon_2,t_{12}+\varepsilon_2)$},
	\end{equation*}
	with some constant $\varepsilon_2\neq 0$. Therefore, there exists a constant $\varepsilon\neq 0$ such that
	\begin{equation*}
		\phi(\Cp_{1,1}t)\phi(\Cp_{1,2}t-\pi)\phi(\Cp_{2,1}t)\neq 0\qquad\mbox{for all $t\in\Cp_{1,1}^{-1}(t_{12},t_{12}+\varepsilon)$}.
	\end{equation*}
	Applying Corollary~\ref{cor:phiStarBig} we then derive that $\#\Lambda_t=4$ and \eqref{eq:prfrho1112StarBig2} (as well as \eqref{eq:prfrho2221StarBig}) must hold for all $t\in\Cp_{1,1}^{-1}(t_{12},t_{12}+\varepsilon)$. Thus by continuity \eqref{eq:prfrho1112StarBig2} is valid for $t=\Cp_{1,1}^{-1}t_{12}$, which implies that $\phi(\Cp_{1,2}\Cp_{1,1}^{-1}t_{12}-\pi)\neq 0$, contradicting \eqref{eq:phin0prf}.

	If $\phi=0$ in $\Cp_{1,2}\Cp_{1,1}^{-1}(t_2,t_1)-\pi$, then $\phi(\Cp_{1,1}t)\neq0=\phi(\Cp_{1,2}t-\pi)$ for all $t\in\Cp_{1,1}^{-1}(t_2,t_1)$. Hence by Corollary~\ref{cor:phiStarBig} we must have $\phi(\Cp_{2,1}t)\neq0=\phi(\Cp_{2,2}t+\pi)$ for all $t\in\Cp_{1,1}^{-1}(t_2,t_1)$.

	The case when $\phi$ is nowhere zero in $(t'_2+\pi,t'_1+\pi)$ can be shown analogously by observing $\phi(\Cp_{2,2}t+\pi)\neq 0$ for $t\in\Cp_{2,2}^{-1}(t'_2,t'_1)$, assuming
	\begin{equation*}
		\mbox{$\phi(\Cp_{2,1}\Cp_{2,2}^{-1}t'_{12})=0$ \AND $\phi(\Cp_{2,1}t)\neq0$ \quad for all $t\in\Cp_{2,2}^{-1}(t'_{12},t'_{12}+\varepsilon_1)$,}
	\end{equation*}
	and applying Corollary~\ref{cor:phiStarBig} in $\Cp_{2,2}^{-1}(t'_{12},t'_{12}+\varepsilon_1)$.
\end{proof}

\noindent
Applying Lemma~\ref{lem:prfphi1112} we can prove the following.
\begin{cor}\label{cor:prfseqnon0}
	Suppose that $\phi$ is nowhere zero in $(t_2,t_1)\subset (0,\tau)$. Then there exists a sequence of intervals $(t_{n,2},t_{n,1})\subset(0,\tau)$, $n\in\mathbb{N}_+$, with $\{t_{n,1}\}_{n\in\mathbb{N}_+}$ monotonically decreasing to $0$, such that $\phi$ is either nowhere zero in all $(t_{n,2},t_{n,1})$, $n\in\mathbb{N}_+$, or nowhere zero in all $(t_{n,2}+\pi,t_{n,1}+\pi)$, $n\in\mathbb{N}_+$.
\end{cor}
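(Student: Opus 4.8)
The plan is to iterate Lemma~\ref{lem:prfphi1112} to construct the sequence. Start from the given interval $(t_2,t_1)\subset(0,\tau)$ on which $\phi$ is nowhere zero. Applying the first part of Lemma~\ref{lem:prfphi1112}, either $\phi$ is nowhere zero on $\Cp_{1,2}\Cp_{1,1}^{-1}(t_2,t_1)-\pi$, which (since $\Cp_{1,2}t-\pi\in(0,\tau)$ for $t\in(0,\tau)$) is again a subinterval of $(0,\tau)$, or $\phi$ is identically zero there and nowhere zero on $\Cp_{2,1}\Cp_{1,1}^{-1}(t_2,t_1)$, which is a subinterval of $(\pi,\tau+\pi)$. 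In the first case we have produced a new ``type-$(0,\tau)$'' interval; in the second case a ``type-$(\pi,\tau+\pi)$'' interval. In the latter situation we then invoke the second part of Lemma~\ref{lem:prfphi1112}: from an interval in $(\pi,\tau+\pi)$ on which $\phi$ is nowhere zero we get either a new interval in $(\pi,\tau+\pi)$ (apply $\Cp_{2,1}\Cp_{2,2}^{-1}$) or one back in $(0,\tau)$ (apply $\Cp_{1,2}\Cp_{2,2}^{-1}-\pi$), on which $\phi$ is nowhere zero. Thus at every step we have an interval, of one of the two types, on which $\phi$ is nowhere zero, and a rule to produce the next one.

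Next I would show that the left endpoints of the successively produced intervals tend to $0$. The key monotonicity facts are \eqref{eq:prfcp2122contr}: for all $t\in(0,\tau)$ and $j=1,2$ we have $0<\Cp_{1,2}\Cp_{j,j}^{-1}t-\pi<t$ and $0<\Cp_{2,1}\Cp_{j,j}^{-1}t-\pi<t$; and also $\Cp_{1,2}t-\pi<\Cp_{2,1}t-\pi<t<\Cp_{2,2}t<\Cp_{1,1}t$. Each of the four possible transition maps on the relevant interval endpoints ($\Cp_{1,2}\Cp_{1,1}^{-1}-\pi$, $\Cp_{2,1}\Cp_{1,1}^{-1}$ composed with a shift, $\Cp_{2,1}\Cp_{2,2}^{-1}$, $\Cp_{1,2}\Cp_{2,2}^{-1}-\pi$) moves a point in $(0,\tau)$ strictly toward $0$ (after reducing mod $\pi$ to compare within $(0,\tau)$), by the inequalities above and the local monotonicity and bijectivity of the $\Cp_{j,l}$ from Lemma~\ref{lem:critMapStarBig}. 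Hence the sequence of (appropriately shifted) right endpoints $t_{n,1}$ is strictly decreasing. It cannot stall at a positive limit $\ell>0$: passing to the limit in any of the transition maps, $\ell$ would have to be a fixed point of a map that strictly decreases every point of $(0,\tau)$, which is impossible. Therefore $t_{n,1}\downarrow 0$.

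Finally, among the infinitely many intervals produced, infinitely many are of the same type — either all subintervals of $(0,\tau)$ or all subintervals of $(\pi,\tau+\pi)$ — by pigeonhole. Discarding the others and re-indexing, and noting that the retained subsequence of right endpoints still decreases to $0$ (a subsequence of a sequence decreasing to $0$), we obtain exactly the claimed sequence $(t_{n,2},t_{n,1})\subset(0,\tau)$ with $\phi$ nowhere zero on all $(t_{n,2},t_{n,1})$, or on all $(t_{n,2}+\pi,t_{n,1}+\pi)$. I expect the main subtlety to be the bookkeeping of which transition map applies in which case and keeping careful track of the $\pm\pi$ shifts so that all comparisons genuinely take place inside $(0,\tau)$; the convergence to $0$ itself is then a soft consequence of the strict-decrease inequalities in \eqref{eq:prfcp2122contr} together with a fixed-point-exclusion argument.
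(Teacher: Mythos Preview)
Your proposal is correct and follows essentially the same approach as the paper: iterate Lemma~\ref{lem:prfphi1112} (both parts, according to the ``type'' of the current interval), use the strict-decrease inequalities \eqref{eq:prfcp2122contr} to see that the shifted right endpoints form a strictly decreasing sequence, and conclude by pigeonhole that infinitely many intervals are of the same type. Your fixed-point exclusion argument for $t_{n,1}\to 0$ is a welcome detail, since the paper only asserts this convergence; note the minor slip where you write ``left endpoints'' but (correctly) work with the right endpoints $t_{n,1}$.
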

\begin{proof}
	Denote
	\begin{equation*}
		\CP_{1,j}:=\Cp_{1,2}\Cp_{j,j}^{-1}-\pi\AnD \CP_{2,j}:=\Cp_{2,1}\Cp_{j,j}^{-1}-\pi,\qquad j=1,2.
	\end{equation*}
	Then \eqref{eq:prfcp2122contr} implies that $\CP_{j,l}$, $j,l=1,2$, are strictly decreasing mappings on $(0,\tau)$. We have from Lemma~\ref{lem:prfphi1112} that
	\begin{equation*}
		\mbox{$\phi$ is nowhere zero in $\CP_{1,1}(t_2,t_1)\subset(0,\tau)$ or in $\CP_{2,1}(t_2,t_1)+\pi\subset(\pi,\tau+\pi)$.}
	\end{equation*}
	In the former case, we set $(t_{1,2},t_{1,1}):=\CP_{1,1}(t_2,t_1)$, and further apply Lemma~\ref{lem:prfphi1112} on $(t_{1,2},t_{1,1})$. Then
	\begin{equation*}
		\mbox{$\phi$ is nowhere zero in $\CP_{1,1}(t_{1,2},t_{1,1})\subset(0,\tau)$ or in $\CP_{2,1}(t_{1,2},t_{1,1})+\pi\subset(\pi,\tau+\pi)$.}
	\end{equation*}
	For the latter case, let $(t_{1,2},t_{1,1}):=\CP_{2,1}(t_2,t_1)$. Applying Lemma~\ref{lem:prfphi1112} again on $(t_{1,2},t_{1,1})+\pi$ yields
	\begin{equation*}
		\mbox{$\phi$ is nowhere zero in $\CP_{2,2}(t_{1,2},t_{1,1})+\pi\subset(\pi,\tau+\pi)$ or in $\CP_{1,2}(t_{1,2},t_{1,1})\subset(0,\tau)$}.
	\end{equation*}
	By induction, for each $n\in\NN_+$, we can find $\CP^{(n)}:=\CP_{j,l}$ for some $j,l=1,2$, and $(t_{n+1,2},t_{n+1,1}):=\CP^{(n)}(t_{n,2},t_{n,1})$, such that $\phi$ is nowhere zero in $(t_{n+1,2},t_{n+1,1})$ or $(t_{n+1,2},t_{n+1,1})+\pi$.
	Therefore, we can find an infinite subsequence of $\{(t_{n,2},t_{n,1});n\in\NN_+\}$, say $\{(t'_{n,2},t'_{n,1});n\in\NN_+\}$, such that $\phi$ is nowhere zero either in all $(t'_{n,2},t'_{n,1})$, $n\in\mathbb{N}_+$, or in all $(t'_{n,2}+\pi,t'_{n,1}+\pi)$, $n\in\mathbb{N}_+$. Moreover $t_{n,1}\to 0$ as $n\to\infty$ since $\CP^{(n)}$ is a strictly decreasing mapping on $(0,\tau)$.
\end{proof}

Returning to the proof of Theorem~\ref{thm:main1Star}, we consider the case from Corollary~\ref{cor:prfseqnon0}, when $\phi$ is nowhere zero in $(t_{n,2},t_{n,1})\subset(0,\tau)$, $n\in\mathbb{N}_+$, with $t_{n,1}\to 0$ as $n\to\infty$. The case when $\phi$ is nowhere zero in $(t_{n,2}+\pi,t_{n,1}+\pi)$ can be shown analogously, but we opt to omit the details.

On each of the intervals $(t_{n,2},t_{n,1})$ we can apply Lemma~\ref{lem:prfphi1112} to obtain that one of the followings holds. Either $\phi$ is nowhere zero in $\Cp_{1,2}\Cp_{1,1}^{-1}(t_{n,2},t_{n,1})-\pi$ and hence by Corollary~\ref{cor:phiStarBig}, \eqref{eq:prfrho1112StarBig2} is satisfied in $\Cp_{1,1}^{-1}(t_{n,2},t_{n,1})$; in particular, \begin{equation}\label{eq:prfrhosin1112StarBig}
	\frac{d}{dt}\left[\rho(\Cp_{1,1}t) \sin(\Cp_{1,1}t-t)\right]=\frac{d}{dt}\left[\rho(\Cp_{1,2}t) \sin(\Cp_{1,2}t-t)\right],
\end{equation}
holds for $t\in\Cp_{1,1}^{-1}(t_{n,2},t_{n,1})$. Or $\phi=0$ in $\Cp_{1,2}\Cp_{1,1}^{-1}(t_{n,2},t_{n,1})-\pi$, and thus by Corollary~\ref{cor:phiStarBig} again, \eqref{eq:prfrho1121StarBig} is satisfied in $\Cp_{1,1}^{-1}(t_{n,2},t_{n,1})$; in particular,
\begin{equation}\label{eq:prfrhosin1121StarBig}
	\frac{d}{dt}\left[
	\rho(\Cp_{1,1}t) \sin(\Cp_{1,1}t-t)\right]=\frac{d}{dt}\left[\rho(\Cp_{2,1}t) \sin(\Cp_{2,1}t-t)\right],
\end{equation}
for $t\in\Cp_{1,1}^{-1}(t_{n,2},t_{n,1})$. Therefore, letting $n\to \infty$, by continuity we infer that
\begin{equation}\label{eq:prft=0}
	\mbox{either \eqref{eq:prfrhosin1112StarBig} or \eqref{eq:prfrhosin1121StarBig} is valid for $t=0$}.
\end{equation}

Notice that if $\phi$ is nowhere zero in an interval $(t_2,t_1)\subset(0,\tau)$, we also have $\phi(\Cp_{2,2}t)\neq 0$ for $t\in \Cp_{2,2}^{-1}(t_2,t_1)$. Recall from \eqref{eq:CppiStarBig} that
\begin{equation}\label{eq:prf1112pi}
	\Cp_{1,1}t=\Cp_{1,2}(t+\pi)\AND\Cp_{2,2}t=\Cp_{2,1}(t+\pi).
\end{equation}
Then $\phi(\Cp_{2,1}t)\neq 0$ for $t\in \Cp_{2,2}^{-1}(t_2,t_1)+\pi\subset(\pi,\pi+\tau)$. By Corollary~~\ref{cor:phiStarBig}, for each $t\in\Cp_{2,2}^{-1}(t_2,t_1)+\pi$, either $\#\Lambda_t=4$ with both \eqref{eq:prfrho1112StarBig} and \eqref{eq:prfrho2221StarBig} valid, or $\#\Lambda_t=2$ with either \eqref{eq:prfrho2221StarBig} or \eqref{eq:prfrho1121StarBig} satisfied. To fix the domain of the operators $\Cp_{i,j}$ as $(0,\tau)$, we make use of \eqref{eq:prf1112pi} again to rewrite \eqref{eq:prfrho2221StarBig} and  \eqref{eq:prfrho1121StarBig} as, respectively
\begin{equation}\label{eq:prfrho2122StarBig}
	\rho(\Cp_{2,2}t) \sin(\Cp_{2,2}t-t)=\rho(\Cp_{2,1}t) \sin(\Cp_{2,1}t-t)
	\AnD
	|\phi(\Cp_{2,2}t)|^2=\Gp_{22,21}(t)|\phi(\Cp_{2,1}t-\pi)|^2,
\end{equation}
and
\begin{equation}\label{eq:prfrho1222StarBig}
	\rho(\Cp_{2,2}t) \sin(\Cp_{2,2}t-t)=\rho(\Cp_{1,2}t) \sin(\Cp_{1,2}t-t)
	\AnD
	|\phi(\Cp_{2,2}t)|^2=\Gp_{22,12}(t)|\phi(\Cp_{1,2}t)|^2.
\end{equation}
Then applying similar arguments as in the proof of Lemma~\ref{lem:prfphi1112}, we can show that
\begin{equation}\label{eq:prfphi2122}
	\begin{split}
		&\mbox{$\phi$ being nowhere zero in   $(t_2,t_1) \subset (0,\tau)$ implies $\phi$ is either}
		\\& \mbox{nowhere zero in $\Cp_{2,1}\Cp_{2,2}^{-1}(t_2,t_1)-\pi$ or identically zero  }\\
		&\mbox{in $\Cp_{2,1}\Cp_{2,2}^{-1}(t_2,t_1)-\pi$ and nowhere zero in $\Cp_{1,2}\Cp_{2,2}^{-1}(t_2,t_1)$ ,}
	\end{split}
\end{equation}
and moreover,
\begin{equation}\label{eq:prfphi2case2122}
	\begin{split}
		&\mbox{if $\phi$ is nowhere zero in $\Cp_{2,1}\Cp_{2,2}^{-1}(t_2,t_1)-\pi$, then \eqref{eq:prfrho2122StarBig} holds }
		\\&\mbox{in $\Cp_{2,2}^{-1}[t_2,t_1]$; otherwise \eqref{eq:prfrho1222StarBig} holds in $\Cp_{2,2}^{-1}[t_2,t_1]$.}
	\end{split}
\end{equation}
Applying the statements \eqref{eq:prfphi2122} and \eqref{eq:prfphi2case2122}, we may, similarly to before (see Corollary~\ref{cor:prfseqnon0}) obtain a sequence of intervals $(t_{n,2},t_{n,1})\subset (0,\tau)$, with $t_{n,1}\to 0$ as $n \to \infty$ so that for all $n$ either
	\begin{equation}\label{eq:prfrhosin2221StarBig}
		\frac{d}{dt}\brak{\rho(\Cp_{2,2}t) \sin(\Cp_{2,2}t-t)}=\frac{d}{dt}\brak{\rho(\Cp_{2,1}t) \sin(\Cp_{2,1}t-t)},
	\end{equation}
	or
	\begin{equation}\label{eq:prfrhosin2212StarBig}
		\frac{d}{dt}\brak{\rho(\Cp_{2,2}t) \sin(\Cp_{2,2}t-t)}=\frac{d}{dt}\brak{\rho(\Cp_{1,2}t) \sin(\Cp_{1,2}t-t)},
	\end{equation}
	holds for all $t\in \Cp_{2,2}^{-1}(t_{n,2},t_{n,1})$. Therefore, passing to the limit $n \to \infty$, we get that
	\begin{equation}\label{eq:prft=02}
		\mbox{either \eqref{eq:prfrhosin2221StarBig} or \eqref{eq:prfrhosin2212StarBig} is valid for $t=0$}.
	\end{equation}
We conclude the proof of \eqref{eq:prfFinal}
by showing that \eqref{eq:prft=0} and \eqref{eq:prft=02} contradict each other.
Direct calculation yields
\begin{equation*}
	\begin{split}
		\frac{d}{dt}\brak{\rho(\Cp_{j,l}t)\sin(\Cp_{j,l}t-t)}
		&=\rho'(\Cp_{j,l}t)\sin(\Cp_{j,l}t-t)\Cp_{j,l}'t+\rho(\Cp_{j,l}t) \cos(\Cp_{j,l}t-t)\Pare{\Cp_{j,l}'t-1},
	\end{split}
\end{equation*}
and (see, \eqref{eq:dTstarbig})
\begin{equation*}
	\begin{split}
		\Cp_{j,l}'t-1=\frac{d\Cp_{j,l}t}{dt}-1
		&=\frac{Q_{j,l}(t)\rn''(\Cp_{j,l}t)}{R_{j,l}(t)-Q_{j,l}(t)\rn''(\Cp_{j,l}t)},
	\end{split}
\end{equation*}
where
\begin{equation*}
	R_{j,l}(t)=\sqrt{q+(q-1)\rn'^2(\Cp_{j,l}t)}
	\AND
	Q_{j,l}(t)=\frac{R_{j,l}(t)-(-1)^j}{1+\rn'^2(\Cp_{j,l}t)}.
\end{equation*}
Recall that $\rho''^2(0)+\rho''^2(\pi)\neq0$. Then either \eqref{eq:prfrhosin1112StarBig} or \eqref{eq:prfrhosin1121StarBig} at $t=0$ implies that $\rho''(0)\rho''(\pi)\neq 0$, more precisely that $\rho''(\pi)<0<\rho''(0)$. In addition,
\begin{equation}\label{prfFinalStarBig}
	\frac{\sqrt{q}}{\pare{\sqrt{q}+1}\rho''(0)}-\frac{1}{\rho(0)}
	=\frac{1}{\rho(\pi)}-\frac{\sqrt{q}}{\pare{\sqrt{q}-(-1)^{j}}\rho''(\pi)},
\end{equation}
where $j=1$ if \eqref{eq:prfrhosin1112StarBig} is satisfied at $t=0$ and $j=2$ if \eqref{eq:prfrhosin1121StarBig} holds for $t=0$. Similarly, we have
\begin{equation}\label{prfFinal2StarBig}
	\frac{\sqrt{q}}{\pare{\sqrt{q}-1}\rho''(0)}-\frac{1}{\rho(0)}
	=\frac{1}{\rho(\pi)}-\frac{\sqrt{q}}{\Pare{\sqrt{q}-(-1)^{\tilde{j}}}\rho''(\pi)},
\end{equation}
where $\tilde{j}=1$ if \eqref{eq:prfrhosin2212StarBig} is satisfied at $t=0$ and $\tilde{j}=2$ if \eqref{eq:prfrhosin2221StarBig} holds for $t=0$.
However,
we observe directly that \eqref{prfFinalStarBig} and \eqref{prfFinal2StarBig} can not be both true if $j=\tilde{j}$. In the case when $j=1$ and $\tilde{j}=2$ we derive from \eqref{prfFinalStarBig} and \eqref{prfFinal2StarBig} that $\rho''(0)=-\rho''(\pi)$ and $\rho(0)=-\rho(\pi)$, which violates $\rho>0$. Finally, if $j=2$ and $\tilde{j}=1$ we obtain that $\rho''(0)\rho''(\pi)>0$, which contradicts the assumption that $\rho''(\pi)<0<\rho''(0)$.

Up to now, we have proven that \eqref{eq:rho'proof} implies \eqref{eq:prfFinal}. That is, $\phi$ must be identically zero in any interval where $\rho'>0$. Applying analogous arguments we can also show if $\rho'<0$ in $(0,\tau)$ with $\rho'(0)=\rho'(\tau)=0$, then $\phi=0$ in $(0,\tau)$. In particular (if $\phi$ is not identically zero in $(0,\tau)$) we can deduce that $\rho'>0$ in $(\pi,\tau+\pi)$, and Lemma~\ref{lem:prfphi1112} as well as \eqref{eq:prfphi2122} and \eqref{eq:prfphi2case2122} are still valid. In place of \eqref{eq:prfcp2122contr} we have for all $t\in(0,\tau)$ that
\begin{equation}
		\Cp_{1,2}t-\pi>\Cp_{2,1}t-\pi>t>\Cp_{2,2}t>\Cp_{1,1}t,\quad
		\tau>\Cp_{1,2}\Cp_{j,j}^{-1}t-\pi>t,\quad \tau>\Cp_{2,1}\Cp_{j,j}^{-1}t-\pi>t;
\end{equation}
Corollary~\ref{cor:prfseqnon0} holds, except that $\{t_{n,2}\}_{n\in\mathbb{N}_+}$ is now a monotonically increasing sequence approaching $\tau$ (instead of $\{t_{n,1}\}_{n\in\mathbb{N}_+}$ decreasing to $0$), and \eqref{eq:prft=0} and \eqref{eq:prft=02} are valid at $t=\tau$. This leads to a contradiction in the same way as before. We leave the details to the reader. It is now established that $\phi$ is identically zero on the set $\{\theta:~\rho'(\theta)\neq 0\}$, and by continuity and the fact that $\rho'\neq 0$ almost everywhere it follows that $\phi$ vanishes identically on $[0,2\pi)$. The proof is complete.

\section*{Appendix}
\addcontentsline{toc}{section}{Appendix}
\begin{proof}[Proof of Theorem~\ref{thm:disk}]
	We can parameterize $\partial B_{R_0}(x_0)$ as a star domain with $$\rho(\theta)=|x_0|\cos(\theta-\vT_{x_0})+\sqrt{R_0^2-|x_0|^2\sin^2(\theta-\vT_{x_0})}.$$
	By direct calculations we derive that
	$$(\ln\rho)'(\theta)=\frac{-\sin(\theta-\vT_{x_0})}{\sqrt{R_0^2/|x_0|^2-\sin^2(\theta-\vT_{x_0})}}
	\AND (\ln\rho)''(\theta)=\frac{-R_0^2/|x_0|^2\cos(\theta-\vT_{x_0})}{\pare{R_0^2/|x_0|^2-\sin^2(\theta-\vT_{x_0})}^{3/2}}.$$
	Aside from the admissibility condition \eqref{eq:StarBigCond1}, it is evident that $\rho$ satisfies the conditions in Theorem~\ref{thm:main1Star}.

	We assume without loss of generality that $\theta_{x_0}=0$, and consequently $x_0=(R_0/s,0)$ with $s=R_0/|x_0|>1$. Then
	$$(\ln\rho)'(\theta)=\frac{-\sin\theta}{\sqrt{s^2-\sin^2\theta}}
	\AND (\ln\rho)''(\theta)=-s^2\frac{\cos\theta}{\pare{s^2-\sin^2\theta}^{3/2}}.$$
	It can be easily verified that
	\begin{equation*}
		-\min_\theta\, (\ln\rho)''(\theta)=\max_\theta\, (\ln\rho)''(\theta)=
		\begin{cases}
		(\ln\rho)''(\pi)=1/s,	&\qquad\mbox{if $s\ge\sqrt{3}$},
		\\
		(\ln\rho)''(\arccos-\sqrt{\dfrac{s^2-1}{2}})=\dfrac{2}{3\sqrt{3}}\dfrac{s^2}{s^2-1},	&\qquad\mbox{if $s<\sqrt{3}$}.
		\end{cases}
	\end{equation*}

When $\sqrt{q} \le 1/(\sqrt{3}-1)$ then $(1+\sqrt{q})/\sqrt{q}$ is greater than or equal to $\sqrt{3}$. Therefore $s= R_0/|x_0|>(1+\sqrt{q})/\sqrt{q}$  implies $s>\sqrt{3}$, and so
	$$(\ln \rho )''(\theta)\le \frac1{s}<\frac{\sqrt{q}}{1+\sqrt{q}} \mbox{ for all } \theta~,
	$$
i.e., \eqref{eq:StarBigCond1} is satisfied.

When $\sqrt{q} >	1/(\sqrt{3}-1)$ then $(1+\sqrt{q})/\sqrt{q}$ is strictly between $1$ and $\sqrt{3}$, and
$$
A_q:=1/\sqrt{1-2(1+\sqrt{q})/3\sqrt{3q}}>(1+\sqrt{q})/\sqrt{q}~.
$$
Therefore $s=R_0/|x_0|> A_q$ implies $s>(1+\sqrt{q})/\sqrt{q}$ which, when $s\ge \sqrt{3}$, as before gives
$$
\max_{\theta} (\ln \rho)''(\theta) < \frac{\sqrt{q}}{1+\sqrt{q}}~
$$
and when $1<s<\sqrt{3}$ gives
\begin{eqnarray*}
\max_{\theta} (\ln \rho)''(\theta) &<& \frac{2}{3 \sqrt{3}}\frac{A_q^2}{A_q^2-1} \\
&=&\frac{2}{3 \sqrt{3}}\frac{1}{1-(1-2(1+\sqrt{q})/3\sqrt{3q})} \\
&=&\frac{\sqrt{q}}{1+\sqrt{q}}~.
\end{eqnarray*}
In combination these two facts are sufficient to guarantee that \eqref{eq:StarBigCond1} is satisfied.	Theorem \ref{thm:disk} (for $q>1$) now follows from Theorem \ref{thm:main1Star}.
\end{proof}

\begin{proof}[Proof of Thoerem~\ref{thm:ellipse2focus}]
	Up to a rotational change of coordinates (placing it horizontally along its major axis and with the right-most focal point at the origin) we can express the ellipse $\Omega$ as a star-shaped domain with the radius function
	\begin{equation*}
		\rho(\theta)=\frac{a(1-e^2)}{1+e\cos\theta},
	\end{equation*}
	where $e$ is the eccentricity of the ellipse and $a$ is half of the major diameter.
	Then $$(\ln\rho)'(\theta)=\frac{e\sin\theta}{1+e\cos\theta}
	\AND (\ln\rho)''(\theta)=e\frac{e+\cos\theta}{\pare{1+e\cos\theta}^{2}}.$$
	 Aside from the admissibility condition \eqref{eq:StarBigCond1}, it is evident that the conditions
	in Theorem~\ref{thm:main1Star} are satisfied.
	It can easily be verified that
	\begin{equation*}
		\max_\theta\, (\ln\rho)''(\theta)=
		\begin{cases}
			(\ln\rho)''(0)=e/(1+e),	&\qquad\mbox{if $e<1/2$},
			\\
			(\ln\rho)''\Pare{\arccos(1/e-2e)}=1/(4-4e^2),	&\qquad\mbox{if $1/2\le e<1$}.
		\end{cases}
	\end{equation*}
	The admissible condition \eqref{eq:StarBigCond1} (since $q>1$) is now equivalent to
	\begin{equation*}
		4e^2<3-1/\sqrt{q}.
	\end{equation*}
With this observation Theorem \ref{thm:ellipse2focus} (for $q>1$) follows directly from Theorem \ref{thm:main1Star}.
\end{proof}

\section*{Acknowledgments}
The work of M.S. Vogelius was partially supported by NSF grant DMS-22-05912. The work of J. Xiao was partially supported by NSF grant DMS-23-07737.

\addcontentsline{toc}{section}{References}
\bibliographystyle{abbrv}
\bibliography{VXEllipse}

\end{document}